\numberwithin{equation}{section}
\theoremstyle{definition}
\newtheorem{dfn}{Definition}[section]
\newtheorem{thm}[dfn]{Theorem}
\newtheorem{lem}[dfn]{Lemma}
\newtheorem{prop}[dfn]{Proposition}
\newtheorem{cor}[dfn]{Corollary}
\newtheorem{rem}[dfn]{Remark}
\newtheorem{ex}[dfn]{Example}
\title[HMS for WPS and Morse Homotopy]{Homological mirror symmetry for weighted projective spaces and Morse Homotopy}
\date{\today}
\author{Azuna Nishida}
\address{Department of Mathematics and Informatics, Graduate School of Science and Engineering, Chiba University, Yayoicho 1-33, Inage, Chiba, 263-8522 Japan}
\email{anishida@g.math.s.chiba-u.ac.jp}
\begin{document}

\begin{abstract}
Kontsevich and Soibelman discussed homological mirror symmetry 
by using the SYZ torus fibrations, where they introduced the weighted version of
 Fukaya-Oh's Morse homotopy on the base space of
the dual torus fibration in the intermediate step. 
Futaki and Kajiura applied Kontsevich-Soibelman's approach to the case when a complex manifold $X$ is a smooth compact toric
manifold. 
There, they introduced the category of weighted Morse homotopy on the moment polytope of toric manifolds, 
and compared this category to the derived category of coherent sheaves on $X$ instead of the Fukaya category.
In this paper, we extend their setting to the case of toric orbifolds, and discuss this version of homological mirror symmetry for weighted projective spaces.

\end{abstract}

\maketitle
\tableofcontents

\section{Introduction}
In \cite{kontsevich1995homological}, Kontsevich proposed a categorical formulation of mirror symmetry for Calabi-Yau manifolds.   
Since then, beyond the original setting of mirror pairs of Calabi-Yau manifolds, Kontsevich's homological mirror symmetry (HMS) conjecture has been studied for a larger class of mirror pairs with some adjustments in setting. 
For Fano manifolds and their Landau-Ginzburg mirrors, HMS is originally discussed as an equivalence of triangulated categories between the bounded derived category of coherent sheaves on a toric variety and the derived Fukaya-Seidel category of the Lefschetz fibration defined by the Landau-Ginzburg potential.  
   In this version of HMS, many works have been done, for example, for some (toric) Fano varieties or stacks \cite{seidel2001vanishing, seidel2001more,ueda2006homological,auroux2006mirror,auroux2008mirror}.
   Notably, Auroux-Katzarkov-Orlov proved HMS for weighted projective planes (and their noncommutative deformations) in \cite{auroux2008mirror}. 
   Different versions of HMS are discussed, for example, by Abouzaid \cite{abouzaid2009morse}, by Fang\cite{fang2008homological}, by Fang-Liu-Treumann-Zaslow \cite{fang2014coherent} and by Kuwagaki \cite{kuwagaki2020nonequivariant}. 
   
   Recently, Futaki-Kajiura \cite{futaki2021homological} proposed a way of 
   understanding homological mirror symmetry for smooth compact toric manifolds from the Strominger-Yau-Zaslow's (SYZ) viewpoint. 
   Strominger-Yau-Zaslow \cite{strominger1996mirror} gave a geometric interpretation of mirror symmetry and proposed a construction of mirror pairs as torus fibrations which are fiberwise dual to each other over the same base.
   Kontsevich-Soibelman \cite{kontsevich2001homological} discussed homological mirror symmetry along this line via the Fukaya-Oh category for the dual torus fibration, which we also call the category of weighted Morse homotopy.  
   Here, Morse homotopy was first introduced by Fukaya in \cite{fukaya1993morse}.   
   Fukaya-Oh \cite{fukaya1997zero} proved that the category of Morse homotopy on a closed manifold is equivalent to the Fukaya category of its cotangent bundle.  
    Futaki-Kajiura introduced the category $Mo(P)$ of weighted Morse homotopy on the moment polytope of compact toric manifolds as a generalization of the category of the weighted Morse homotopy to the case where the base manifold has boundaries. 
    They used this category as a substitute of the Fukaya category of a mirror of a smooth toric manifold $X$, and proposed a version of HMS as an equivalence of the form 
    \[Tr(Mo_{\mathcal{E}}(P))\simeq D^b(coh(X)),\]
   where $\mathcal{E}$ denotes the finite set of Lagrangian sections that are SYZ mirror to holomorphic line bundles in a chosen full exceptional collection of $D^b(coh(X))$. 
   We denote by $Mo_{\mathcal{E}}(P)$ the full subcategory of $Mo(P)$ consisting of Lagrangian sections in $\mathcal{E}$ and $Tr$ denotes Bondal-Kapranov-Kontsevich's construction of triangulated categories \cite{bondal1990enhanced,kontsevich1995homological}.
   This formulation works at least when there exists a full exceptional collection consisting of line bundles on $X$. 
   So far, the equivalence above has been shown when $X$ is the projective space $\mathbb{P}^n$ and their product $\mathbb{P}^n\times \mathbb{P}^m$ in \cite{futaki2021homological}, the first Hirzebruch surface $\mathbb{F}_1$ in \cite{futaki2022homological}, the remaining two cases of toric Fano surfaces by Nakanishi \cite{nakanishi2024homological} and the Hirzebruch surface $\mathbb{F}_k$ with $k\geq 1$ in \cite{nakanishi2024syz}. 
   
    In this paper, we discuss an extension of this version of HMS set-up to compact toric orbifolds, and show that the homological mirror symmetry in the above sense holds for the weighted projective space $\mathbb{P}(q_0,\ldots,q_n)$ with $\gcd(q_0,\ldots,q_n)=1$. Explicitly, we show that there exists an equivalence of triangulated categories
    \[Tr(Mo_{\mathcal{E}}(P))\simeq D^{b}(coh(\mathbb{P}(q_0,\ldots,q_n))).\]
    See Corollary \ref{HMSforWPS}.
   
   This paper is organized as follows.
   In section 2, we recall some definitions of (effective) orbifolds or $V$-manifolds and orbifold vector bundles, which we need for the geometric SYZ approach. 
   In section 3, we also recall toric orbifolds associated to stacky fans 
   and holomorphic line bundles on them, which gives us a combinatorial description of the weighted projective spaces. When we consider the derived category of coherent sheaves on the weighted projective space, we also treat it as a stack.
    In this paper, we refer to a smooth toric Deligne-Mumford stack with generically trivial stabilizers as a toric orbifold.
   In section 4, we discuss an extension of the SYZ torus fibrations set-up to toric orbifolds, and demonstrate it in the case of the weighted projective spaces. 
   In section 5, we recall categories on the both sides for homological mirror symmetry in the above sense. 
   In section 6, we compute the  full subcategory $Mo_{\mathcal{E}}(P)$ of the category $Mo(P)$ of weighted Morse homotopy on the polytope $P$ of the weighted projective space and show the main theorem (Theorem \ref{maintheorem}).
   
   \vspace{\baselineskip}
   \noindent\textbf{Acknowledgements.}
   The author is grateful to the advisor, Hiroshige Kajiura, for sharing his insights and for valuable advice. 
   The author would also like to thank Manabu Akaho for discussion on symplectic orbifolds, and Shinnosuke Okawa and Takahiro Tsushima for explaining the structure of the complex side. 
   The author is also grateful to Masahiro Futaki, Hayato Nakanishi, Kentaro Yamaguchi, and Yukiko Konishi for helpful discussions and for valuable comments.  
   This work was supported by JST SPRING, Grant Number JPMJSP2109.

\section{Preliminaries on orbifolds}
In order to fix notations, we briefly recall and collect some facts about orbifolds or $V$-manifolds in the sense of Satake \cite{satake1956generalization} and orbifold vector  bundles. For more details, we refer the readers to \cite{boyer2007sasakian,satake1957gauss,chen2002orbifold,moerdijk1997orbifolds, baily1957imbedding,adem2007orbifolds}, and the references therein.

\subsection{Orbifolds and  orbifold vector bundles}
Let $X$ be a paracompact Hausdorff space. An $n$-dimensional  \textit{orbifold chart} for an open  set $U\subset X$ is a triple $(\tilde{U},\Gamma, \varphi)$, where $\tilde{U}$ is a connected open subset of $\mathbb{C}^n$, $\Gamma$ is a finite group acting holomorphically and effectively
on $\tilde{U}$, and $\varphi:\tilde{U}\rightarrow U$ is a $\Gamma$-invariant continuous surjective map   
    such that 
    the induced map of $\tilde{U}/\Gamma$ onto $\tilde{U}$ is a biholomorphic.
    
    Let $(\tilde{U},\Gamma,\varphi),\ (\tilde{U'},\Gamma',\varphi')$ be orbifold charts for open sets $U,\ U'$, respectively, and let $U\subset U'$.  
    An \textit{injection} $\lambda:(\tilde{U},\Gamma,\varphi)\rightarrow (\tilde{U'},\Gamma',\varphi')$ is a holomorphic embedding $\lambda:\tilde{U}\rightarrow \tilde{U'}$ such that $\varphi'\circ \lambda=\varphi$. 
      For two injections $\lambda_1,\lambda_2:(\tilde{U},\Gamma,\varphi)\rightarrow (\tilde{U'},\Gamma',\varphi')$, there exists a unique $\gamma'\in\Gamma'$ such that $\lambda_2=\gamma'\circ \lambda_1$. 
      Note that every $\gamma\in\Gamma$ defines an injection given by $\tilde{U}\ni w\mapsto \gamma\cdot w\in \tilde{U}$, since we have $\varphi(\gamma\cdot w)=\varphi(w).$

    An \textit{$n$-dimensional orbifold atlas} on $X$ is a family $\mathcal{U}=\{(\tilde{U}_i,\Gamma_i,\varphi_i)\}_{i\in I}$ of $n$-dimensional orbifold charts, such that
    \begin{enumerate} 
    \item 
    $X=\bigcup_{i\in I}\varphi_i(\tilde{U}_i)$, 
    \item for any two charts $(\tilde{U}_i,\Gamma_i, \varphi_i)$, $(\tilde{U}_j,\Gamma_j,\varphi_j)$ and a point $p\in U_i\cap U_j$, there exists an open neighborhood $U_k\subset U_i\cap U_j$ of $p$ and an orbifold chart $(\tilde{U}_k,\Gamma_k,\varphi_k)\in \mathcal{U}$  for $U_k$ such that there are injections $\lambda_{ki}:(\tilde{U}_k,\Gamma_k,\varphi_k)\rightarrow (\tilde{U}_i,\Gamma_i,\varphi_i)$ and $\lambda_{kj}:(\tilde{U}_k,\Gamma_k,\varphi_k)\rightarrow (\tilde{U}_j,\Gamma_j,\varphi_j)$.
    \end{enumerate}
        An atlas $\mathcal{U}$ is said to be a \textit{refinement} of $\mathcal{V}$ if there exists an injection on every chart of $\mathcal{U}$ into some chart of $\mathcal{V}$.
    Two orbifold atlases are said to be  \textit{equivalent} if there exists a common refinement. 
    A \textit{complex orbifold} is a paracompact Hausdorff space $X$ with an equivalence class of orbifold atlases. 
    We can define \textit{smooth orbifolds} by replacing $\mathbb{C}$ by $\mathbb{R}$ and the words ``holomorphic" by ``smooth".
    \begin{rem}
     Every orbifold atlas for $X$ is contained in a unique maximal one, and two orbifold atlases are equivalent if and only if they are contained in the same maximal one. Therefore, although we often think of $(X,\mathcal{U})$ with an orbifold atlas $\mathcal{U}$, we regard this as working with a maximal atlas.
    \end{rem}

    An \textit{orbifold vector bundle of rank $r$} on an orbifold $\mathcal{X}=(X,\mathcal{U})$ consists of a holomorphic vector bundle $E_{\tilde{U}_i}$ of rank $r$ over $\tilde{U}_i$ for each chart $(\tilde{U}_i,\Gamma_i,\varphi_i)\in\mathcal{U}$ such that for each injection $\lambda_{ji}:(\tilde{U}_i,\Gamma_i,\varphi_i)\rightarrow (\tilde{U}_j,\Gamma_j,\varphi_j)$ there exists a bundle map $E(\lambda_{ji}) :E_{\tilde{U}_i}\rightarrow E_{\tilde{U}_j}|_{\lambda_{ji}(\tilde{U}_i)}$ covering $\lambda_{ji}$, 
    and for any composition of injections $\lambda_{kj}\circ \lambda_{ji}$ we have\[E(\lambda_{kj}\circ \lambda_{ji})= E(\lambda_{kj})\circ E(\lambda_{ji}).\]
     By choosing small enough orbifold charts, we may assume that $E_{\tilde{U}_i}$ is the product vector bundle $\tilde{U}_i\times \mathbb{C}^r$. 
    Then, each bundle map $E(\lambda_{ji})$ can be written as \[E(\lambda_{ji})(w,v)=(\lambda_{ji}(w),h_{\lambda_{ji}}(w)\cdot v),\]
    where $h_{\lambda_{ji}}:\tilde{U}_i\rightarrow GL(r , \mathbb{C})$ is a holomorphic map satisfying 
    \begin{align}\label{cocycle cond}
        h_{\lambda_{kj}\circ\lambda_{ji}}(w)=h_{\lambda_{kj}}(\lambda_{ji}(w)) \circ h_{\lambda_{ji}}(w),\ \forall w\in \tilde{U}_i.
    \end{align}
    We sometimes refer to $\{h_{ji}\}$ as \textit{transition maps}.   
    Note that each $E_{\tilde{U}_i}$ can be viewed as a $\Gamma_i$-equivariant vector bundle in the following way.
     Notice that each $\gamma\in \Gamma_i$ can be viewed as an injection $\gamma: \tilde{U}_i\rightarrow \tilde{U}_i$. 
    Then, consider a map $(w,v)\mapsto (\gamma\cdot w,h_{\gamma}(w)\cdot v)$.
    We see that this defines an action of $\Gamma_i$ on $E_{\tilde{U}_i}\simeq \tilde{U}_i\times \mathbb{C}^r$ as an extension of the action of $\Gamma_i$ on $\tilde{U}_i$.
    Thus, $E_{\tilde{U}_i}$ is a $\Gamma_i$-equivariant vector bundle over $\tilde{U}_i$. 
    In a similar way, we can define smooth orbifold vector bundles and complex orbifold vector bundles. 

    Let $\{E_{\tilde{U}_i}\}$ be an orbifold vector bundle over $\mathcal{X}$. A \textit{section} of $\{E_{\tilde{U}}\}$ consists of a section $s_{\tilde{U}}$ of $E_{\tilde{U}}$ for each orbifold chart such that, for any injection $\lambda_{ji}$,
     we have
     \[s_{\tilde{U_j}}|_{\lambda_{ji}(\tilde{U}_i)}\circ \lambda_{ji}=E(\lambda_{ji})\circ s_{\tilde{U}_i}.\]

    Let $\{E_{\tilde{U}}\}$, $\{F_{\tilde{U}}\}$ be orbifold vector bundles over $\mathcal{X}=(X,\mathcal{U})$. 
    An \textit{orbifold vector bundle homomorphism} $\alpha:\{E_{\tilde{U}}\}\rightarrow \{F_{\tilde{U}}\}$ is a family of vector bundle homomorphisms $\alpha_{\tilde{U}}: E_{\tilde{U}}\rightarrow F_{\tilde{U}}$ (one for each orbifold chart)  that is compatible with the injections in the sense that for any injection $\lambda_{ji}$  we have 
    $\alpha_{\tilde{U}_i}|_{E(\lambda_{ji})(E_{\tilde{U}_i})}\circ E(\lambda_{ji})=F(\lambda_{ji})\circ \alpha_{\tilde{U}_i}$. An \textit{orbifold vector bundle isomorphism} $\alpha$ is an orbifold vector bundle homomorphism that each $\alpha_{\tilde{U}}$ is a vector bundle isomorphism.

\begin{ex}
Let $\mathcal{X}$ be a smooth orbifold. 
The \textit{tangent orbifold bundle} $T\mathcal{X}$ is defined by taking the tangent bundle $T\tilde{U}_i$ for each orbifold chart together with transition maps $\{h_{\lambda_{ji}}\}$, for any injections $\lambda_{ji}$, which are given by the Jacobian matrix of $\lambda_{ji}$. 
In a similar way, we can define the \textit{cotangent orbifold bundle} $T^{*}\mathcal{X}$.
\end{ex}

\begin{ex}
Let $\mathcal{X}$ be an $n$-dimensional complex orbifold. Let us also consider $\mathcal{X}$ as a smooth orbifold of real dimension $2n$. Then, 
for each orbifold chart, the complexified tangent bundle $T\tilde{U}_i\otimes_{\mathbb{R}}\mathbb{C}$ splits into a direct sum $T\tilde{U}_i\otimes_{\mathbb{R}}\mathbb{C}=T^{1,0}\tilde{U}_i\oplus T^{0,1}\tilde{U}_i$, where $T^{1,0}\tilde{U}_i$ and $T^{0,1}\tilde{U}_i$ are subspaces spanned by 
$\frac{\partial}{\partial w_{i1}},\ldots,\frac{\partial}{\partial w_{in}}$ and by $\frac{\partial}{\partial \overline{w_{i1}}},\ldots,\frac{\partial}{\partial \overline{w_{in}}}$, respectively. Here $w_{i1},\ldots, w_{in}$ denotes the holomorphic coordinates in $\tilde{U}_i$.
The \textit{complexified orbifold tangent bundle} $T\mathcal{X}\otimes_{\mathbb{R}}\mathbb{C}$ is a family $\{T\tilde{U}\otimes_{\mathbb{R}}\mathbb{C}\}$ whose transition maps are of the form $\Bigl(
\begin{smallmatrix}
   J(\lambda_{ji}) & 0 \\ 
   0 & \overline{J(\lambda_{ji})}
\end{smallmatrix}
\Bigr)$ 
where $J(\lambda_{ji})=\Bigl(\frac{\partial w_{jk}\circ \lambda_{ji}}{\partial w_{il}}\Bigr)_{k,l}$ is the holomorphic Jacobian matrix of $\lambda_{ji}$.
We have the decomposition
$T\mathcal{X}\otimes_{\mathbb{R}}\mathbb{C}=T^{1,0}\mathcal{X}\oplus T^{0,1}\mathcal{X}$ 
where $T^{1,0}\mathcal{X}, T^{0,1}\mathcal{X}$ are the holomorphic and the antiholomorphic orbifold tangent bundle given by $\{J(\lambda_{ji})\},\{\overline{J(\lambda_{ji})}\}$, respectively.
In a similar way, the \textit{(complexified) cotangent orbifold bundle} can be defined, and we have the decomposition $T^{*}\mathcal{X}\otimes_{\mathbb{R}}\mathbb{C}=(T^{1,0}\mathcal{X})^{*}\oplus (T^{0,1}\mathcal{X})^{*}$.
\end{ex}

A \textit{differential $k$-form} is a smooth section of $\bigwedge^{k}T^{*}\mathcal{X}$. 
Equivalently, to give a differential $k$-form $\omega$ is to give a differential $k$-form $\omega_{\tilde{U}_i}$ on each orbifold chart such that $\lambda_{ji}^{*}\omega_{\tilde{U}_j}=\omega_{\tilde{U}_i}$ for any injections $\lambda_{ji}$. 
In particular, each $\omega_{\tilde{U}_i}$ is a $\Gamma_i$-invariant.
A \textit{differential form of type (p,q)} on $\mathcal{X}$ is a smooth section of $\bigwedge^{p}(T^{1,0}\mathcal{X})^{*}\otimes \bigwedge^{q}(T^{0,1}\mathcal{X})^{*}$. 
The exterior derivative is defined as in the case of manifolds, and we have the natural decomposition $d=\partial +\bar{\partial}$.
Hence, we can define the de Rham cohomology for smooth orbifolds, and the Dolbeault cohomology for complex orbifolds in the usual way. (It is known that de Rham's theorem and Dolbeault's theorem holds in the orbifold case. For more details see \cite{satake1956generalization,baily1957imbedding}.)
We can also define a \textit{connection} $D$ on an orbifold vector bundle $\{E_{\tilde{U}_i}\}$ as a collection of connections $D_{\tilde{U}_i}$ on each $E_{\tilde{U}_i}$ that is compatible with the injections.    

A \textit{Hermitian metric} $h$ on an orbifold consists of a ($\Gamma_i$-invariant) Hermitian metric $h_{\tilde{U}_i}$ on each $\tilde{U}_i$  such that the injections are isometries, i.e., $\lambda_{ji}^{*}(h_{\tilde{U}_j}|_{\lambda_{ji}(\tilde{U}_i)})=h_{\tilde{U}_i}$. 
The imaginary part of a Hermitian metric $h$ gives rise to a real differential (1,1)-form $\omega$. 
If this $\omega$ is a closed form, we say that 
$\mathcal{X}$ is a \textit{K\"{a}hler orbifold}.

\subsection{Orbisheaves and Baily divisors}
An \textit{orbisheaf} $\mathcal{F}$ on an orbifold $\mathcal{X}$ consists of a sheaf $\mathcal{F}_{\tilde{U}}$ on $\tilde{U}$ for each
chart $(\tilde{U},\Gamma,\varphi)$ 
such that for each injection $\lambda_{ji}:(\tilde{U}_i,\Gamma_i,\varphi_i)\rightarrow (\tilde{U}_j,\Gamma_j,\varphi_j)$ there exists an isomorphism of sheaves $\mathcal{F}(\lambda_{ji}):\mathcal{F}_{\tilde{U}_i}\rightarrow \lambda_{ji}^{*}\mathcal{F}_{\tilde{U}_j}$, and these isomorphisms are required to be functorial in injections.

A \textit{map of orbisheaves} $\alpha:\mathcal{F}\rightarrow \mathcal{F}'$ is a family of sheaf maps $\alpha_{\tilde{U}}:\mathcal{F}_{\tilde{U}}\rightarrow \mathcal{F}_{\tilde{U}}^{'}$ (one for each orbifold chart), that is compatible with the injections in the sense that for each injection $\lambda_{ji}$ we have $\lambda_{ji}^{*}\alpha_{\tilde{U}_j}\circ\mathcal{F}(\lambda_{ji})=\mathcal{F}^{'}(\lambda_{ji})\circ \alpha_{\tilde{U}_i}$. 

\begin{ex}
For each chart $(\tilde{U}_i,\Gamma_i,\varphi_i)$, define $\mathcal{O}_{\tilde{U}_i}$ to be the sheaf of germs of holomorphic functions on $\tilde{U}_i$. For each injection $\lambda_{ji}:(\tilde{U}_i,\Gamma_i,\varphi_i)\rightarrow(\tilde{U}_j,\Gamma_j,\varphi_j)$, there exists an isomorphism $\mathcal{O}_{\tilde{U}_i}\rightarrow \lambda_{ji}^{*}\mathcal{O}_{\tilde{U}_j}$ defined by $\mathcal{O}_{\tilde{U}_i,w}\ni f\mapsto f\circ ({\lambda_{ji}|_{\lambda_{ji}(\tilde{U}_i)}})^{-1}\in 
\mathcal{O}_{\tilde{U}_j,\lambda_{ji}(w)}=(\lambda_{ji}^{*}\mathcal{O}_{\tilde{U}_j})_{w}$, which is functorial in the injections. Thus, we define an orbisheaf $\mathcal{O}_{\mathcal{X}}$ of an orbifold $\mathcal{X}$. We refer to it as the \textit{structure sheaf} of $\mathcal{X}$.
\end{ex}

 We can construct an orbisheaf of $\mathcal{O}_{\mathcal{X}}$-modules on $\mathcal{X}$ 
 as a collection of a sheaf of $\mathcal{O}_{\tilde{U}}$ modules on each chart $(\tilde{U},\Gamma,\varphi)$.
 An orbisheaf $\mathcal{M}$ of $\mathcal{O}_{\mathcal{X}}$-modules on $\mathcal{X}$ is said to be \textit{locally free} if for each point $p\in X$ there exists an orbifold chart $(\tilde{U},\Gamma, \varphi)$ around $p$ such that
 $\mathcal{M}_{\tilde{U}}\simeq \mathcal{O}_{\tilde{U}}^{\oplus r}$
 for some positive  integer $r$, which is called \textit{rank $r$} of $\mathcal{M}$. A locally free orbisheaf of rank one is called \textit{invertible orbisheaf}. 
 As in the case of manifolds, there is a one-to-one correspondence between isomorphism classes of holomorphic orbifold vector bundles (resp.\ holomorphic line bundles) over an orbifold $\mathcal{X}$ and isomorphism classes of locally free orbisheaves (resp.\ invertible orbisheaves) on $\mathcal{X}$. 

An \textit{orbidivisor} or \textit{Baily divisor}  on an orbifold $\mathcal{X}$ consists of a Cartier divisor $\mathcal{D}_{\tilde{U}}$ on $\tilde{U}$ for each chart $(\tilde{U},\Gamma, \varphi)$ such that 
     if $\lambda_{ji}:(\tilde{U}_i,\Gamma_i,\varphi_i)\rightarrow (\tilde{U}_j,\Gamma_j,\varphi_j)$ is an injection and $f\in (\mathcal{O}_{\tilde{U}_j}(\mathcal{D}_{\tilde{U}_j}))_{\lambda_{ji}(w)}$, then $f\circ \lambda_{ji} \in(\mathcal{O}_{\tilde{U}_i}(\mathcal{D}_{\tilde{U}_i}))_{w}$. Here, $\mathcal{O}_{\tilde{U}}(\mathcal{D}_{\tilde{U}})$ denotes the sheaf associated to $\mathcal{D}_{\tilde{U}}$.

We see that an orbidivisor $\mathcal{D}$ on $\mathcal{X}$ defines an orbifold holomorphic line bundle as follows. 
 If $\mathcal{D}_{\tilde{U}_i}$ are the divisors of the functions $f_{\tilde{U}_i}$ on $\tilde{U}_i$, we define 
 \[h_{\lambda_{ji}}=\frac{f_{\tilde{U}_j}\circ \lambda_{ji}}{f_{\tilde{U}_i}}\]
  for any injections $\lambda_{ji}$. These $h_{\lambda_{ji}}$ are nonzero holomorphic functions and satisfy the condition \eqref{cocycle cond}. Thus, we obtain an orbifold line bundle on $\mathcal{X}$ associated to $\mathcal{D}$.   
  Equivalently, to each orbidivisor, we can associate an invertible orbisheaf $\mathcal{O}_{\mathcal{X}}(\mathcal{D})$.

\subsection{Weighted projective spaces as orbifolds}   
In this subsection, we recall the weighted projective space as an orbifold. We refer to \cite{mann2005cohomologie,boyer2007sasakian}.
    
Let $Q=(q_0,\ldots,q_n)$ be an $n+1$-tuple of positive integers with $\gcd(q_0,\ldots,q_n)=1$.
Consider the weighted $\mathbb{C}^{*}$-action on $\mathbb{C}^{n+1}\backslash \{0\}$ defined by 
\begin{align}\label{action}
    (z_0,\ldots,z_n)\mapsto \lambda\cdot (z_0,\ldots,z_n):=(\lambda^{q_0}z_0,\ldots,\lambda^{q_n}z_n).
\end{align}
We denote the quotient space $(\mathbb{C}^{n+1}\backslash \{\mathbf{0}\})/\mathbb{C}^{*}$ by $\mathbf{P}(Q)$, which is classically called the weighted projective space.
However, we treat $\mathbf{P}(Q)$ as the underlying space of an orbifold $\mathbb{P}(Q)=(\mathbf{P}(Q),\mathcal{U})$, which we shall describe below, and we refer to this orbifold $\mathbb{P}(Q)$ as the weighted projective space throughout in this paper.

Set $U_i=\{[z_0:\cdots:z_n]\in\mathbf{P}(Q)\mid z_i\neq 0\}$ for $i=0,\ldots,n$, where $[z_0:\cdots:z_n]$ denotes the orbit of $(z_0,\ldots,z_n)$. 
These open sets $U_i$ cover $\mathbf{P}(Q)$. 
Let $\Gamma_i\subset\mathbb{C}^{*}$ be the subgroup of $q_i$-th roots of unity. Then, we have $U_i=\{z_i=1\}/\Gamma_i$. 
For $U_i$, we take an orbifold chart $(\tilde{U}_i,\Gamma_i,\varphi_i)$, where $\tilde{U_i}=\mathbb{C}^{n}\simeq\{z_i=1\}$ with affine coordinates $w_i=(w_{i0},\ldots,\widehat{w_{ii}},\ldots,w_{in})$ satisfying $w_{ij}^{q_i}=\frac{z_{j}^{q_i}}{z_{i}^{q_j}}$. 
The action $\Gamma_i\simeq \mathbb{Z}_{q_i}$ of $\tilde{U}_i$ is given by \[w_i\mapsto\zeta\cdot w_i=(\zeta^{q_0}w_{i0},\ldots,\widehat{w_{ii}},\ldots,\zeta^{q_n}w_{in})\] for $\zeta\in \Gamma_i$.
The map $\varphi_{i}:\tilde{U}_i\rightarrow U_i$ is given by
\[\varphi_{i}(w_i)=[w_{i0}:\ldots:1:\ldots:w_{in}],\] which induces the homeomorphism $\tilde{U}_i/\Gamma_i\xrightarrow{\simeq}U_i$.

Next, for a point $p=[z_0:\cdots:z_n]$ in overlaps, we set $I_p=\{i\in\{0,\ldots,n\}\mid z_i\neq 0\}$. Namely, $p\in \cap_{i\in I_p}U_i$. 
We consider an orbifold chart around $p$ which is induced from $(\tilde{U}_i,\Gamma_i,\varphi_i)$ for any fixed $i\in I_p$ in the following way.
Fix a point $\tilde{p}\in \varphi_{i}^{-1}(p)$, and let us consider a connected open neighborhood of $\tilde{p}$ of the form  
$D^{n}_{i}(\tilde{p},\varepsilon)=D(w_{i0}(\tilde{p}),\varepsilon)\times \cdots\times D(w_{in}( \tilde{p}),\varepsilon)\subset \tilde{U}_i$, where $D(w_{ik} (\tilde{p}),\varepsilon)$ denotes an open disk of radius $\varepsilon$ centered at $w_{ik}(\tilde{p})\in \mathbb{C}$. 
We choose a connected open neighborhood $U_{p,i}$ of $p\in U_i$ given by $U_{p,i}=\varphi_{i}(D^{n}_{i}(\tilde{p},\varepsilon))$. 
By taking smaller $\varepsilon$ if necessary, we may assume that 
$U_{p,i}\subset \cap_{i\in I_p}U_i$ and, for any $j\in I_p$, the preimage can be represented as $\varphi_{j}^{-1}(U_{p,i})=\sqcup_{\tilde{p}\in\varphi_{j}^{-1}(p)}\tilde{U}_{\tilde{p},j}$. 
Here, these $\tilde{U}_{\tilde{p},j}$ are disjoint open subsets in $\tilde{U}_j$ and $\tilde{p}\in\tilde{U}_{\tilde{p},j}$.
Then, a triple $(\tilde{U}_{\tilde{p},i},(\Gamma_i)_{\tilde{p}},\varphi_{i}|_{\tilde{U}_{\tilde{p}}})$ gives an orbifold chart for $U_{p,i}$, where $(\Gamma_i)_{\tilde{p}}$ is the stabilizer of $\tilde{p}$, which coincides with the set of $\gcd(q_{i_0},\ldots,q_{ik})$-th roots of unity when $I_p=\{i_0,\ldots,i_k\}$. 
Note that different choices of points in $\varphi_{i}^{-1}(p)$ give equivalent orbifold charts around $p$. 

For each inclusion $U_{p,i}\subset U_j$, $j\in I_p$, consider a map $\lambda_{ji}:\tilde{U}_{\tilde{p},i}\rightarrow \tilde{U}_j$ given as follows. 
For $j=i$, let $\lambda_{ii}=\text{id}_{\tilde{U}_{\tilde{p}}}$. 
For $j\in I_p\backslash \{i\}$, define $\lambda_{ji}$ by setting 
\[\lambda_{ji}(w_{i0},\ldots,w_{in})=(w_{ij}^{-\frac{q_0}{q_j}}w_{i0},\ldots,\widehat{1},\ldots,w_{ij}^{-\frac{q_0}{q_j}}w_{in}),\]
where we choose a branch of $w_{ij}^{1/q_j}$.
We see that this map satisfies the condition $\varphi_j\circ\lambda_{ji}=\varphi_{i}$, and $\lambda_{ji}$ for any $i,j\in I_p$ 
gives an injection corresponding to an inclusion $U_{p,i}\subset U_j$.  
We note that a triple $(\lambda_{ji}(\tilde{U}_{\tilde{p},i}),(\Gamma_j)_{\lambda_{ji}(\tilde{p})}
,\varphi_{j}|_{\lambda_{ji}(\tilde{U}_{\tilde{p},i})})$ also gives an orbifold chart for $U_{p,i}\subset U_j$, which are equivalent to $(\tilde{U}_{\tilde{p},i},(\Gamma_i)_{\tilde{p}},\varphi_{i}|_{\tilde{U}_{\tilde{p}}})$.
 
We see that these orbifold charts $(\tilde{U}_i,\Gamma_i,\varphi_i)$ for $i=0,\ldots,n$ and orbifold charts of the form $(\tilde{U}_{\tilde{p},i},(\Gamma_i)_{\tilde{p}},\varphi_{i}|_{\tilde{U}_{\tilde{p}}})$ together with injections constructed above give an orbifold atlas $\mathcal{U}$ on $\mathbf{P}(Q)$. 
Thus, we obtain an orbifold $\mathbb{P}(Q)=(\mathbf{P}(Q),\mathcal{U})$, which we call the weighted projective space.

\section{Preliminaries on toric orbifolds}
   
In order to fix notations, we briefly recall and collect some facts about complete toric orbifolds associated to stacky fans in the sense of Borisov-Chan-Smith \cite{borisov2005orbifold}.  By an orbifold we mean a smooth Deligne-Mumford stack whose general point has trivial stabilizer.  
This is the case when a finitely generated abelian group $N$ of a stacky fan has no torsion.
Toric orbifolds are also defined in terms of ``tours action'', which is established by Iwanari and Fantechi-Mann-Nironi, respectively. 
 We refer to \cite{borisov2005orbifold,borisov2009conjecture,fantechi2010smooth,iwanari2009category} for toric orbifolds, and \cite{cox2011toric,oda1988convex,fulton1993introduction} for toric varieties. 
\subsection{Toric orbifolds}
Let $N\simeq \mathbb{Z}^n$ be a lattice of rank $n$, and let $M=\mathrm{Hom}(N,\mathbb{Z})$ be the dual lattice with the natural paring $\langle\ ,\ \rangle:M\times N\rightarrow \mathbb{Z}$.
Let $\Sigma$ be a simplicial fan in $N_{\mathbb{R}}:=N\otimes_{\mathbb{Z}}\mathbb{R}$, i.e., every cone in $\Sigma$ is generated by linearly independent generators over $\mathbb{R}$. 
Let $\rho_0,\ldots,\rho_{m-1}$ be the one-dimensional cones of $\Sigma$, which are called rays. We denote the set of rays by $\Sigma(1)$. 
For $i=0,\ldots,m-1$, let $b_i\in N\cap \rho_i$ be a lattice point, which does not have to be the minimal lattice point.
Then we have a homomorphism of groups $\beta:\mathbb{Z}^{m}\rightarrow N$
determined by $\{b_i\}$. 
We assume that $\beta$ has finite cokernel.
The \textit{stacky fan} is the triple $\boldsymbol{\Sigma}=(N,\Sigma,\beta)$. We call the $b_i$'s \textit{stacky vectors} \footnote{We take the term ``stacky vectors'' by \cite{cho2014holomorphic}, for example.}.

\begin{rem}
    If $\Sigma$ is a complete fan, i.e., its support $\cup_{\sigma\in \Sigma}\sigma$ is the whole space $N_{\mathbb{R}}$, then the $b_i$'s generate $N_{\mathbb{Q}}$, which implies that the assumption can be satisfied. 
\end{rem}

A stacky fan $\boldsymbol{\Sigma}=(N,\Sigma,\beta)$ defines a toric orbifold as follows. 
Let $\beta^{*}:M\rightarrow (\mathbb{Z}^{m})^{*}$  be the dual map of $\beta$, i.e., $\beta^{*}(m)=(\langle m,b_0\rangle, \ldots,\langle m,b_{m-1}\rangle)$, where we abbreviate $\mathrm{Hom}(-,\mathbb{Z})$ by $(-)^{*}$. 
We see that $\beta^{*}$ is injective by the assumption.
We denote the cokernel of $\beta^{*}$ by $DG(\beta)$. 
We then have the following exact sequence
\begin{align}\label{divisor sequence}
    0\rightarrow M \xrightarrow{\beta^{*}}(\mathbb{Z}^m)^{*}\rightarrow DG(\beta)\rightarrow 0,
\end{align}
called the \textit{divisor sequence}.
Applying $\mathrm{Hom}(- ,\mathbb{C}^*)$ to (\ref{divisor sequence}) gives,
\[1\rightarrow \mathrm{Hom}(\mathrm{DG(\beta)},\mathbb{C}^*)\rightarrow (\mathbb{C}^*)^m\rightarrow T_N:=\mathrm{Hom}(M,\mathbb{C}^{*})\rightarrow 1,\]
which remains to be right exact since $\mathbb{C}^{*}$ is divisible.
Let $Z:=\mathbb{C}^m\backslash \mathbf{V}(J_{\Sigma})$  be the open subset of $\mathbb{C}^m$ with the coordinate ring $\mathbb{C}[z_0,\ldots,z_{m-1}]$, defined by the ideal  $J_{\Sigma}:=\langle \prod_{\rho_i\not \subset \sigma}z_i\mid \sigma\in\Sigma\rangle$. 
We set 
\[G:=\mathrm{Hom}(\mathrm{DG}(\beta),\mathbb{C}^*).\]
Then, the algebraic group $G\simeq\mathrm{Ker}((\mathbb{C}^{*})^m\rightarrow T_N)$
determines the induced action of $(\mathbb{C}^{*})^{m}$ on an open subset $Z\subset \mathbb{C}^m$ as $(t_0,\ldots,t_{m-1})\cdot(z_{0},\ldots,z_{m-1})=(t_{0}z_{0},\ldots,t_{m-1}z_{m-1}).$
We thus obtain a quotient stack
\[\mathcal{X}_{\boldsymbol{\Sigma}}:=[Z/G],\]
which becomes a smooth Deligne-Mumford stack with generically trivial stabilizer, and its coarse moduli space is a toric variety $X_{\Sigma}$ (See Proposition 3.7 in \cite{borisov2005orbifold}).  
A toric orbifold has the action of a DM torus $\mathcal{T}=[(\mathbb{C}^{*})^m/G]$ with an open dense orbit isomorphic to $\mathcal{T}$. In fact, this DM torus is an ordinary torus $T=(\mathbb{C}^{*})^m/G\simeq \text{Spec}(\mathbb{C}[M])$. For more details, see \cite{fantechi2010smooth}.

\vspace{\baselineskip}
\noindent \textbf{Open substacks.}
Let $\sigma$ be a cone in $\Sigma$ of maximal dimension $n$.
Viewing a cone $\sigma\in\Sigma$ as the fan consisting of the cone $\sigma$ and all its faces, we can identify $\sigma$ with an open substack of $\mathcal{X}_{\mathbf{\Sigma}}$ (See \cite{borisov2005orbifold} Proposition 4.3).
Let $\beta_{\sigma}:\mathbb{Z}^{n}\rightarrow N$ be the map determined by the set $\{b_i\mid \rho_i\subset  \sigma\}$. Then the stacky fan $\boldsymbol{\sigma}=(N,\sigma,\beta_{\sigma})$ yields an open substack $\mathcal{X}_{\boldsymbol{\sigma}}=[Z_\sigma/G_{\sigma}]$ of $\mathcal{X}_{\boldsymbol{\Sigma}}$. 
We write $N_{\sigma}=\text{Im}\beta_{\sigma}$, which is the sublattice spanned by $\{b_i\mid \rho_i\subset \sigma\}$  and has finite index in $N$, i.e., $N/N_{\sigma}$ is a finite group. 
Then, we see that $G_{\sigma}\simeq N/N_{\sigma}$ and  $\mathcal{X}_{\boldsymbol{\sigma}}$ is expressed as 
\[[\mathbb{C}^n/(N/N_{\sigma})].\]

Let us describe this more explicitly. 
Denote by $M_{\sigma}$ the dual lattice of $N_{\sigma}$.
Let us relabel the generators the $b_i$ of $N_{\sigma}$ as $b_{(i)1},\ldots,b_{(i)n}$, and let  
$b^{*}_{(i)1},\ldots,b^{*}_{(i)n}$ be the dual basis of $M_{\sigma}$, i.e., $\langle b^{*}_{(i)k},b_{(i)l}\rangle=\delta_{k,l}$.
Then, we see that  $Z_{\sigma}=\mathbb{C}^n$ can be written as 
\[\text{Spec}(\mathbb{C}[M_{\sigma}\cap\sigma^{\vee}])=\text{Spec}(\mathbb{C}[\chi^{{b^{*}_{(i)1}}},\ldots,\chi^{{b^{*}_{(i)n}}}])=\mathbb{C}^{n}.\]

The action of $N/N_{\sigma}$ on $\text{Spec}(\mathbb{C}[M_{\sigma}\cap \sigma^{\vee}])$ is given as in toric varieties. See e.g., subsection 1.3 in \cite{cox2011toric} or subsection 1.5 in \cite{oda1988convex}. 
Let $[n]\in N/N_{\sigma}$, and let $w\in\text{Spec}(\mathbb{C}[M_{\sigma}\cap \sigma^{\vee}])$, where we consider a point $w$ as a semigroup homomorphism $\mathbb{C}[M_{\sigma}\cap \sigma^{\vee}]\rightarrow \mathbb{C}$ sending $ m\mapsto \chi^{m}(w)$. Then, $[n]\cdot w$ is defined by a semigroup homomorphism
\[m\mapsto e^{-2\pi\sqrt{-1}\langle m,n\rangle}\chi^{m}(w), \quad m\in M_{\sigma}\cap\sigma^{\vee}.\] 
Here, we denote by $\langle\ ,\ \rangle$ a $\mathbb{Z}$-bilinear map $M_{\sigma}\times N\rightarrow \mathbb{Q}$ which is a common extension of $\langle\ ,\ \rangle :M\times N\rightarrow \mathbb{Z}$ and $\langle\ ,\ \rangle :M_{\sigma}\times N_{\sigma}\rightarrow \mathbb{Z}$. 
Note that the composition $M_{\sigma}/M\times N/N_{\sigma}\rightarrow \mathbb{Q}/\mathbb{Z}\rightarrow \mathbb{C}^{*}$ given by $([m_{\sigma}],[n])\mapsto e^{2\pi\sqrt{-1}\langle m_{\sigma},n\rangle}$ is well-defined.
Furthermore, we see that the moduli space $U_{\sigma}=\text{Spec}(\mathbb{C}[M\cap\sigma^{\vee}])\subset X_{\Sigma}$ of an open substack $\mathcal{X}_{\boldsymbol{\sigma}}\subset \mathcal{X}_{\mathbf{\Sigma}}$ is expressed as 
\[U_{\sigma}=\text{Spec}(\mathbb{C}[M_{\sigma}\cap \sigma^{\vee}]^{N/N_{\sigma}})\simeq \text{Spec}(\mathbb{C}[M_{\sigma}\cap\sigma^{\vee}])/(N/N_{\sigma}).\]

In terms of orbifolds, for $U_{\sigma}$ we have an orbifold chart of the form
\[(\tilde{U}_{\sigma}:=\text{Spec}(\mathbb{C}[M_{\sigma}\cap\sigma^{\vee}])=\mathbb{C}^{n},N/N_{\sigma},\varphi_{\sigma}),\]
where $\varphi_{\sigma}:\tilde{U}_{\sigma}\rightarrow U_{\sigma}$ is a natural projection which factors through $\tilde{U}_{\sigma}/(N/N_{\sigma})$.
If we denote the coordinates of $\tilde{U}_{\sigma}=\mathbb{C}^n$ by $(w_{(i)1},\ldots,w_{(i)n})$, then the action of $N/N_{\sigma}$ is given by 
\[w_{(i)k}\mapsto e^{-2\pi\sqrt{-1}\langle b^{*}_{(i)k},n\rangle}w_{(i)k}, \quad k=1,\ldots,n.\]

\subsection{Weighted projective spaces as toric orbifolds}
Let us describe a toric structure of the weighted projective space $\mathbb{P}(q_0,\ldots,q_n)$ with $\gcd(q_0,\ldots,q_n)=1$. 
Set $l:=\mathrm{lcm}(q_0,\ldots,q_n)$. 
Let $N$ be a lattice of rank $n$ which is generated by vectors $-\frac{l}{q_0}\sum_{i=0}^{n}e_i, \frac{l}{q_1}e_1,\ldots,\frac{l}{q_n}e_n$, where the $e_i$ are the standard basis of $\mathbb{Z}^{n}$. 
Note that these $n+1$ vectors are linearly independent since $q_0(-\frac{l}{q_0}\sum_{i=0}^{n}e_i)+\sum_{i=0}^{n}q_i(\frac{l}{q_i}e_i)=0$. 
Let $\Sigma$ be a fan in $N_{\mathbb{R}}$ whose cones are generated by proper subsets of $\{e_0:=-\sum_{i=1}^{n}e_i,e_1,\ldots,e_n\}$. This $\Sigma$ is a complete and simplicial fan with $(n+1)$-rays $\rho_i=\mathbb{R}_{\geq0}e_i$ for $i=0,\ldots,n$. Then, set 
\[b_0:=-\frac{l}{q_0}\sum_{i=0}^{n}e_i,\quad b_i:=\frac{l}{q_i}e_i,i=0,\ldots,n.\]
Let these $b_i,i=0,\ldots,n$, be stacky vectors. 
Notice that we have
\begin{align}\sum_{i=0}^nq_ib_i=0,\end{align}
which implies that we have the following exact sequence
\[0\rightarrow\mathbb{Z}\xrightarrow{\!^{t}(q_0\ \ldots \ q_n)}\mathbb{Z}^{n+1}\xrightarrow{(b_0\ \ldots \ b_n)}N\rightarrow 0.\]
Since $N$ is free, we obtain the divisor sequence as
\begin{align}\label{divisor sequence for WPS}
0\rightarrow M\xrightarrow{(\langle \ ,b_0\rangle\ \ldots \ \langle \ , b_n\rangle)}\mathbb{Z}^{n+1}\xrightarrow{(q_0\ \ldots \ q_n)}\mathbb{Z}\rightarrow 0. 
\end{align}
We then see that $Z=\mathbb{C}^{n+1}\backslash\{0\}$ and the action $G=\mathbb{C}^{*}$ on $Z$ is given by
\[(z_0,\ldots,z_n)\mapsto (\lambda^{q_0}z_0,\ldots,\lambda^{q_n}z_n), \quad \lambda\in\mathbb{C}^{*}.\]
Thus, we obtain the weighted projective space $\mathbb{P}(q_0,\ldots,q_n)=[(\mathbb{C}^{n+1}\backslash\{0\})/\mathbb{C}^{*}]$ as a toric orbifold, 
and its coarse moduli space is a toric variety $\mathbf{P}(q_0, \ldots , q_n)$, which is also referred to as the weighted projective space, e.g., in \cite{fulton1993introduction}, subsection $2.2$. 

\begin{ex}[$\mathbb{P}(3,2)$]
    The lattice is $N=\mathbb{Z}e_1$. The fan $\Sigma$ in $N_{\mathbb{R}}=\mathbb{R}e_1$ consists of $\sigma_0=[0,\infty)$, $\sigma_1=[0,-\infty)$ and $\sigma_{01}=\{0\}$, and the stacky vectors are $b_0=-2e_1,b_1=3e_1$. 
     Figure \ref{p(3,2)} shows this stacky fan of the weighted projective line $\mathbb{P}(3,2)$.
\end{ex}

\begin{ex}[$\mathbb{P}(1,1,2)$]
    The lattice $N$ is generated by $-2(e_1+e_2),2e_1,e_2$. In particular, we have  $N=2\mathbb{Z}e_1+\mathbb{Z}e_2$,
    and we identify $N$ with $\mathbb{Z}^2$ via $2e_1\mapsto e_1,e_2\mapsto e_2$.
    In $\mathbb{R}^2$, the fan $\Sigma$ consists of $\sigma_0,\sigma_1,\sigma_2$ shown in Figure \ref{p(1,1,2)} together with all its faces. The stacky vectors $b_0,b_1,b_2$ are as in Figure \ref{p(1,1,2)}. 
\end{ex}
    \begin{figure}[htbp]
    \centering
     \begin{minipage}[c]{0.48\linewidth}
\begin{tikzpicture}[scale=0.7]
    \draw(-3.5,0)--(3.5,0);
       \foreach \x in {-3,...,3}
    \fill[black](\x,0)circle(0.07);
    \draw(0,0)node[above]{0}; 
    \draw [very thick, -{Latex[length=2.5mm]}] (0,0) -- (3.9,0);
    \draw [very thick,-{Latex[length=2.5mm]}] (0,0) -- (-3.9,0);
    \draw(-2,0)node[above ]{$b_0=-2e_1$};
    \draw(3,0)node[above]{$b_1=3e_1$};
    \draw(1.5,0)node[below]{$\sigma_0$};
    \draw(-1.5,0)node[below]{$\sigma_1$};
    \filldraw[fill=white,  thick, draw=black](3,0)circle[radius=0.1];
    \filldraw[fill=white, thick, draw=black](-2,0)circle[radius=0.1];
\end{tikzpicture}
\caption{The stacky fan of $\mathbb{P}(3,2)$.}\label{p(3,2)}

     \end{minipage}
    \begin{minipage}[c]{0.48\linewidth}
        \centering
        \begin{tikzpicture}[scale=1.3]         
    \foreach \x in {-2,...,2}
    \foreach \y in {-2,...,2}\fill[black](\x/2,\y/2)circle(0.04); 
        \draw [very thick,-{Latex[length=2.5mm]}] (0,0) -- (0,3/2);
        \draw[very thick,-{Latex[length=2.5mm]}](0,0)--(3/2,0);
        \draw[very thick,-{Latex[length=2.5mm]}](0,0)--(-1.5/2,-3/2);
        \draw(1/2,0)node[below]{$b_1$};
        \draw(0,1/2)node[left]{$b_2$};
        \draw(-0.5,-1)node[above left]{$b_0$};
        \draw(0,0)node[left]{$0$};
    \filldraw[fill=white, thick, draw=black](1/2,0)circle[radius=0.06];
     \filldraw[fill=white, thick, draw=black](0,1/2)circle[radius=0.06];
      \filldraw[fill=white, thick, draw=black](-0.5,-1)circle[radius=0.06];
         \draw(2.5/2,2.5/2)node{$\sigma_0$};
         \draw(2.5/2,-2.5/2)node{$\sigma_2$};
         \draw(-3/2,0)node{$\sigma_1$};
        \end{tikzpicture}
        \caption{The \mbox{stacky fan of $\mathbb{P}(1,1,2)$.}} \label{p(1,1,2)}
        \end{minipage}
        
\end{figure}

    In terms of open substacks, we see that an orbifold charts for $U_i=U_{\sigma_i}=\{[z_0:
    \cdots:z_n]\in \mathbf{P}(q_0,\ldots,q_n)\mid z_i\neq0\}$
    defined in section 2.3 coincides with that  given by open substacks. 
    In particular, we see that 
    \[N/N_{\sigma_i}\simeq \mathbb{Z}/q_i\mathbb{Z}.\]
    In fact, let set 
    $(b_{(i)1},\ldots,b_{(i)n})=(b_0,\ldots,\widehat{b_i},\ldots,b_n)$,
    for each maximal dimensional cone $\sigma_i$ for $i=0,\ldots,n$.
    Note that any lattice point $\sum_{k=0}^{n}n_kb_k\in N=\mathbb{Z}^{n+1}/\mathbb{Z}(q_0,\ldots,q_n)$ can be written as $\sum_{k\neq i}(n_k-\frac{n_iq_k}{q_i})b_k$, and we have $\sum_{k=0}^{n}n_kb_k \equiv -\frac{n_i}{q_i}\sum_{k\neq i}q_kb_k$ modulo  $N_{\sigma_i}=\sum_{k\neq i}\mathbb{Z}b_k$.  
    Consider a homomorphism of abelian groups $\mathbb{Z}\rightarrow N/N_{\sigma_i}$ defined by $n\mapsto -\frac{n}{q_i}\sum_{k\neq i}q_kb_k$. 
    Since $\gcd(q_i,\gcd(q_0,\ldots,\hat{q_i},\ldots,q_n))=1$, the kernel of this map coincides with $q_i\mathbb{Z}$.
    Therefore, $N/N_{\sigma_i}\simeq\mathbb{Z}/q_i\mathbb{Z}$. 
    We see that the action of $\mathbb{Z}/q_i\mathbb{Z}\simeq N/N_{\sigma_i}$ is given as follows. For $[n]\in \mathbb{Z}/q_i\mathbb{Z}$, 
    \begin{multline*}
        [n]\cdot(w_{(i)1},\ldots,w_{(i)n})=\\
        (e^{\frac{2nq_{0}}{q_i}\pi\sqrt{-1}}w_{(i)1},\ldots,e^{\frac{2nq_{i-1}}{q_i}\pi\sqrt{-1}}w_{(i)i},e^{\frac{2nq_{i+1}}{q_i}\pi\sqrt{-1}}w_{(i)i+1},\ldots,e^{\frac{2nq_{n}}{q_i}\pi\sqrt{-1}}w_{(i)n}),
    \end{multline*}
    which coincides with the action of $\Gamma_i=\{\text{the $q_i$-th roots of unity}\}$.

\subsection{Invertible sheaves on toric orbifolds}
    Let $\mathcal{X}_{\boldsymbol{\Sigma}}=[Z/G]$ be a toric orbifold associated to a stacky fan $\boldsymbol{\Sigma}=(N, \Sigma, \beta)$. 
    It is known that a coherent sheaf on a Deligne-Mumford stack $[Z/G]$ is a $G$-equivariant coherent sheaf on $Z$ (see \cite{vistoli1989intersection}).
    Since $Z$ is obtained from $\mathbb{C}^{\Sigma(1)}$ by removing a subspace of codimension at least two, an invertible sheaf on $[Z/G]$ is determined by the structure sheaf $\mathcal{O}_{Z}$ and a character of $G$, $\chi \in\mathrm{Hom}(G,\mathbb{C}^{*})$. 
    By the divisor sequence, note that we have 
    \[\mathrm{Pic}(\mathcal{X}_{\boldsymbol{\Sigma}})\simeq \mathrm{Hom}(G,\mathbb{C}^{*})\simeq DG(\beta)\simeq\mathbb{Z}^{\Sigma(1)}/\beta^{*}(M).\] 
    Such an invertible sheaf can be identified with a sheaf of $\chi$-equivariant regular sections of the trivial line bundle over $Z$ with the $G$-linearization determined by $\chi$.
    In particular, we have the following explicit description of it (see \cite{borisov2009conjecture}, Definition 3.1).
    For $\chi\in \mathrm{Hom}(G,\mathbb{C}^{*})$, if we consider $\chi$ as a character of $(\mathbb{C}^{*})^{\Sigma(1)}$, there exists $(a_0,\ldots,a_{m-1})\in\mathbb{Z}^{\Sigma(1)}$ such that $\chi(t_0,\ldots,t_{m-1})=\prod_{i=0}^{m-1}t_{i}^{a_i}, t=(t_0,\ldots,t_{m-1})\in G\subset (\mathbb{C}^{*})^{\Sigma(1)}$.
    Then the $G$-linearization $G\times Z\times \mathbb{C}\longrightarrow Z\times \mathbb{C}$ of the trivial line bundle over $Z\times\mathbb{C}\rightarrow Z$ is given by 
        \[(t,z,v)\longmapsto (t\cdot z,\prod_{i=0}^{m-1}t_{i}^{a_i}v).\]
    We denote the corresponding invertible sheaf by $\mathcal{O}(\sum_{i=0}^{m-1}a_i\mathcal{D}_i)$. 
    We note that $t\in(\mathbb{C}^{*})^{\Sigma(1)}$ lies in $G\simeq\mathrm{Ker}((\mathbb{C}^{*})^m\rightarrow T_N)$ if and only if $\prod_{i=0}^{m-1}t_{i}^{\langle m,b_i\rangle}=1$ for all $m\in M$. 
    So, if an element $(a_{1}^{'},\ldots,a_{m-1}^{'})\in \mathbb{Z}^{\Sigma(1)}$ satisfies
    \[(a_{1}^{'},\ldots,a_{m-1}^{'})=(a_0,\ldots,a_{m-1})+(\langle m,b_0\rangle,
    \ldots,\langle m,b_{m-1}\rangle)\]
    for some $m\in M$, then $(a_{1}^{'},\ldots,a_{m-1}^{'})$ gives the same $G$-linearization. 
    Namely, $\mathcal{O}(\sum_{i=0}^{m-1}a_i\mathcal{D}_i)=\mathcal{O}(\sum_{i=0}^{m-1}a_{i}^{'}\mathcal{D}_i)$ in $\text{Pic}(\mathcal{X}_{\boldsymbol{\Sigma}})$.

\subsection{Orbifold line bundles over the weighted projective space.}
    Let us return to the weighted projective spaces.
   By the divisor sequence, we have $\mathrm{Pic}(\mathbb{P}(q_0,\ldots,q_n))\simeq \mathbb{Z}$.
   For each $a\in\mathbb{Z}$, there exists $(a_0,\ldots,a_n)\in\mathbb{Z}^{n+1}$ such that  $\sum_{i=0}^{n}q_ia_i=a$, 
   and we write $\mathcal{O}(a)=\mathcal{O}(\sum_{i=0}^{n}a_iD_i)\in \mathrm{Pic}(\mathbb{P}(q_0,\ldots,q_n))$.
   We note that it has the following expression of the corresponding $G$-linearization.
    \[((z_0,\ldots,z_n),v)\mapsto ((\lambda^{q_0}z_0,\ldots,\lambda^{q_n}z_n),\lambda^{a}v),\quad \lambda\in\mathbb{C}^{*}=G. \]

   Let us return to orbifold points of view for the later discussions. We use the same notations in subsection 2.3.
   We want to construct the corresponding orbifold line bundle, which we also denote by $\mathcal{O}(a)$.
   For a toric divisor $[(\{z_i=0\}\cap Z)/G]$, which we identify with $\mathcal{D}_i$, consider an orbidivisor given by $\{(\tilde{U}_k, f_k)\}$, where 
   \[f_k=w_{ki}=\frac{z_i}{{{z_{k}^{\frac{q_k}{q_i}}}}}\] 
   for $k\neq i$, and $f_i=1$.
   Then, for $\mathcal{O}(a)=\mathcal{O}(\sum_{i=0}^{n}a_i\mathcal{D}_i)$ with $\sum_{i=0}^nq_ia_i=a$, consider an orbidivisor $\{(\tilde{U}_k,\prod_{i\neq k}w_{ki}^{a_i})\}$, and we obtain the corresponding orbifold line bundle whose action of $\Gamma_k=\{q_k\text{-th roots of unity}\}$ for each orbifold chart $\tilde{U}_k$ is given by 
   \[((w_{i0},\ldots,\hat{w}_{ii},\ldots, w_{in}),v)\mapsto ((\gamma^{q_0}w_{i0},\ldots,\hat{w_{ii}} \ldots,\gamma^{q_n}w_{in}),\gamma^{a}v), \ \gamma\in\Gamma_i\]
   and transition maps are given by
   \[h_{\lambda_{ij}}=\left(\frac{z_{j}^{1/{q_j}}}{z_{i}^{1/{q_i}}}\right)^{a}.\]

\section{The SYZ torus fibration set-up}
In this section, we discuss an extension of the SYZ torus fibration set-up to toric orbifolds as an analogue of that in \cite{leung2000special,leung2005mirror,chan2009holomorphic,futaki2021homological}. 
More precisely, we discuss the SYZ construction for a DM torus of a toric orbifold, which is isomorphic to the ordinary torus of the underlying toric variety, and we denote this torus by $\check{Y}$. We construct its mirror manifold $Y$ in subsection 4.1. We demonstrate it in the case of the weighted projective spaces in subsection 4.2. 
In subsection 4.3, we discuss the SYZ transformation, where we allow Lagrangian submanifolds of $Y=T^{*}{N}_{\mathbb{R}}/2\pi M$ to be shifted in the fiber direction by elements in $2\pi M_{\mathbb{Q}}$ (modulo $2\pi M$) and assign holomorphic line bundles on $\check{Y}$ equipped with connections. We demonstrate it in the case of the weighted projective spaces in subsection 4.4. 

\subsection{Dual torus fibrations for a toric orbifold}
 For the case of smooth toric varieties, the SYZ construction is applied for the open dense torus orbit $\check{Y}=(\mathbb{C}^{*})^n$, which is also the complement of toric divisors $=X_{\Sigma}\backslash \cup_{\rho\in\Sigma(1)}D_{\rho}$, in order to obtain its mirror manifold $Y$. Analogously, we discuss the SYZ construction for the DM torus of a toric orbifold.

Let $\mathcal{X}_{\mathbf{\Sigma}}$ be an $n$-dimensional complete toric orbifold associated to a stacky fan $\mathbf{\Sigma}=(N,\Sigma,\beta)$.
A toric orbifold $\mathcal{X}_{\mathbf{\Sigma}}=[Z/G]$ has the action of the torus $(\mathbb{C}^{*})^{\Sigma(1)}/G$, which is isomorphic to the ordinary torus $\text{Spec}(\mathbb{C}[M])\simeq(\mathbb{C}^{*})^n$ of $X_{\Sigma}$. 
 We set 
    \[\check{Y}=(\mathbb{C}^{*})^{\Sigma(1)}/G.\]
 We shall consider a torus fibration whose total space is $\check{Y}$ and construct its dual torus fibration $Y$ over the same base, where the base space is an affine manifold. 
        
We first recall that an affine manifold is a smooth manifold equipped with an affine open covering whose coordinate transformations are all affine maps. While it is well known that the cotangent bundle of a smooth manifold is a symplectic manifold with the standard symplectic form, the tangent bundle of an affine manifold becomes a complex manifold.
For more details, see, e.g., \cite{leung2000special,leung2005mirror}.
Now, let us consider $N_{\mathbb{R}}$ as an affine manifold with an open covering $\{(N_{\sigma_i})_{\mathbb{R}}\}_{\sigma_i\in\Sigma(n)}$, where the local coordinates $\check{x}_{(i)1},\ldots,\check{x}_{(i)n}$ of $(N_{\sigma_i})_{\mathbb{R}}\simeq\mathbb{R}^n$ are taken with respect to the base $b_{(i)1},\ldots,b_{(i)n}$ for each $\sigma_i$, and $\Sigma(n)$ denotes the set of the maximal dimensional cones in $\Sigma$.
If we denote the fiber coordinates of $T^{*}N_{\mathbb{R}}|_{(N_{\sigma_{i}})_{\mathbb{R}}}$ and $TN_{\mathbb{R}}|_{(N_{\sigma_{i}})_{\mathbb{R}}}$ by $(y^{(i)1},\ldots,y^{(i)n})$ and $(\check{y}_{(i)1},\ldots,\check{y}_{(i)n})$, respectively, then the corresponding structures on them are locally given by $\sum_{k=1}^nd\check{x}_{(i)k}\wedge dy^{(i)k}$ and $\check{x}_{(i)k}+\sqrt{-1}\check{y}_{(i)k}, k=1,\ldots,n$, respectively.  
Then we obtain dual torus fibrations \[T^{*}N_{\mathbb{R}}/2\pi M\rightarrow N_{\mathbb{R}},\quad TN_{\mathbb{R}}/2\pi N\rightarrow N_{\mathbb{R}},\]
and they become symplectic and complex manifold, respectively.
    
Then, we can consider $\check{Y}$ as $TN_{\mathbb{R}}/2\pi N$ in the following way. In order to describe the local trivialization of it, let us describe this torus $\check{Y}\simeq\text{Spec}(\mathbb{C}[M])=\bigcap_{\sigma_i\in\Sigma(n)}U_{\sigma_i}$ in terms of orbifold charts for $U_{\sigma_i}$. 
Recall that for each maximal dimensional cone $\sigma_{i}\in\Sigma(n)$, we have an orbifold chart for $U_{\sigma_{i}}$, determined by the corresponding open substack $\mathcal{X}_{\boldsymbol{\sigma_i}}\subset \mathcal{X}_{\boldsymbol{\Sigma}}$, of the form 
    \[(\tilde{U}_{\sigma_i}=\mathrm{Spec}(\mathbb{C}[M_{\sigma_i}\cap \sigma_{i}^{\vee}])=\mathbb{C}^n,\ N/N_{\sigma_i},\ \varphi_{\sigma_i})\] 
    with coordinates $w_{(i)1},\ldots,w_{(i)n}$ of $\tilde{U}_{\sigma_i}=\mathbb{C}^n$.
We note that the ordinary torus $\text{Spec}(\mathbb{C}[M])\subset U_{\sigma_i}\simeq\tilde{U}_{\sigma_i}/(N/N_{\sigma_i})$ can be written as 
    \[\text{Spec}(\mathbb{C}[M])=\text{Spec}(\mathbb{C}[M_{\sigma_i}]^{N/N_{\sigma_i}})\simeq\text{Spec}(\mathbb{C}[M_{\sigma_i}])/(N/N_{\sigma_i}),\]
and the right hand side gives the expression of $\check{Y}$ with the inclusion $\check{Y}\hookrightarrow U_{\sigma_i}$.
Also note that the torus $\text{Spec}(\mathbb{C}[M_{\sigma_i}])=(\mathbb{C}^{*})^{n}$ of the upper space $\tilde{U}_{\sigma_i}$ can be identified with $T(N_{\sigma_i})_{\mathbb{R}}/2\pi N_{\sigma_i}\simeq(N_{\sigma_i})_{\mathbb{R}}\times2\pi(N_{\sigma_i})_{\mathbb{R}}/2\pi N_{\sigma_i}$ by setting  \[w_{(i)k}=e^{\check{x}_{(i)k}+\sqrt{-1}\check{y}_{(i)k}},k=1,\ldots,n.\]
Therefore, $\check{Y}$ is locally written as $(T(N_{\sigma_i})_{\mathbb{R}}/2\pi N_{\sigma_i})/(N/N_{\sigma_i})=T(N_{\sigma_i})_{\mathbb{R}}/2\pi N$, and we obtain a torus fibration $\check{p}:\check{Y}\rightarrow N_{\mathbb{R}}$ which is locally expressed as 
    \[\check{Y}|_{(N_{\sigma_i})_{\mathbb{R}}}\simeq(N_{\sigma_i})_{\mathbb{R}}\times2\pi(N_{\sigma_i})_{\mathbb{R}}/2\pi N\longrightarrow (N_{\sigma_i})_{\mathbb{R}},\]
    \[(\check{x}_{(i)1},\ldots,\check{x}_{(i)n},\check{y}_{(i)1},\ldots,\check{y}_{(i)n})\longmapsto (\check{x}_{(i)1},\ldots,\check{x}_{(i)n}),\]
where $(\check{y}_{(i)1},\ldots,\check{y}_{(i)n})$ denotes the fiber coordinates of $\check{Y}|_{(N_{\sigma_i})_{\mathbb{R}}}$ by abuse of notation.
    
Correspondingly, we set $Y=T^{*}N_{\mathbb{R}}/2\pi M$ and consider a torus fibration $p:Y\rightarrow N_{\mathbb{R}}$ which is locally expressed as  
    \[Y|_{(N_{\sigma_i})_{\mathbb{R}}}\simeq(N_{\sigma_i})_{\mathbb{R}}\times2\pi(M_{\sigma_i})_{\mathbb{R}}/2\pi M\longrightarrow (N_{\sigma_i})_{\mathbb{R}},\]
    \[(\check{x}_{(i)1},\ldots,\check{x}_{(i)n},y^{(i)1},\ldots,y^{(i)n})\longmapsto (\check{x}_{(i)1},\ldots,\check{x}_{(i)n}).\]
With the descending structures from $TN_{\mathbb{R}}$ and $T^{*}N_{\mathbb{R}}$, $\check{Y}$ and $Y$ are complex and symplectic manifolds, respectively. 
    
\begin{rem}
        If a maximal dimensional cone $\sigma_i\subset N_{\mathbb{R}}$ is smooth, i.e., the ray generators of the edges of $\sigma_i$ extend to a $\mathbb{Z}$-basis of $N$, then the torus fibers of $Y|_{(N_{\sigma_i})_{\mathbb{R}}}$ and $\check{Y}|_{(N_{\sigma_i})_{\mathbb{R}}}$ are $\mathbb{R}^n/2\pi\mathbb{Z}^n$. 
        However, if $\sigma_i$ is not smooth, which is the case that the action of $N/N_{\sigma_i}$ is not trivial, then the period of the torus fibers are affected by the difference between the sublattice $N_{\sigma_i}$ (resp.$\ M_{\sigma_i}$) and the lattice $N$ (resp.$\ M$). We revisit this situation in the case of the weighted projective line $\mathbb{P}(3,2)$ in the next subsection. (See Example \ref{the ex of torus fibrations}.) 
\end{rem}
    
    For later discussions, let us equip $\mathcal{X}_{\boldsymbol{\Sigma}}$ with a K\"{a}hler structure.\footnote{For details of symplectic (K\"{a}hler) toric orbifolds, see e.g., \cite{lerman1997hamiltonian}.}
    Let $\mu:\mathcal{X}_{\boldsymbol{\Sigma}}\rightarrow 
     P\subset M_{\mathbb{R}}$ be the moment map, where $P$ is the moment polytope.
     Then, we see that the restriction of it to $\check{Y}$ 
     gives us another expression of this torus fibrations $p:Y\rightarrow N_{\mathbb{R}},\ \check{p}:\check{Y}\rightarrow N_{\mathbb{R}}$ as that over the same base $B:=\text{Int}(P)$,
     \[\pi:Y\rightarrow B,\quad \check{\pi}:\check{Y}\rightarrow B,\]
     as follows. 
    Given a K\"{a}hler structure on $\mathcal{X}_{\boldsymbol{\Sigma}}$, denote a K\"{a}hler form on $\tilde{U}_{\sigma_i}$ by 
    $\omega_{{\sigma_i}}$, and denote the corresponding moment map by
    $\mu_{\sigma_i}:\tilde{U}_{\sigma_i}\rightarrow  P_{\sigma_i}\subset (M_{\sigma_i})_{\mathbb{R}}\simeq\mathbb{R}^n$, where 
    $(M_{\sigma_i})_{\mathbb{R}}=M_{\mathbb{R}}\xrightarrow{\simeq}\mathbb{R}^n$ is given by   
    $m\mapsto (\langle m,b_{(i)1}\rangle,\ldots,\langle  m,b_{(i)n}\rangle)+p_i,\ p_i\in\mathbb{R}^n$.
    The restriction of $\omega_{\sigma_i}$ to $(\mathbb{C}^{*})^n\subset\tilde{U}_{\sigma_i}$ can be expressed as 
    \[ 2\sqrt{-1}\partial\overline{\partial}\check{\phi}_{\sigma_i}=\sum_{k,l}\frac{\partial^2\check{\phi}_{\sigma_i}}{\partial \check{x}_{(i)k}\partial \check{x}_{(i)l}}d\check{x}_{(i)k}\wedge d\check{y}_{(i)l}\]
    for some smooth function $\check{\phi}_{\sigma_i}:(N_{\sigma_i})_{\mathbb{R}}\rightarrow \mathbb{R}$. The restricted moment map 
    $\mu_{\sigma_i}|_{(\mathbb{C}^{*})^n}:(\mathbb{C}^{*})^n\rightarrow \text{Int}P_{\sigma_i}=:B_{\sigma_i}$ is  expressed as 
    \[ (\check{x}_{(i)1},\ldots,\check{x}_{(i)n},\check{y}_{(i)1},\ldots,\check{y}_{(i)n})\longmapsto \left(\frac{\partial\check{\phi}_{\sigma_i}}{\partial \check{x}_{(i)1}},\ldots,\frac{\partial\check{\phi}_{\sigma_i}}{\partial \check{x}_{(i)n}}\right). \]
    This map is $N/N_{\sigma_i}$-invariant, and descends to a map $\check{Y}\rightarrow  B$. We denote it by $\check{\pi}$.
    Notice that we have a diffeomorphism $\Phi:N_{\mathbb{R}}\rightarrow B$ which is locally given by 
    \[(\check{x}_{(i)1},\ldots,\check{x}_{(i)n})\longmapsto \left(\frac{\partial\check{\phi}_{\sigma_i}}{\partial \check{x}_{(i)1}},\ldots,\frac{\partial\check{\phi}_{\sigma_i}}{\partial \check{x}_{(i)n}}\right),\]
     and we have $\check{\pi}=\Phi\circ \check{p}$.
    Set $\pi=\Phi\circ p$.  We thus obtain torus fibrations $\pi:Y\rightarrow B,\ \check{\pi}:\check{Y}\rightarrow B$ with the same base $B$. 
    In later discussions, we identify  $p,\check{p}$ with $\pi,\check{\pi}$, respectively, via an identification $ N_{\mathbb{R}}\underset{\Phi}{\simeq}B$.

    We note that we can equip $B$ with a Hessian metric $g$ which is induced by a given K\"{a}hler metric, which we use to define the category $Mo(P)$ of weighted Morse homotopy on $P=\overline{B}$ in subsection 5.2. The K\"{a}hler metric of $\mathcal{X}_{\boldsymbol{\Sigma}}$ is locally expressed as $\sum_{k,l=1}^{n}\frac{\partial^2\check{\phi}_{\sigma_i}}{\partial \check{x}_{(i)k}\partial \check{x}_{(i)l}}(d\check{x}_{(i)k}\otimes d\check{x}_{(i)l}+d\check{y}_{(i)k}\otimes d\check{y}_{(i)l})$ when restricted to $\check{Y}$, and it induces a Hessian metric $\check{g}$ on $N_{\mathbb{R}}$ which is locally given by $\sum_{k,l=1}^{n}\frac{\partial^2\check{\phi}_{\sigma_i}}{\partial \check{x}_{(i)k}\partial \check{x}_{(i)l}}d\check{x}_{(i)k}\otimes d\check{x}_{(i)l}$.  
    Let us set 
    \[\check{g}_{(i)}^{kl}=\frac{\partial^2\check{\phi}_{\sigma_i}}{\partial \check{x}_{(i)k}\partial\check{x}_{(i)l}}.\]
    Then, we equip $B$ with a metric $g$ which is locally given by 
    \[\sum_{k,l=1}^ng^{(i)}_{kl}dx^{(i)k}\otimes dx^{(i)l}\]
    where $(x^{(i)1},\ldots,x^{(i)n})$ denotes the dual coordinates of $B_{\sigma_i}$ and $(g^{(i)}_{kl})$ is the inverse matrix of $(\check{g}_{(i)}^{kl})$.

    \begin{rem}
        By choosing a Hessian metric $g$ on the base space $B$, $TB$ is equipped with a symplectic structure and $T^{*}B$ is equipped with a complex structure via the isomorphism 
        $TB\xrightarrow{\simeq} T^{*}B$  induced by $g$. Moreover, both $TB$ and $T^{*}B$ turn to be K\"{a}hler manifolds. 
        Then, $Y\simeq TB/2\pi M$ and $\check{Y}\simeq T^{*}B/2\pi N$ become K\"{a}hler manifolds with the induced structures. 
    \end{rem}
    
    \subsection{Dual torus fibrations for weighted projective space}    
    Let us return to the weighted projective space. 
    By applying the construction above to the torus  $\check{Y}=(\mathbb{C}^{*})^{n+1}/\mathbb{C}^{*}\simeq (\mathbb{C}^{*})^n$ of $\mathcal{X}_{\boldsymbol{\Sigma}}=\mathbb{P}(Q)$ (for the toric structure of $\mathbb{P}(Q)$, see subsection 3.2), we obtain its dual torus fibration 
    $Y$:
    \[Y=T^{*}N_{\mathbb{R}}/2\pi M\rightarrow N_{\mathbb{R}}\simeq B,\quad \check{Y}=TN_{\mathbb{R}}/2\pi N\rightarrow N_{\mathbb{R}}\simeq B,\]
    where we will fix a K\"{a}hler structure on $\check{Y}$ later in this subsection.
    Let us give an example of these torus fibrations in the case of $\mathbb{P}(3,2)$.
    
    \begin{ex}[$\mathbb{P}(3,2)$] 
        \label{the ex of torus fibrations} 
    Recall that the sublattice $N_{\sigma_0}\subset N$ is $\mathbb{Z}b_{(0)1}=3\mathbb{Z}e$ and $N=\frac{1}{3}\mathbb{Z}b_{(0)1}$. The dual lattice $M_{\sigma_0}\supset M$ is $\mathbb{Z}b^{*}_{(0)1}=\frac{1}{3}\mathbb{Z}e^{*}$ and $M=3\mathbb{Z}b^{*}_{(0)1}$. 
    Then, for $ p\in (N_{\sigma_0})_{\mathbb{R}}$, the torus fibers are given as follows:
    \[Y_p\simeq \{p\}\times 2\pi (M_{\sigma_0})_{\mathbb{R}}/2\pi M\simeq  \mathbb{R}/(2\pi\cdot3\mathbb{Z}),\]  
    \[\left.\check{Y}_p\simeq \{p\}\times 2\pi (N_{\sigma_0})_{\mathbb{R}}/2\pi N\simeq  \mathbb{R}\middle/\left(2\pi\cdot\frac{1}{3}\mathbb{Z}\right).\right.\]
    \end{ex}

    \begin{rem}
        We can also consider $\check{Y}\subset\mathbb{P}(Q)=(\mathbf{P}(Q),\mathcal{U})$ as a complex manifold whose structure is given by the restriction of the orbifold atlas $\mathcal{U}$, since $\check{Y}$ has no orbifold singular points, i.e., every point in $\check{Y}$ has the trivial stabilizer. 
        We can identify this structure of a manifold $\check{Y}$ and the one coming from $\check{Y}=T{N}_{\mathbb{R}}/2\pi N$. 
    \end{rem}

    We equip $\check{Y}\subset\mathbb{P}(Q)=(\mathbf{P}(Q),\mathcal{U})$ with a K\"{a}hler form locally given by
  \[-2\sqrt{-1}\partial\bar{\partial}\log(1+\sum_{j\neq i}(w_{ij}\overline{w_{ij}})^{\frac{q_0\cdots q_n}{q_j}}),\]
    which is of the form when embedded into $\tilde{U}_i=\tilde{U}_{\sigma_i}$\footnote{This K\"{a}hler form actually degenerates at the origin of $\tilde{U}_i$ when we naturally extend it to $\tilde{U}_i$. However, it is enough to consider a K\"{a}hler form on $\check{Y}$. If we consider the corresponding labeled polytope given by stacky vectors $b_i=c_iv_i$, where $c_i$ are positive integers and $v_i$ are the first lattice points in the rays, we can take a K\"{a}hler structure over the whole $\mathbb{P}(Q)$. For more details, see, e.g., \cite{lerman1997hamiltonian,cho2014holomorphic}.}.
    The restriction to $(\mathbb{C}^{*})^n\subset \tilde{U}_{\sigma_i}$ can be written as 
    \[ \sum_{k,l\neq i}\frac{\partial^2\check{\phi_i}}{\partial\check{x}_{ik}\partial\check{x}_{il}}d\check{x}_{ik}\wedge d\check{y}_{il},\quad \check{\phi_i}:=\mathrm{log}(1+\sum_{j\neq i}e^{2\frac{q_0\cdots q_n}{q_j}\check{x}_{ij}}),\]
    where we set $w_{ik}=e^{\check{x}_{ik}+\sqrt{-1}\check{y}_{ik}}, k=0,\ldots,\widehat{i},\ldots,n$. 
    Then, $\check{\pi}:\check{Y}\rightarrow B$ is given by $(\check{x}_{(i)},\check{y}_{(i)})\mapsto (\frac{\partial \check{\phi_i}}{\partial \check{x}_{(i)}})$. 
    Note that the coordinates $\check{x}_{(i)}=(\check{x}_{i0},\ldots,\widehat{\check{x}_{ii}},\ldots,\check{x}_{in})$ of $(N_{\sigma_i})_{\mathbb{R}}$ and the dual coordinates $x^{(i)}=(x^{i0},\ldots,\widehat{x^{ii}},\ldots,x^{in})$ of $B_{\sigma_i}\subset (M_{\sigma_i})_{\mathbb{R}}$ are related by 
    \[x^{il}=\frac{\partial\check{\phi_i}}{\partial\check{x}_{il}}=\frac{2q_0\cdots q_n e^{2\frac{q_0\cdots q_n}{q_l}\check{x}_{il}}}{q_l(1+\sum_{j\neq i}e^{2\frac{q_0\cdots q_n}{q_j}\check{x}_{ij}})}, \quad l=0,\ldots,\widehat{i},\ldots,n.\]
    Here, $B_{\sigma_i}$ is explicitly written as 
    \[B_{\sigma_i}=\{(x^{i0},\ldots,\widehat{x^{ii}},\ldots,x^{in})\mid x^{il}>0,l\neq i, \sum_{l\neq i}q_lx^{il}<2q_0\cdots q_n\}.\]
    For example, Figure \ref{polytope of P(3,2)} and Figure \ref{polytope of P(1,1,2)} show the polytopes $P_{\sigma_i}=\overline{B_{\sigma_i}}$ of the weighted projective line $\mathbb{P}(3,2)$ and the weighted projective plane $\mathbb{P}(1,1,2)$. 

    \begin{figure}[htbp]
        \centering
        \begin{minipage}[c]{0.45\linewidth}
            \centering
               \begin{tikzpicture}[scale=0.6]
                   \draw[->,>=stealth](-0.3,0)--(7,0)node[right]{$x^{(0)1}$};
                   \draw(0,0)node[above=2mm]{$0$};
                   \draw(6,0)node[above=2mm]{$6$};
                   \draw[very thick](0,0)--(6,0);
                   \draw(0,0)node{$[$};
                   \draw(6,0)node{$]$};
               \end{tikzpicture}
               \caption{The polytope $P_{\sigma_0}=[0,6]$ of $\mathbb{P}(3,2)$.}
               \label{polytope of P(3,2)}
            \end{minipage}
        \quad
        \begin{minipage}[c]{0.45\linewidth}
        \centering
            \begin{tikzpicture}[scale=0.7]
                 \draw[->,>=stealth,semithick](-0.3,0)--(4.5,0)node[above]{$x^{(0)1}$};
            \draw[->,>=stealth,semithick](0,-0.5)--(0,2.5)node[left]{${x^{(0)2}}$};
                \draw (0,0)--(0,2)--(4,0)--cycle;
                \draw(4,0) node[below]{4}; 
                \draw(0,2) node[left]{2}; 
                \draw(0,0)node[below left]{$0$};
                \draw[very thick](0,0)--(0,2);
                \draw[very thick](0,0)--(4,0);
                \draw[very thick](4,0)--(0,2);
            \end{tikzpicture}
            \caption{The polytope $P_{\sigma_0}$ of $\mathbb{P}(1,1,2)$}
            \label{polytope of P(1,1,2)}
        \end{minipage}
    \end{figure}
   
    \subsection{Lagrangian submanifolds of $Y$ and holomorphic line bundles on $\check{Y}$}
    Leung-Yau-Zaslow \cite{leung2000special} and Leung \cite{leung2005mirror} discussed a version of Fourier-Mukai transformation, called the SYZ transformation, which gives a correspondence from Lagrangian sections of $Y\rightarrow B$ to holomorphic line bundles with $U(1)$-connections on $\check{Y}$. 
    In \cite{futaki2021homological} based on this, and they however start from pairs of a holomorphic line bundle over a smooth compact toric manifold $X_{\Sigma}$ and a connection on it, and reconstruct Lagrangian sections such that the SYZ transformed pairs are isomorphic to the restrictions of given pairs on $\check{Y}=X_{\Sigma}\backslash \cup_{\rho \in \Sigma(1)}D_{\rho}$. 
    We extend this discussion to the case of toric orbifolds. 
    For convenience of description, we here treat dual torus fibrations $Y,\check{Y}$ as that over the base $N_{\mathbb{R}}(\simeq B)$, although 
    when we consider the category $Mo(P)$ of weighted Morse homotopy on the polytope $P$ (see section 5.2), 
    we treat dual torus fibrations $Y,\check{Y}$ as that over $B=\text{Int}P$. 

    We fix an affine open covering $\{(N_{\sigma_i})_{\mathbb{R}}\}_{\sigma_i\in\Sigma(n)}$ of $N_{\mathbb{R}}$.
    Let $\underline{s}=\{\underline{s}^{(i)}\}_{\sigma_i\in \Sigma(n)}$ be a Lagrangian section of $Y=T^{*}N_{\mathbb{R}}/2\pi M$. 
    Let $[K]\in \mathbb{Z}^{\Sigma(1)}/\beta^{*}(M)$.
    Now, we shall assign a holomorphic line bundle equipped with a connection over $\check{Y}$ to a given Lagrangian section $\underline{s}$ with $[K]$ in the following way. 
    Here, a section $\underline{s}$ of $Y$ is \textit{Lagrangian section}
    if and only if a lift $s=\{s^{(i)}\}_{\sigma_i\in\Sigma(n)}$ of $\underline{s}$ to the covering space $T^{*}N_{\mathbb{R}}$ can be locally expressed as $\sum_{k=1}^{n}s^{(i)k}d\check{x}_{(i) k}=df^{(i)}$ for some smooth function $f^{(i)}$. 
     We refer to such a local function $f^{(i)}$ as a \textit{potential} of the lift $s$.
    
    Take a lift $s=\{s^{(i)}\}_{\sigma_i\in\Sigma(n)}$ of $\underline{s}$.
    For a local expression $s^{(i)}=(s^{(i)1},\ldots,s^{(i)n}):(N_{\sigma_i})_{\mathbb{R}}\rightarrow  2\pi (M_{\sigma_i})_{\mathbb{R}}=\mathbb{R}^n$, let us write $K^{(i)}=(k^{(i)1},\ldots,k^{(i)n})\in M_{\sigma_i}=\mathbb{Z}^n$, where $k^{(i)l}$ are given by elements in $K=(k_0,\ldots,k_{m-1})\in\mathbb{Z}^{\Sigma(1)}$ satisfying 
    \[ \sum_{l=1}^n\Big\langle \sum_{j=1}^nk^{(i)j}b^{*}_{(i)j}, b_{(i)l} \Big\rangle b_{(i)l}=\sum_{\rho_l\subset \sigma_i}\Big\langle \sum_{j=0}^{m-1} k_{j}e^{*}_{j},e_l\Big\rangle b_{l}. \]
    Let us set a $U(1)$-connection on the trivial line bundle over  $\check{Y}|_{(N_{\sigma_i})_{\mathbb{R}}}$ as 
    \begin{align}\label{connection-comesfromLagrangian}
        D_{(i)}:=d-\frac{\sqrt{-1}}{2\pi}\sum_{l=1}^{n}\left(s^{(i)l}(\check{x})+2\pi k^{(i)l}\right)d\check{y}_{(i)l}.
    \end{align}
    We note that if a cone $\sigma_i\in\Sigma(n)$ is smooth, then $s^{(i)}+2\pi K^{(i)}$ is a lift of $\underline{s^{(i)}}$ of $Y|_{(N_{\sigma_i})_{\mathbb{R}}}=T^{*}(N_{\sigma_i})_{\mathbb{R}}/2\pi M$ since the shift of $2\pi K^{(i)}$, which lies in $2\pi M_{\sigma_i}=2\pi M$, corresponds to the zero section of $Y|_{(N_{\sigma_i})_{\mathbb{R}}}$. 
    Hence, the above form is the usual one.
    However, if $\sigma_i\in \Sigma(n)$ is not smooth, the shift of $2\pi K^{(i)}\in2\pi M_{\sigma_i}(\supsetneq 2\pi M)$ may not correspond to the zero section and $s^{(i)}+2\pi K^{(i)}$ may determine a distinct section from $\underline{s^{(i)}}$.
    Also note that regardless of whether a cone $\sigma_i$ is smooth or not, if $K$ comes from an element $m\in M\simeq \beta^{*}(M)\subset \mathbb{Z}^{\Sigma(1)}$, i.e., $K=(\langle m,b_0\rangle,\ldots,\langle m,b_{m-1}\rangle)$, then the shift of $2\pi K^{(i)}=2\pi (\langle m,b_{(i)1}\rangle, \ldots,\langle m,b_{(i)n} \rangle)\in 2\pi M$ can be ignored as a descending section. 

    We see that $\{D_{(i)}\}$ globally defines a connection, which we denote by $D$, on a line bundle $V$ over $\check{Y}$ whose transition functions are given by 
    \[h_{ij}=e^{\sqrt{-1}(K^{(i)} ({}^{t}\!\varphi_{ji}^{-1})-K^{(j)})\cdot {}^{t}\check{y}_{(j)}},\]
    where $\{\varphi_{ij}\}$ 
    denote the transition functions of $Y=T^{*}N_{\mathbb{R}}/M$ (or $\{{}^{t}\!\varphi_{ji}^{-1}\}$ denote the transition functions of $\check{Y}=TN_{\mathbb{R}}/N$) and ${}^{t}\check{y}_{(i)}$ denotes ${}^{t}(\check{y}_{(i)1},\ldots,\check{y}_{(i)n})$.
    Note that another lift $s+2\pi m=\{s^{(i)}+2\pi (\langle m,b_{(i)1}\rangle,\ldots,\langle m,b_{(i)n}\rangle)\},m\in M$ of $\underline{s}$ or another $K'$ such that $K'-K=(\langle m,b_0\rangle,\ldots,\langle m,b_{m-1}\rangle)\in \beta^{*}(M)\subset \mathbb{Z}^{\Sigma(1)}$ for some $m\in M$, gives the same line bundle on $\check{Y}$. Therefore, this assignment is well-defined. 

    The curvature is locally expressed as 
    \[D^2_{(i)}=\frac{\sqrt{-1}}{2\pi}\sum_{k,l=1}^{n}\frac{\partial s^{(i)k}}{\partial \check{x}_{(i)l}}d\check{x}_{(i)l}\wedge d\check{y}_{(i)k},\]
    and the $(0,2)$-part of $D^2$ vanishes since $\underline{s}$ is Lagrangian. 
    Then, this connection $D$ defines a holomorphic structure on $V$.
    Conversely, if a connection $D$ is locally of the form of \eqref{connection-comesfromLagrangian} and the $D^{(0,2)}$ vanishes, then $\underline{s}$ is Lagrangian section. 

    \begin{rem}
        We do not further discuss what class of Lagrangian sections of $Y$ correspond to holomorphic line bundles on $\check{Y}$ that can be extended over the whole $\mathcal{X}_{\boldsymbol{\Sigma}}$. In \cite{chan2009holomorphic}, Chan discussed the SYZ transformation for any smooth projective toric manifold $X_{\Sigma}$ in this point of view. There, he introduced the growth condition for Lagrangian sections, and established a bijective correspondence of the SYZ transformation between $T_{N}(=N_{\mathbb{R}}/N)$-invariant hermitian metrics on holomorphic line bundles $\mathcal{L}_{[a]}$ and Lagrangian sections satisfying growth conditions $(*_{[a]})$ for $[a]\in \mathbb{Z}^{\Sigma(1)}/\iota(M)\simeq \mathrm{Pic}(X_{\Sigma})$, where the isomorphism comes from the divisor sequence. This result gives a bijective correspondence between isomorphic classes of holomorphic line bundles over $X_{\Sigma}$ and equivalence classes of Lagrangian sections of its mirror $(Y,W)$.  
        We expect an analogous discussion holds for projective toric orbifold (\cite{growthconditionfortoricorbifolds}).
    \end{rem}

    \subsection{Lagrangian submanifolds of $Y$ corresponding to $\mathcal{O}(a)$} 

    Let us demonstrate the discussion in the previous subsection in the case of $\mathbb{P}(q_0,\ldots,q_n)$. We begin by equipping $\mathcal{O}(a)$, for $a\in \mathbb{Z}$, with a connection $d+A_a$ in order to reconstruct corresponding Lagrangian sections $\underline{s_a}$. 
    Consider a connection $d+A_a$ which is locally given by connection one-forms such as 
    \begin{align}\label{connection form}
            (A_{a})_{(i)}=-a\frac{\sum_{k\neq i}1/q_k\cdot(w_{ik}\overline{w_{ik}})^{\frac{q_0\cdots q_n}{q_k}}\cdot w_{ik}^{-1}dw_{ik}}{1+\sum_{k\neq i}(w_{ik}\overline{w_{ik}})^{\frac{q_0\cdots q_n}{q_k}}}
    \end{align}
    over an orbifold chart $\tilde{U}_i$, which is $\mathbb{Z}/q_i\mathbb{Z}$-invariant. 
    We can see that these are compatible with the injections over the intersections and determine a connection globally.
    Next, we restrict $\mathcal{O}(a)$ with $d+A_a$ on $\check{Y}=TN_{\mathbb{R}}/N$ and twist it by 
    \begin{align} \label{twisting}
        (\Psi_{a})_{(i)}:=(1+\sum_{k\neq i}e^{2\frac{q_0\cdots q_n}{q_k}\check{x}_{ik}})^\frac{a}{2q_0\cdots q_n}.
    \end{align}
    We then obtain a line bundle over $\check{Y}$, whose transition functions are given by $(\Psi_{a})_{(i)}^{-1}f_{ij}(\Psi_a)_{(j)}=e^{\sqrt{-1}\frac{a}{q_j}\check{y}_{ij}}$, 
    equipped with a connection of the form
    \begin{align}\label{twisted connection}
            (\Psi_{a})_{(i)}^{-1}(d+(A_{a})_{(i)})(\Psi_a)_{(i)} 
            &=d-\sqrt{-1}\sum_{k\neq i}
            \frac{a\cdot e^{2\frac{q_0\cdots q_n}{q_k}\check{x}_{ik}}}{q_k\left(1+\sum_{l\neq i}e^{2\frac{q_0\cdots q_n}{q_k}\check{x}_{il}}\right)}d\check{y}_{ik}. 
    \end{align}
    
    By comparing this with the expression of \eqref{connection-comesfromLagrangian}, we take $K_a\in\mathbb{Z}^{n+1}$ so that $\{s_i\}$ defines a section of $T^{*}N_{\mathbb{R}}\rightarrow N_{\mathbb{R}}\simeq B$.
    We see that such $K_a=(a_0,\ldots,a_n)$ must satisfy 
    \[\sum_{l=0}^{n}q_la_l=a,\]  
    and we obtain the corresponding section whose lifts $s_{a;K_a}$ are expressed as 
    \begin{align}\label{laglangian-section-lift}
        \begin{pmatrix}
            {y}^{(i)1} \\  \\ \vdots \\  \\
            {y}^{(i)n}
        \end{pmatrix}
        =
        \begin{pmatrix}
            s_{a;K_a}^{(i)1}(x) \\ \\ \vdots \\  \\  s_{a;K_a}^{(i)n}(x)
        \end{pmatrix}
        =2\pi\frac{a}{2q_0\ldots q_n} 
        \begin{pmatrix}
            x^{(i)1}\\  \\ \vdots \\   \\ x^{(i)n} 
        \end{pmatrix}
        -2\pi \begin{pmatrix}
                a_0 \\ \vdots \\ a_{i-1} \\ a_{i+1} \\ \vdots \\  a_n
            \end{pmatrix}.
    \end{align}
    We note that we have $M\simeq \beta^{*}(M)=\{(m_0,\ldots,m_n)\in\mathbb{Z}^{n+1}\mid \sum_{l=0}q_lm_l=0\}$ by the divisor sequence \eqref{divisor sequence for WPS}, and
    the corresponding section $\underline{s_{a;K_a}}$ is well-defined. We will often drop the index and simply write $\underline{s_a}$. 

    \begin{rem}
        We observe that our Lagrangian sections $\underline{s_a}=\underline{s_{a;K_a}}$ for $a\in \mathbb{Z}$ come from $[K_a]\in \mathbb{Z}^{n+1}/\beta^{*}(M)$ such that 
        $K_a=(k_0,\ldots,k_n)\in\mathbb{Z}^{n+1}$ corresponds to $\mathcal{O}(a)=\mathcal{O}(\sum_{j=0}^nk_j\mathcal{D}_j)\in\text{Pic}(\mathcal{X}_{\boldsymbol{\Sigma}})\simeq\mathbb{Z}^{n+1}/\beta^{*}(M)$.
    \end{rem}
    
    We see that these $\underline{s_a},a\in \mathbb{Z}$, are indeed Lagrangian sections, as the corresponding local functions that satisfy $df_{a;K_a}^{(i)}=\sum_{j\neq i }s_{a;K_a}^{ij}d\check{x}_{ik}$ are given by 
    \begin{align}\label{potential}
        f_{a;K_a}^{(i)}&=2\pi\frac{a}{2q_0\cdots q_n}\mathrm{log}(1+\sum_{k\neq i}e^{2\frac{q_0\cdots q_n}{q_k}}\check{x}_{ik})-2\pi\sum_{k\neq i}a_{k}\check{x}_{ik}
    +const \nonumber \\
        &=-2\pi\frac{q_ia_i}{2q_0\cdots q_n}\log(2q_0\cdots q_n-\sum_{k\neq i}q_kx^{ik}) \\
        & \quad \quad -2\pi\sum_{k\neq i}\frac{q_ka_k}{2q_0\cdots q_n}\log(q_kx^{ik}) \nonumber \\
        & \quad \quad +2\pi\frac{a}{2q_0\cdots q_n}\log(2q_0\cdots q_n)+ const.\nonumber
    \end{align}
  
    \begin{ex}
        Let us describe Lagrangian sections $\underline{s_a}$ for $a\in\mathbb{Z}$ in the case of the weighted projective line $\mathbb{P}(3,2)$. 
        Recall that the dual torus fibration is locally expressed as $Y|_{B_{\sigma_0}}\simeq B_{\sigma_0}\times 2\pi (M_{\sigma_0})_{\mathbb{R}}/M$, where $B_{\sigma_0}=\{0<x^{(0)1}<6\}$, and the period of the torus fiber is $6\pi$ because of $M=3M_{\sigma_0}$ (cf. Example \ref{the ex of torus fibrations}). 
        A lift $s^{(i)}_{a;K_a}$ of $\underline{s_a}$ is locally of the form
        \[s_{a;K_a}^{(0)1}=2\pi\frac{a}{2}\cdot\frac{x^{(0)1}}{6}-2\pi a_1\]
        for some $K_a=(a_0,a_1)\in\mathbb{Z}^2$ satisfying $3a_0+2a_1=a$. 
        We note that $\lim_{x^{(0)1}\rightarrow +0}s_{a;K_a}^{(0)1}(x^{(0)1})=-2\pi a_1\in 2\pi \mathbb{Z}(=2\pi M_{\sigma_0})$ is uniquely determined modulo $6\pi$.
        Figure \ref{graphofLagrangiansections} shows the graphs of lifts $s_{a;K_a}$ of Lagrangian sections $\underline{s_a}$ for $0\leq a\leq 4$. 
        For example, lifts of $\underline{s_0}$, which corresponds to the structure sheaf $\mathcal{O}$,  appear in $y^{(0)1}=6k\pi$ for $k\in \mathbb{Z}$, and so on.  
    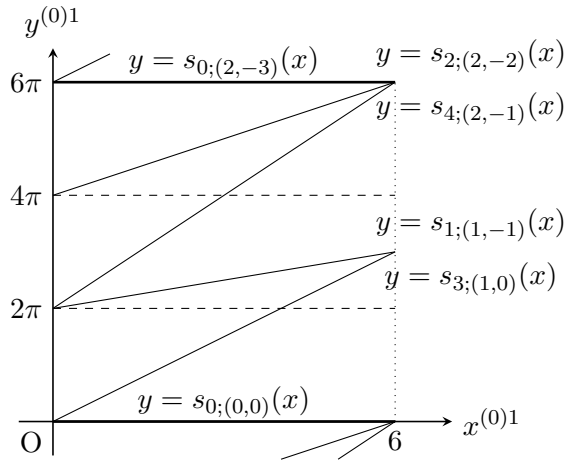
\begin{figure}[htbp]
        \centering
        \begin{tikzpicture}[scale=1.5]
            \draw[->,>=stealth,semithick](-0.3,0)--(3.5,0)node[right]{$ x^{(0)1}$};
            \draw[->,>=stealth,semithick](0,-0.3)--(0,3.3)node[above]{$y^{(0)1}$};
            \draw[dashed](0,1)--(3.0,1);
            \draw[dashed](0,2)--(3.0,2);
            \draw[line width=1pt](0,0)--(3,0);
            \draw[line width=1pt](0,3)--(3,3);
            \draw(0,1)node[left]{$2\pi$};                \draw(0,2)node[left]{$4\pi$};            \draw(0,3)node[left]{$6\pi$};
            \draw(3,0)node[below]{$6$};
            \draw(0,0)node[below left]{O};
    
            \draw[,domain=0:3]plot(\x,\x/6+1)node[right=10mm,above]{$y=s_{1;(1,-1)}(x)$};
            \draw[,domain=0:3]plot(\x,2*\x/6+2)node[right=10mm,above]{$y=s_{2;(2,-2)}(x)$};
                    \draw[,domain=2:3]plot(\x,2*\x/6-1);
            \draw[,domain=0:3]plot(\x,3*\x/6+0)node[right=10mm,below]{$y=s_{3;(1,0)}(x)$};
                    \draw[,domain=0:0.5]plot(\x,3*\x/6+3);
            \draw[,domain=0:3]plot(\x,4*\x/6+1)node[right=10mm,below]{$y=s_{4;(2,-1)}(x)$};
                    \draw[,domain=2.5:3]plot(\x,4*\x/6-2);
            \draw(1.5,0)node[above=-1mm]{$y=s_{0;(0,0)}(x)$};
            \draw(1.5,3)node[above=-1mm]{$y=s_{0;(2,-3)}(x)$};
            \draw[dotted](3,0)--(3,3);           
        \end{tikzpicture}
         \caption{The graphs of lifts of Lagrangian sections $\underline{s_{a}}$ corresponding to $\mathcal{O}(a)$ on $\mathbb{P}(3,2)$}
         \label{graphofLagrangiansections}
    \end{figure}
    \end{ex}

\section{Categories for homological mirror symmetry set-up}
In this section, we recall categories on both sides in the homological mirror symmetry in the sense of \cite{futaki2021homological} with a little modification to toric orbifolds $\mathcal{X}_{\boldsymbol{\Sigma}}$. We continue to use notations in the previous section 4, and let $\pi:Y\rightarrow B$ and $\check{\pi}:\check{Y}\rightarrow B$ be the SYZ dual torus fibration.

\subsection{DG categories of line bundles $DG(\mathcal{X})$ and $\mathcal{V}=\mathcal{V}(\check{Y})$}
    In this subsection, we mainly follow subsection 4.1 in \cite{futaki2021homological} or subsection 2.2 in \cite{futaki2022homological}. 
We define a DG-category $\mathcal{V}=\mathcal{V}(\check{Y})$ of holomorphic line bundles over $\check{Y}$ as follows. 
The objects are holomorphic line bundles $V$ with $U(1)$-connections $D$ on $\check{Y}$ associated to lifts $s$ of Lagrangian sections as we defined in section 4. 
For any two objects $s_a=(V_a,D_a)$, $s_b=(V_b,D_b)$, the space of morphisms is defined by
\[\mathcal{V}(s_a,s_b):=\Gamma(V_a,V_b)\underset{C^{\infty}(\check{Y})}{\otimes}\Omega^{0,*}(\check{Y}), \]
where $\Omega^{0,*}(\check{Y})$ denotes the space of  antiholomorphic differential forms and $\Gamma(V_a,V_b)$ denotes the space of homomorphisms from $V_a$ to $V_b$. 
This space is $\mathbb{Z}$-graded vector spaces, where the grading is given by the degree of the antiholomorphic differential forms. We denote the degree $r$ part by $\mathcal{V}^r(s_a,s_b)$. 
Next, we define the differential $d_{ab}$ on $\mathcal{V}(s_a,s_b)$ as follows. Decompose $D_a$ into its holomorphic part and antiholomorphic part $D_a=D_{a}^{(1,0)}+D_{a}^{(0,1)}$. 
We define a linear map $d_{ab}:\mathcal{V}^r(s_a,s_b)\rightarrow \mathcal{V}^{r+1}(s_a,s_b)$ as
\[d_{ab}(\psi):=2(D_{b}^{(0,1)}\psi-(-1)^r\psi D_{a}^{(0,1)})\]
for $\psi \in \mathcal{V}^{r}(s_a,s_b)$. We see  that $d_{ab}^2=0$, since $(D_{a}^{(0,1)})^2=0$.
The product structure $m:\mathcal{V}\otimes \mathcal{V}(s_b,s_c)\rightarrow \mathcal{V}(s_a,s_c)$ is given by combining the composition of bundle homomorphisms and the wedge product: For $\psi_{ab}\in \mathcal{V}^{r_{ab}}(s_a,s_b)$ and $\psi_{bc}\in \mathcal{V}^{r_{bc}}(s_b,s_c)$, 
\[m(\psi_a,\psi_b):=(-1)^{r_{ab}r_{bc}}\psi_{bc}\wedge\psi_{ab}(=\psi_{ab}\wedge\psi_{bc}).\]
Note that $d_{ab}$ satisfies Leibniz rule with respect to $m$. Thus, we see that $\mathcal{V}$ forms a DG category.

Next, we define the DG category $DG(\mathcal{X}_{\boldsymbol{\Sigma}})$ of holomorphic orbifold line bundles on $\mathcal{X}_{\boldsymbol{\Sigma}}$.
For an orbifold line bundle $V$ on $\mathcal{X}_{\boldsymbol{\Sigma}}$, we take a holomorphic connection $D$ whose restriction to $\check{Y}$ is isomorphic to a line bundle on $\check{Y}$ with a connection of the form 
\[d-\frac{\sqrt{-1}}{2\pi}\sum_{l=1}^{n}\left(s^{l}(\check{x})+2\pi k^{l}\right)d\check{y}_{l},\]
where these $k^{l}$ come from $K\in\mathbb{Z}^{\Sigma(1)}$ such that $s$ defines a lift of section of $Y$.
The objects of $DG(\mathcal{X}_{\boldsymbol{\Sigma}})$ are such pairs $s:=(V,D)$. The space $DG(\mathcal{X}_{\boldsymbol{\Sigma}})(s_a,s_b)$ of morphisms is defined as a graded vector space whose graded piece $DG^r(\mathcal{X}_{\boldsymbol{\Sigma}})(s_a,s_b), r\in \mathbb{Z}$ is given by 
\[DG^{r}(\mathcal{X}_{\boldsymbol{\Sigma}})(s_a,s_b):=\Gamma(V_a,V_b)\underset{C^{\infty}(\mathcal{X}_{\boldsymbol{\Sigma}})}{\otimes}\Omega^{0,r}(\mathcal{X}_{\boldsymbol{\Sigma}}).\]
Here, $\Gamma(V_a,V_b)$ is the space of smooth orbifold bundle morphism form $V_a$ to $V_b$. The composition of morphisms is defined in a similar way as in $\mathcal{V}(\check{Y})$.
The differential $d_{ab}:DG^r(\mathcal{X}_{\boldsymbol{\Sigma}})(s_a,s_b)\rightarrow  DG^{r+1}(\mathcal{X}_{\boldsymbol{\Sigma}})(s_s,s_b)$ is defined by 
\[d_{ab}(\tilde{\psi}):=2\left(D_{b}^{0,1}\tilde{\psi}-(-1)^r\tilde{\psi}D_{a}^{0,1}\right).\]

We then have a faithful embedding $\mathcal{I}:DG(\mathcal{X}_{\boldsymbol{\Sigma}})\rightarrow \mathcal{V}$ by restricting line bundle on $\mathcal{X}_{\boldsymbol{\Sigma}}$ to $\check{Y}$.
We define $\mathcal{V}'$ to be the image $\mathcal{I}(DG(\mathcal{X}_{\boldsymbol{\Sigma}}))$ and regard $DG(\mathcal{X}_{\boldsymbol{\Sigma}})$ as $\mathcal{V}'$.
    
\subsection{The category $Mo(P)$ of weighted Morse homotopy}
We recall the category $Mo(P)$ of weighted Morse homotopy on the polytope $P$, which is proposed by \cite{futaki2021homological} as a generalization of the category of weighted Morse homotopy given by \cite{kontsevich2001homological} to the case when the base space has boundaries and critical points may be degenerate.  
Let $P$ be the polytope of $\mathcal{X}_{\boldsymbol{\Sigma}}$, and $\pi:Y\rightarrow B=\text{Int}P$ the dual torus fibration of $\check{Y}$.
We consider $(B,g)$ as a Riemannian manifold induced by a Hessian structure given by section 4.1.

The objects of $Mo(P)$ are Lagrangian sections $\underline{s}$ of $\pi:
Y\rightarrow B$ corresponding to objects of $DG(\mathcal{X})$ described in the previous subsection. 
Namely, we take Lagrangian sections as objects which are SYZ mirror to holomorphic line bundles on $\check{Y}$ that can be extended to holomorphic line bundles on the whole $\mathcal{X}_{\boldsymbol{\Sigma}}$. 
Hereafter, we identify $\underline{s}$ with its graph, which we denote by $L$, and 
 assume that any two objects $L,L'$ intersect cleanly. 
By cleanly we mean that 
there exists an open set $\tilde{B}\subset M_{\mathbb{R}}$ such that $\overline{B}=P\subset \tilde{B}$ and $L,L'$ can be extended to graphs of smooth sections over $\tilde{B}$ so that they intersect cleanly.
\footnote{In \cite{nakanishi2024homological}, a generalized version of $Mo(P)$ is introduced, and the objects are taken so that they intersect generically cleanly.}

Let $(L, L')$ be an ordered pair of objects in $Mo(P)$. 
Let $V$ be a connected component of $\pi(L\cap L')\subset P$.
For $v\in V$, denote by $S_v\subset \tilde{B}$ the stable manifold of $v$ of the flow of the gradient vector field given by the difference of the extended graphs of $L'-L$, which can be expressed as $-\text{grad}(f_s-f_{s'})$ when restricted to $B$. Here, $f_s,f_{s'}$ denote the corresponding potential of lifts $s,s'$ to the connected component $V$.  Note that we have 
$-\text{grad}(f_s-f_{s'})=-\sum_{k}(\sum_{l}\frac{\partial(f_s-f_{s'})}{\partial x^{l}}\check{g}^{kl})\frac{\partial}{\partial  x^{k}}=-\sum_{k}\frac{\partial (f_s-f_{s'})}{\partial \check{x}_k}\frac{\partial}{\partial  x^{k}}=-\sum_{k}(s-s')\frac{\partial}{\partial x^{k}}$,
where $(\check{g}^{kl})=(g_{kl})^{-1}$. 
Now, for each $L,L'\in Mo(P)$, the space of morphisms is the $\mathbb{Z}$-graded vector space given as 
\[Mo(P)(L,L')=\bigoplus_{V\subset \pi(L\cap L')}\mathbb{C}\cdot V\]
where $V$ runs over all connected components of $\pi(L\cap L')\subset P$ that satisfy the following property:
 there exists a point $v\in V$ such that $v$ belongs to the interior of $S_v\cap P$, where we regard $S_v\cap P$ as a topological subspace of $S_v$. 
 \footnote{We consider the Morse cohomology degree instead of the Morse homology degree.}
The $\mathbb{Z}$-grading $|V|$ of a generator $V$ is defined by $|V|=\dim S_v$, which does not depend on the choice of $v\in V$.
We denote by $Mo^{r}(P)(L,L')$ the $r$-th graded piece of $Mo(P)(L,L')$. 

We explain $A_{\infty}$ products only for $\mathfrak{m}_2$, which is the composition of morphisms, for the following reasons. 
In our examples of weighted projective spaces, we see that the category $Mo(P)$ is minimal, i.e., the differential $\mathfrak{m}_1$ is trivial. 
Moreover, we can concretely take a finite set $\mathcal{E}$ of objects such that $\mathcal{E}$ forms the full strongly exceptional collection of $Tr(Mo_{\mathcal{E}}(P))$. 
In this situation, we see that in the full subcategory $Mo_{\mathcal{E}}(P)$ the higher products $\mathfrak{m}_k,k\geq3$ are all trivial by degree counting, so that $Mo_{\mathcal{E}}(P)$ is a DG category for a chosen $\mathcal{E}$. 
So it is sufficient to compute $\mathfrak{m}_2$ only to compute $Tr(Mo_{\mathcal{E}}(P))$. 
For more details and comments, see \cite{futaki2021homological}, section 4.5.

Let us consider a $3$-tuple  $(L_1,L_2,L_3)$. 
Let $V_{12}\in Mo(P)(L_1,L_2)$ and $V_{23}\in Mo(P)(L_2,L_3)$. 
If $V_{i(i+1)}$ for $i=1,2$ are determined by intersections $\text{graph}(df_i)\cap\text{graph}(df_{i+1})$, then we can take a connected component $V_{13}$ determined by the intersection $\text{graph}(df_{1})\cap\text{graph}(df_{3})$. Define $\mathcal{GT}(v_{12},v_{23};v_{13})$ to be the set of gradient trees starting at $v_{12}\in V_{12}$ and $v_{23}\in V_{23}$ and ending at $v_{13}\in V_{13}$. 
Here, a gradient 2-tree $\gamma\in \mathcal{GT}(v_{12},v_{23};v_{13})$ is a continuous map $\gamma:T\rightarrow P$ with a rooted trivalent tree $T$. Regarding $T$ as a planer tree, the leaf external vertices are mapped to $v_{12},v_{23}$ in order and the root external vertex is mapped to $v_{13}$. Moreover, for each edge $e$ of $T$, $\gamma|_{e}$ is a gradient trajectory of the corresponding gradient vector field, and these gradient trajectories meet at the internal vertex. 
More precisely, $\gamma$ can be identified with $(l_{12}(t),l_{23}(t),l_{13}(t))$ where $l_{i(i+1)}(t),i=1,2$ denote the gradient trajectories corresponding to $-\text{grad}(f_{i}-f_{i+1})$ with $\lim_{t\to -\infty}l_{i(i+1)}(t)=v_{i(i+1)}$ and $l_{13}(t)$ denote the gradient trajectories corresponding to $-\text{grad}(f_{1}-f_{3})$ with $\lim_{t\to \infty}l_{13}(t)=v_{13}$ which satisfy  $l_{12}(0)=l_{23}(0)=l_{13}(0)$. (See Figure \ref{gradient tree}.)
\begin{figure}[htbp]
    \centering 
    \begin{tikzpicture}
        \begin{feynhand}
            \vertex[dot] [label=$v_{12}$](l1) at (-1,1){}; \vertex (0) at (0,0);
            \propag[fer] (l1) to [quarter left,looseness=0.5, edge label'=$-\text{grad}(f_1-f_2)$] (0);
            \vertex[dot] [label=$v_{23}$](l2) at (1,1.5){};
            \propag[fer] (l2) to [quarter right,looseness=0.5, edge label=$-\text{grad}(f_2-f_3)$] (0);
            \vertex[dot] (r) at (0,-1){};
            \propag[fer] (0) to [edge label=$-\text{grad}(f_1-f_3)$](r);
        \end{feynhand}
        \draw (0,-1)node[below]{$v_{13}$};
    \end{tikzpicture}
    \caption{\mbox{A gradient tree $\gamma\in \mathcal{GT}(v_{12},v_{23};v_{13})$}}
    \label{gradient tree}
\end{figure}

Define 
\[\mathcal{GT}(V_{12},V_{23};V_{13}):=\bigcup_{(v_{12},v_{23};v_{13})\in V_{12}\times V_{23}\times V_{13}}\mathcal{GT}(v_{12},v_{23};v_{13})\]
and 
\[\mathcal{HGT}(V_{12},V_{23};V_{13}):=\mathcal{GT}(V_{12},V_{23};V_{13})/\text{smooth homotopy},\]
where two $\gamma,\gamma'$ are $C^{\infty}$-homotopic to each other if $\gamma$ is homotopic to $\gamma'$ so that the restriction $\gamma|_e$ is $C^{\infty}$-homotopic to $\gamma'|_{e}$ for each edge $e$ of $T$. 
We assume that the functions $f_i-f_{j}$ assigned to each edge are (Bott-)Morse-Smale, and the (un)stable manifolds of the vertices intersect transversely. 
Note that $\mathcal{HGT}(V_{12},V_{23};V_{13})$ is a finite set when $|V_{12}|+|V_{23}|=|V_{13}|$. Then, we define the composition $\mathfrak{m}_2$ by
\begin{align*}
    \mathfrak{m}_2:Mo(P)(L_1,L_2)\otimes Mo(P)(L_2,L_3)\rightarrow Mo(P)(L_1,L_3), \\
    \mathfrak{m}_2(V_{12}, V_{23})
    =\sum_{|V_{13}|=|V_{12}|+|V_{23}|}\sum_{[\gamma]\in \mathcal{HGT}(V_{12},V_{23};V_{13})}e^{-A(\gamma)}V_{13}
\end{align*}
where $A(\gamma)$ denotes the symplectic area of disk in $\pi^{-1}(\gamma(T))$. The weight $e^{-A(\gamma)}$ is invariant with respect to a $C^{\infty}$-homotopy.

\section{Homological mirror symmetry for weighted projective spaces} 
    In this section, we discuss the homological mirror symmetry in the sense of \cite{futaki2021homological} for the weighted projective space $\mathbb{P}(q_0,\ldots,q_n)$ (or $\mathbb{P}(Q)$ for short) with $\gcd(q_0,\ldots,q_n)=1$. 
    By \cite{auroux2006mirror,borisov2009conjecture}, it is known that the ordered set 
    \[\mathcal{E}:=(\mathcal{O}(q),\ldots,\mathcal{O}(q+\sum_{i=0}^nq_i+1))\] 
    (for any fixed integer $q$) forms a full strongly exceptional collection of $D^b(coh(\mathbb{P}(Q)))\simeq  Tr(DG_{\mathcal{E}}(\mathbb{P}(Q)))$.  Here, $DG_{\mathcal{E}}(\mathbb{P}(Q))$ is the full subcategory of $DG(\mathbb{P}(Q))$ consisting of holomorphic line bundles in $\mathcal{E}$ and Tr is the Bondal-Kapranov-Kontsevich's construction of triangulated categories \cite{bondal1990enhanced,kontsevich1995homological}. 
    In terms of the SYZ torus fibrations $Y\rightarrow B,\check{Y}\rightarrow B$, it is natural to consider DG category $\mathcal{V}=\mathcal{V}(\check{Y})$ on the complex side. 
    However, what we would like to discuss is a homological mirror symmetry for the whole $\mathcal{X}_{\boldsymbol{\Sigma}}\rightarrow P=\overline{B}$ and its dual. 
    Namely, we should consider the data of the whole space  by adding toric divisors or by adding the boundaries of the polytope. 
    For this purpose, we rather consider a faithful embedding $\mathcal{V}'_{\mathcal{E}}:=\mathcal{I}(DG_{\mathcal{E}}(\mathbb{P}(q_0,\ldots,q_n)))$ on the complex side. 
    On symplectic side, we consider the category $Mo(P)$ of weighted Morse homotopy on the polytope $P$ of $\mathbb{P}(q_0,\ldots,q_n)$ and in particular the full subcategory $Mo_{\mathcal{E}}(P)$.  Here, we abusively denote by $\mathcal{E}$ the ordered set of Lagrangian sections that are SYZ mirror to holomorphic line bundles in $\mathcal{E}=(\mathcal{O}(q),\ldots,\mathcal{O}(q+\sum_{i=0}^nq_i-1))$.
    Then, our main theorem is stated as follows. We note that the correspondence of objects is given by the SYZ transformation. 
\begin{thm}\label{maintheorem}
    For the weighted projective space $\mathbb{P}(q_0,\ldots,q_n)$ with $\gcd(q_0,\ldots,q_n)=1$,  
 there exists a DG-equivalence \[\iota:Mo_{\mathcal{E}}(P)\xrightarrow{\simeq} \mathcal{V}'_{\mathcal{E}}\]
 such that for any generator $V_{ab;K_{ab}}\in Mo_{\mathcal{E}}(P)(L_a,L_b)$ 
\begin{itemize}
    \item $\iota(V_{ab;K_{ab}})\in\mathcal{V}^{'}_{\mathcal{E}}(\iota(L_a),\iota(L_b))$ is a continuous function on $B$ and extends to continuously on $P=\overline{B}$,
    \item we have 
            \[\max_{x\in P}|\iota(V_{ab;K_{ab}})|=1,
            \quad \{x\in P\mid |\iota(V_{ab;K_{ab}})|=1\}=V_{ab;K_{ab}}.\]
\end{itemize}
\end{thm}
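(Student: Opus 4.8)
The plan is to build the functor $\iota$ by hand, matching the generators of the Morse side with a monomial basis of the morphism space on the complex side, and then to check compatibility with $\mathfrak{m}_2$. On objects, $\iota$ is the SYZ transformation fixed in subsection 4.4, sending the graph $L_a$ of $\underline{s_a}$ to the pair $(V_a,D_a)$ built from $\mathcal{O}(a)$; the objects on both sides are indexed by $a$ with $q\le a\le q+\sum_i q_i-1$. First I would identify the morphisms. For an ordered pair $(L_a,L_b)$ with $a<b$, the lifts \eqref{laglangian-section-lift} give $s_b^{(i)l}-s_a^{(i)l}=2\pi\frac{b-a}{2q_0\cdots q_n}x^{(i)l}-2\pi(b_l-a_l)$, which is affine in $x$; hence each connected component of $\pi(L_a\cap L_b)$ is a single point, determined by the shift class $K_{ab}=K_b-K_a$, i.e.\ by a tuple $(c_0,\ldots,c_n)=(b_l-a_l)_l$ with $\sum_l q_lc_l=b-a$. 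The projected intersection point sits at $x^{(i)l}=2q_0\cdots q_n\,c_l/(b-a)$, which lies in $P$ exactly when all $c_l\ge 0$, and the linearization of the gradient field $\sum_l(s_b-s_a)^{(l)}\,\partial/\partial x^l$ there has positive eigenvalues, so $\dim S_v=0$ and these points are genuine degree-$0$ generators. They are in bijection with the monomials $z_0^{c_0}\cdots z_n^{c_n}$ that form a basis of $H^0(\mathbb{P}(Q),\mathcal{O}(b-a))=\mathrm{Hom}(\mathcal{O}(a),\mathcal{O}(b))$, and the vanishing of the higher morphisms (minimality and concentration in degree $0$) follows from strong exceptionality of $\mathcal{E}$ together with the Dolbeault picture of subsection 5.1.

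I would then define $\iota$ on morphisms by sending the generator $V_{ab;K_{ab}}$ to the corresponding monomial section, restricted to $\check{Y}$ and twisted by the factors \eqref{twisting} used to construct $(V_a,D_a)$ and $(V_b,D_b)$, and rescaled by a positive constant $\kappa_{ab}$. In the chart $\tilde U_i$ this reads $\iota(V_{ab;K_{ab}})=\kappa_{ab}\,\Psi_{b-a}^{-1}z^{c}$, a manifestly continuous function on $B$ that extends continuously to $P$ since the monomial extends to a global section on $\mathbb{P}(Q)$ and the twisting factor extends continuously to the closed polytope. To obtain the two displayed properties I would compute
\[
\log|\iota(V_{ab;K_{ab}})|=\sum_{l\neq i}c_l\check{x}_{il}-\frac{b-a}{2q_0\cdots q_n}\log\Bigl(1+\sum_{k\neq i}e^{2\frac{q_0\cdots q_n}{q_k}\check{x}_{ik}}\Bigr)+\log\kappa_{ab},
\]
whose differential in $\check{x}_{il}$ equals $c_l-(b-a)x^{il}/(2q_0\cdots q_n)$ by the coordinate relation of subsection 4.2. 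This vanishes precisely at the intersection point found above, and since the log-sum-exp term is convex the whole expression is concave with that point as its unique interior maximum. Choosing $\kappa_{ab}$ so that this maximum equals $0$ then yields $\max_P|\iota(V_{ab;K_{ab}})|=1$ together with $\{x\in P:|\iota(V_{ab;K_{ab}})|=1\}=V_{ab;K_{ab}}$.

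Finally I would verify that $\iota$ intertwines the products. On the complex side composition is multiplication of sections, so for a composable triple, using multiplicativity $\Psi_{b-a}^{-1}\Psi_{c-b}^{-1}=\Psi_{c-a}^{-1}$, one gets $\iota(V_{12})\cdot\iota(V_{23})=\kappa_{12}\kappa_{23}\,\Psi_{c-a}^{-1}z^{c_{12}+c_{23}}=(\kappa_{12}\kappa_{23}/\kappa_{13})\,\iota(V_{13})$, which forces the target to be the generator with exponent $c_{13}=c_{12}+c_{23}$. On the Morse side one must then show that $\mathcal{HGT}(V_{12},V_{23};V_{13})$ consists of a single homotopy class and that its weight is $e^{-A(\gamma)}=\kappa_{12}\kappa_{23}/\kappa_{13}$. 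Uniqueness of the gradient tree I expect from the affine-linearity of the three fields $-\mathrm{grad}(f_a-f_b)$, which makes their trajectories explicit and the matching internal vertex unique. The main obstacle is the area identity $A(\gamma)=\log\kappa_{13}-\log\kappa_{12}-\log\kappa_{23}$: this requires computing the symplectic area of the disk over $\gamma(T)$ and recognizing it, via the potentials \eqref{potential}, as exactly the stated combination of the critical values $\log\kappa_{ab}$ from the previous step. Once this is established, $\mathfrak{m}_2$-compatibility holds, and since the $\mathfrak{m}_{k\ge 3}$ vanish on $\mathcal{E}$ by degree counting (subsection 5.2), $\iota$ is the desired DG-equivalence.
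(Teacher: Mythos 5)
Your plan retraces the paper's own proof (Lemmas \ref{quasi-isom} and \ref{compatibility-composition}): the functor is the same assignment $V_{ab;K_{ab}}\mapsto \kappa_{ab}\Psi_{b-a}^{-1}z^{K_{ab}}$, which is the paper's $\mathbf{e}_{ab;K_{ab}}=c_{ab;K_{ab}}\psi_{ab;K_{ab}}$ with the same max-modulus normalization, and the theorem is reduced in the same way to $\mathfrak{m}_2$-compatibility. But the step you explicitly defer---the identity $A(\gamma)=\log\kappa_{13}-\log\kappa_{12}-\log\kappa_{23}$, which you call ``the main obstacle''---is precisely the content of Lemma \ref{compatibility-composition}, so as written the proposal is incomplete at its central point. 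The way the paper closes it shows it is not an obstacle at all: normalize the potentials as in \eqref{fab}, i.e.\ $f_{ab;K_{ab}}(v_{ab;K_{ab}})=0$; then $d(f_{ab;K_{ab}}+f_{bc;K_{bc}})=df_{ac;K_{ac}}$, so $f_{ab;K_{ab}}+f_{bc;K_{bc}}-f_{ac;K_{ac}}$ is a constant, equal to its value at $v_{ac;K_{ac}}$, namely $(f_{ab;K_{ab}}+f_{bc;K_{bc}})(v_{ac;K_{ac}})=A(\gamma)$ by \eqref{composition}. Since $|\mathbf{e}_{ab;K_{ab}}|=e^{-\frac{1}{2\pi}f_{ab;K_{ab}}}$ and the phases add ($K_{ab}+K_{bc}=K_{ac}$), exponentiating gives $\mathbf{e}_{ab;K_{ab}}\cdot\mathbf{e}_{bc;K_{bc}}=e^{-A(\gamma)}\mathbf{e}_{ac;K_{ac}}=\iota(\mathfrak{m}_2(V_{ab;K_{ab}},V_{bc;K_{bc}}))$ (up to the paper's $2\pi$ conventions). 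The only genuinely geometric input is the one behind \eqref{composition}---the area of the disk over an edge of the tree between two sections equals the change along that edge of their potential difference; everything else is the multiplicativity you already verified on the complex side, read back through the potentials.

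Two further defects. First, a quasi-isomorphism also requires $Mo_{\mathcal{E}}(P)(L_a,L_b)=0$ for $a>b$, matching the vanishing of $H^{\bullet}(\mathbb{P}(Q),\mathcal{O}(b-a))$ in that range, and your justification---``strong exceptionality of $\mathcal{E}$ together with the Dolbeault picture''---is circular: those are facts about the complex side, whereas the Morse side must be computed independently, this being exactly what the theorem compares. The paper's argument is intrinsic: for $a>b$ the gradient field reverses sign, so $S_v\cap P=P$ and a component is a generator only if it meets $\mathrm{Int}(P)$; since $|b-a|\le\sum_jq_j-1$ forces some $k_l=0$, all these components lie on $\partial P$ and none is a generator (this is also precisely where the length bound defining $\mathcal{E}$ enters). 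Second, the same observation shows your phrase ``unique interior maximum'' is wrong for every generator with $a<b$ in $\mathcal{E}$: the point $v_{ab;K_{ab}}$ lies on $\partial P$, so $\log|\iota(V_{ab;K_{ab}})|$ has no critical point on $B$ and its supremum over $B$ is attained only in the boundary limit. The conclusion survives, but it must be argued on the closed polytope: by \eqref{generators psi}, $|\psi_{ab;K_{ab}}|=\prod_{j=0}^{n}\alpha_j^{q_jk_j/(2q_0\cdots q_n)}$, where the $\alpha_j\ge0$ are affine on $P$ with $\sum_{j}\alpha_j\equiv1$, and weighted AM--GM gives the unique maximizer $\alpha_j=q_jk_j/(b-a)$, which is exactly $v_{ab;K_{ab}}$, whether it is interior or (as here, always) a boundary point.
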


Since we have the DG isomorphism 
\[DG_{\mathcal{E}}
(\mathbb{P}(q_0,\ldots,q_n))
\simeq\mathcal{I}(DG_{\mathcal{E}}(\mathbb{P}(q_0,\ldots,q_n)))
=\mathcal{V}_{\mathcal{E}}^{'},\] 
we obtain the following, as a version of homological mirror symmetry for weighted projective spaces.

\begin{cor}
    There exists a DG-equivalence
        \[Mo_{\mathcal{E}}(P)\simeq DG_{\mathcal{E}}(\mathbb{P}(q_0,\ldots,q_n)).\]
\end{cor}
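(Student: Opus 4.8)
The plan is to deduce the corollary formally, with no new geometric input, by composing two DG-equivalences that are already in hand: the equivalence $\iota\colon Mo_{\mathcal{E}}(P)\xrightarrow{\simeq}\mathcal{V}'_{\mathcal{E}}$ provided by Theorem~\ref{maintheorem}, and the DG-isomorphism $DG_{\mathcal{E}}(\mathbb{P}(q_0,\ldots,q_n))\simeq\mathcal{V}'_{\mathcal{E}}$ displayed immediately above the statement. All of the substantive work---the gradient-tree computation of $\mathfrak{m}_2$, the vanishing of the higher products $\mathfrak{m}_k$ for $k\geq 3$, and the normalization assertions about $|\iota(V_{ab;K_{ab}})|$---resides in the main theorem; the present step is purely a chaining of isomorphisms of DG-categories.

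First I would recall from Section~5.1 that the restriction functor $\mathcal{I}\colon DG(\mathcal{X}_{\boldsymbol{\Sigma}})\to\mathcal{V}$ is a faithful DG-embedding and that $\mathcal{V}'$ is by definition its image $\mathcal{I}(DG(\mathcal{X}_{\boldsymbol{\Sigma}}))$. Restricting $\mathcal{I}$ to the full subcategory $DG_{\mathcal{E}}(\mathbb{P}(Q))$ spanned by the line bundles in the exceptional collection $\mathcal{E}=(\mathcal{O}(q),\ldots,\mathcal{O}(q+\sum_{i=0}^{n}q_i-1))$, and noting that $\mathcal{V}'_{\mathcal{E}}$ is precisely the corresponding image subcategory, I obtain a DG-functor $\mathcal{I}|_{DG_{\mathcal{E}}}\colon DG_{\mathcal{E}}(\mathbb{P}(Q))\to\mathcal{V}'_{\mathcal{E}}$ that is full (its target is the image) and faithful (since $\mathcal{I}$ is), hence a DG-isomorphism. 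This is exactly the displayed identification used in the text.

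It then remains only to compose: the functor $(\mathcal{I}|_{DG_{\mathcal{E}}})^{-1}\circ\iota\colon Mo_{\mathcal{E}}(P)\to DG_{\mathcal{E}}(\mathbb{P}(Q))$ is a composite of DG-equivalences, hence itself a DG-equivalence, which is the assertion of the corollary. I expect no genuine obstacle here, since the entire difficulty is absorbed into Theorem~\ref{maintheorem}. The only point meriting a line of care is the bookkeeping on objects: the objects of $DG_{\mathcal{E}}$ are exactly the bundles $\mathcal{O}(a)$ in $\mathcal{E}$, while $\iota$ sends each Lagrangian section $L_a$ to the corresponding object of $\mathcal{V}'_{\mathcal{E}}$; since both labelings of the objects of $\mathcal{V}'_{\mathcal{E}}$ are fixed by the SYZ transformation (as remarked just before Theorem~\ref{maintheorem}), they agree, so the composite carries $L_a$ to $\mathcal{O}(a)$, realizing the intended mirror correspondence on objects.
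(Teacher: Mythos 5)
Your proposal is correct and matches the paper's own argument, which likewise obtains the corollary by composing the DG-equivalence $\iota\colon Mo_{\mathcal{E}}(P)\xrightarrow{\simeq}\mathcal{V}'_{\mathcal{E}}$ from Theorem \ref{maintheorem} with the DG-isomorphism $DG_{\mathcal{E}}(\mathbb{P}(q_0,\ldots,q_n))\simeq\mathcal{I}(DG_{\mathcal{E}}(\mathbb{P}(q_0,\ldots,q_n)))=\mathcal{V}'_{\mathcal{E}}$ coming from the faithful restriction functor of Section 5.1. Your added care about the object-level bookkeeping ($L_a\mapsto\mathcal{O}(a)$ via the SYZ correspondence) is consistent with the paper, which leaves this implicit.
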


\begin{cor}\label{HMSforWPS}
    There exists an equivalence of  triangulated categories 
    \[Tr(Mo_{\mathcal{E}}(P))\simeq D^{b}(coh(\mathbb{P}(q_0,\ldots,q_n))),\]
    where $Tr$ denotes the construction of triangulated categories from $A_{\infty}$-categories by Bondal-Kapranov\cite{bondal1990enhanced} and Kontsevich\cite{kontsevich1995homological}.
\end{cor}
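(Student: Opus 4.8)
The plan is to deduce the stated triangulated equivalence \emph{formally} from the DG-level equivalence of the immediately preceding corollary, using two inputs: the invariance of the Bondal-Kapranov-Kontsevich construction $Tr$ under DG-quasi-equivalence, and the identification $Tr(DG_{\mathcal{E}}(\mathbb{P}(Q)))\simeq D^{b}(coh(\mathbb{P}(Q)))$ already recorded at the opening of this section. The analytic and combinatorial heart of the matter---matching Lagrangian generators, gradient-tree counts, and Dolbeault morphism complexes---has been settled in Theorem \ref{maintheorem}; what remains here is purely homological-algebraic bookkeeping, namely transporting the equivalence along $Tr$.

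First I would record that the preceding corollary provides a DG-equivalence $Mo_{\mathcal{E}}(P)\simeq DG_{\mathcal{E}}(\mathbb{P}(Q))$, the object correspondence being the SYZ transformation. Because $Mo_{\mathcal{E}}(P)$ is minimal---its differential $\mathfrak{m}_{1}$ vanishes and, on the chosen $\mathcal{E}$, all higher products $\mathfrak{m}_{k}$ with $k\ge 3$ vanish by degree counting (subsection 5.2)---both sides are genuine DG-categories, and the chain-level isomorphism $\iota$ of Theorem \ref{maintheorem} identifies their morphism complexes compatibly with differentials and products. In particular $\iota$ is a DG-\emph{quasi}-equivalence, not merely an equivalence of the underlying cohomology categories; this is the one point that genuinely must be verified before $Tr$ can be applied, and it is exactly what the explicit $\iota$ supplies. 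I regard this as the closest thing to an obstacle: the whole argument would collapse if one had only an equivalence after passing to cohomology.

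Next I would invoke the functoriality of $Tr$: passing to twisted complexes and then to degree-zero cohomology sends a DG-quasi-equivalence $\mathcal{A}\to\mathcal{B}$ to an equivalence of triangulated categories $Tr(\mathcal{A})\xrightarrow{\simeq}Tr(\mathcal{B})$, since both operations preserve quasi-isomorphisms of morphism complexes \cite{bondal1990enhanced,kontsevich1995homological}. Applying this to $\iota$ yields
\[Tr(Mo_{\mathcal{E}}(P))\simeq Tr(DG_{\mathcal{E}}(\mathbb{P}(Q))).\]
Finally I would splice in the known identification. As recalled at the start of this section, $\mathcal{E}$ is a full strongly exceptional collection in $D^{b}(coh(\mathbb{P}(Q)))$ by \cite{auroux2006mirror,borisov2009conjecture}, and $DG_{\mathcal{E}}(\mathbb{P}(Q))$ is a DG-enhancement of the full subcategory on $\mathcal{E}$---its morphism complexes are Dolbeault complexes whose cohomology computes the groups $\mathrm{Ext}^{*}(\mathcal{O}(a),\mathcal{O}(b))$---so that $Tr(DG_{\mathcal{E}}(\mathbb{P}(Q)))\simeq D^{b}(coh(\mathbb{P}(Q)))$. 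Composing the two equivalences gives the desired
\[Tr(Mo_{\mathcal{E}}(P))\simeq D^{b}(coh(\mathbb{P}(Q))).\]
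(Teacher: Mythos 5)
Your proposal is correct and follows essentially the same route as the paper: the corollary is deduced formally from the DG-equivalence $Mo_{\mathcal{E}}(P)\simeq DG_{\mathcal{E}}(\mathbb{P}(q_0,\ldots,q_n))$ established via Theorem \ref{maintheorem}, together with the identification $D^{b}(coh(\mathbb{P}(Q)))\simeq Tr(DG_{\mathcal{E}}(\mathbb{P}(Q)))$ recorded at the start of Section 6 from \cite{auroux2006mirror,borisov2009conjecture}, and the invariance of $Tr$ under DG-quasi-equivalences. Your explicit remark that $\iota$ must be a quasi-equivalence at the chain level (not merely an equivalence of cohomology categories) before $Tr$ can be applied is a point the paper leaves implicit, and it is correctly supplied by the explicit construction of $\iota$.
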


In the first two subsections 6.1, 6.2, we compute the cohomologies of the DG categories $DG(\mathbb{P}_{\mathcal{E}}(q_0,\ldots,q_n))$ and $\mathcal{V}'_{\mathcal{E}}$. 
In subsection 6.3, we compute $Mo_{\mathcal{E}}(P)$, which turns out to be minimal, i.e., have zero differential and a DG category. In the last subsection, we show Theorem \ref{maintheorem} by constructing the DG-equivalence $\iota$ explicitly. 

\subsection{Cohomologies of $DG_{\mathcal{E}}(\mathbb{P}(q_0,\ldots,q_n))$}

        We set $DG(\mathbb{P}(Q))$ as the DG category consisting of holomorphic orbifold line bundles $\mathcal{O}(a),\ a\in\mathbb{Z}$, where each $\mathcal{O}(a)$ is associated with a connection $d+A_a$ given by \eqref{connection form}. 
        Since the space $DG(\mathbb{P}(Q))(\mathcal{O}(a),\mathcal{O}(b))$ of morphism is defined as the Dolbeault resolution of the space of holomorphic morphisms $\text{Hom}(\mathcal{O}(a),\mathcal{O}(b))$, note that we have   
    \begin{align*}H^{r}(DG(\mathbb{P}(Q))(\mathcal{O}(a),\mathcal{O}(b)))&\simeq H^{r}(DG(\mathbb{P}(Q))(\mathcal{O},\mathcal{O}(b-a)))\\ &\simeq H^{r}(\mathbb{P}(Q),\mathcal{O}(b-a)).\end{align*}

    By Proposition 2.7 in \cite{auroux2008mirror}, we have 
    \begin{align}
        H^r(DG_{\mathcal{E}}(\mathbb{P}(Q))(\mathcal{O}(a),\mathcal{O}(b)))=
        \begin{cases}
            S_{b-a} \quad \text{for} \ r=0,\ a\leq b,\\
            0 \quad \text{otherwise},            
        \end{cases}
    \end{align}
    where $S$ denotes the graded algebra defined by the polynomial algebra $\mathbb{C}[z_0,\ldots,z_n]$ graded by $\text{deg}(z_i)=q_i$ for $i=0,\ldots,n$ and $S_r$ denotes the $r$-th graded piece of $S=\bigoplus_{r=0}^{\infty}S_r$.
If we use the coordinates $w_{i0},\ldots,\widehat{w_{ii}},\ldots, w_{in}$ for $\widetilde{U_i}\simeq \mathbb{C}^n$, 
each generator of $H^{0}(DG(\mathbb{P}(Q))(\mathcal{O}(a),\mathcal{O}(b)))$, $a<b$, can be locally expressed as 
\begin{align} 
    \widetilde{\psi}_{ab;K_{ab}}:=(w_{i0})^{k_0}\cdots (w_{i,i-1})^{k_{i-1}}(w_{i,i+1})^{k_{i+1}}\cdots (w_{in})^{k_n}, 
\end{align}
    where $K_{ab}=(k_0,\ldots,k_n)\in \mathbb{Z}_{\geq0}^{n+1}$ satisfies $\sum_{j=0}^{n}q_jk_j=b-a$. 

\subsection{Cohomologies of $\mathcal{V}'_{\mathcal{E}}$}  

        We set $\mathcal{V}=\mathcal{V}(\check{Y})$ as the DG category consisting of the restricted line bundles $\mathcal{O}(a)|_{\check{Y}},\ a\in\mathbb{Z}$, equipped with the twisted connections \eqref{twisted connection}. 
        Then, we consider the faithful embedding $\mathcal{I}: DG(\mathbb{P}(Q))\rightarrow \mathcal{V}$ and identify $DG(\mathbb{P}(Q))$ with its image 
            \[\mathcal{V}':=\mathcal{I}(DG(\mathbb{P}(Q))).\]
        We note that $\mathcal{V}'$ is non-full subcategory of $\mathcal{V}$ since morphisms of $\mathcal{V}=\mathcal{V}(\check{Y})$ are not required to be smooth on the toric divisors  $[(z_j=0)\cap Z/\mathbb{C}^{*}]$. 

        For $a<b$, the functor $\mathcal{I}$ maps each generator $[\widetilde{\psi}_{ab;K_{ab}}]$ of $H^{0}(DG(\mathbb{P}(Q))(\mathcal{O}(a),\mathcal{O}(b)))$ to each generator $[\psi_{ab;K_{ab}}]$ of $ H^{0}(\mathcal{V}^{'}(\mathcal{O}(a),\mathcal{O}(b)))$:
        \begin{align*}
            \psi_{ab;K_{ab}}&:=\Psi^{-1}_{b}\circ (\tilde{\psi}_{ab;K_{ab}})|_{\check{Y}}\circ \Psi_{a}\\
                            &=\Big(1+\sum_{j\neq i}e^{\frac{2q_0\cdots q_n}{q_j}\check{x}_{ij}}\Big)^{\frac{-b+a}{q_0\cdots q_n}}e^{K_{ab}^{(i)}\cdot{}^{t}\check{x}_{(i)}+\sqrt{-1}K_{ab}^{(i)}\cdot{}^{t}\check{y}_{(i)}}.
        \end{align*}
        If we use the dual coordinates $x^{(i)}=(x^{i1},\ldots,\widehat{x^{ii}},\ldots,x^{in})$, each $\psi_{ab;K_{ab}}$ is locally expressed as 
        \begin{align}\label{generators psi}
            \psi_{ab;K_{ab}}
            &=\left(\frac{2q_0\cdots q_n-q_0x^{i0}-\cdots-q_nx^{in}}{2q_0\cdots q_n}\right)^{\frac{q_ik_i}{2q_0\cdots q_n}}  \\ 
            &\qquad \times
            \left(\frac{q_0x^{i0}}{2q_0\cdots q_n}
            \right)^{\frac{q_0k_0}{2q_0\cdots q_n}}\cdots 
            \left(\frac{q_nx^{in}}{2q_0\cdots q_n}
            \right)^{\frac{q_nk_n}{2q_0\cdots q_n}}
            e^{\sqrt{-1}K_{ab}^{(i)}\cdot{}^{t}\check{y}_{(i)}}.
            \nonumber 
        \end{align}
        By this expression, we see that  each $\psi_{ab;K_{ab}}$ can be extended continuously on the whole polytope $P$.

\subsection{The Category $Mo(P)$ and cohomologies of $Mo_{\mathcal{E}}(P)$}
The objects of $Mo(P)$, where $P$ is the polytope of the weighted projective space $\mathbb{P}(q_0,\ldots,q_n)$, 
are the Lagrangian sections $\underline{s_a}$ of the dual torus fibration $\pi:Y\rightarrow B$ obtained in subsection 4.4.
Let us compute the space $Mo(P)(L_a,L_b)$ of morphisms. 
Firstly, in order to list up all the candidates of generators of morphisms, 
 we consider intersections of the graphs of lifts $y=s_{a;K_a}(x)$ and $y=s_{b;K_b}(x)$ of Lagrangian sections. Here, we treat them as continuously extended ones over $P$. 
The connected components of $\pi(L_a\cap L_b)\subset P$ are obtained by solving 
\[s_{b;K_b}(x)-s_{a;K_a}(x)=0,\quad x\in P.\]
    This is locally expressed as 
    \[2\pi\frac{b-a}{2q_0\cdots q_n}x^{(i)}-2\pi(K_{b}^{(i)}-K_{a}^{(i)})=0,\quad x^{(i)}\in P_{\sigma_i}.\]
    We note that the polytope $P_{\sigma_i}$ has an expression as a convex hull of the finite set $\{0,v^{i0},\ldots,\widehat{v^{ii}},\ldots,v^{in}\}$, where $v^{ik}\in \mathbb{R}^n,\ k\neq i$, is defined by $x^{il}(v^{ik})=\delta_{kl}\cdot \frac{2q_0\cdots q_n}{q_k},\ l\neq i$. Here, $\delta_{kl}$ is the Kronecker delta.  
    Namely, \[P_{\sigma_i}=\bigg\{\sum_{j\neq i}t_jv^{ij}\mid t_j\in\mathbb{R}_{\geq 0}\ \text{for}\ j\neq i, \sum_{j\neq i}t_j\leq 1\bigg\}.\] 
    If $a=b$, we immediately see that we have $P$ as the only connected component of $\pi(L_a\cap L_b)$, which is sometimes denoted by $V_{aa;(0,0,0)}$.  For $a\neq b$, we have the following.
\begin{lem}\label{lemintersection-MH}
	Let $a\neq b$. 
    Then, each connected component of $\pi(L_a\cap L_b)$ consists of a point $v_{ab;K_{ab}}$ given by
	\[ x^{(i)}(v_{ab;K_{ab}})= \frac{1}{|b-a|}\left\{		q_0k_{0}\begin{pmatrix}\frac{2q_0\cdots q_n}{q_0}\\ 0 \\  \vdots \\ \\ 0  \end{pmatrix}
	+\cdots+q_ik_{i}\begin{pmatrix}0\\ \\ \vdots\\ \\  0\end{pmatrix}
    +\cdots+q_nk_{n}\begin{pmatrix}0\\ \\\vdots  \\0 \\ \frac{2q_0\cdots q_n}{q_n}\end{pmatrix} 
	\right\}
	,\]
	where $K_{ab}=(k_{0},\ldots,k_{n})\in\mathbb{Z}_{\geq 0}^{n+1}$ satisfies $\sum_{j=0}^{n}q_{j}k_{j}=|b-a|$, i.e., $v_{ab;K_{ab}}=\sum_{l\neq i}\frac{q_lk_l}{|b-a|}v^{il}$.
\end{lem}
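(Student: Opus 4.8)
The plan is to work one affine chart $\sigma_i$ at a time and to reduce the statement to a count of lattice points inside the polytope $P_{\sigma_i}$. By interchanging the two sections I may assume $a<b$, so that $|b-a|=b-a>0$; the case $a>b$ is symmetric. First I would make the intersection condition precise: since $L_a,L_b$ are graphs of sections of $Y=T^{*}N_{\mathbb{R}}/2\pi M$, a base point $x$ lies in $\pi(L_a\cap L_b)$ exactly when the chosen lifts satisfy $s_{b;K_b}^{(i)}(x)-s_{a;K_a}^{(i)}(x)\in 2\pi M$ on $\sigma_i$. By \eqref{laglangian-section-lift} this difference is the affine function $2\pi\frac{b-a}{2q_0\cdots q_n}x^{(i)}-2\pi K_{ab}^{(i)}$, with $K_{ab}^{(i)}:=K_b^{(i)}-K_a^{(i)}$. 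Because $b\neq a$ its linear part is nondegenerate, so $x^{(i)}\mapsto \frac{b-a}{2q_0\cdots q_n}x^{(i)}-K_{ab}^{(i)}$ is an affine isomorphism of $(M_{\sigma_i})_{\mathbb{R}}$, and the solution set is the preimage of the lattice $M$, i.e.\ a single coset of $\frac{2q_0\cdots q_n}{b-a}M$. In particular it is discrete, which already establishes that every connected component of $\pi(L_a\cap L_b)$ is a single point; the solutions are exactly $x^{(i)}=\frac{2q_0\cdots q_n}{b-a}\bigl(K_{ab}^{(i)}+m\bigr)$ with $m\in M$.

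The core of the argument is then to intersect this coset with $P_{\sigma_i}=\{x^{il}\geq 0,\ \sum_{l\neq i}q_lx^{il}\leq 2q_0\cdots q_n\}$ and to match the surviving points with the index set $\{(k_0,\ldots,k_n)\in\mathbb{Z}_{\geq 0}^{n+1}\mid \sum_j q_jk_j=b-a\}$. Writing $\tilde K:=K_{ab}^{(i)}+m\in M_{\sigma_i}=\mathbb{Z}^n$, the point $x^{il}=\frac{2q_0\cdots q_n}{b-a}\tilde K^{l}$ lies in $P_{\sigma_i}$ iff $\tilde K^{l}\geq 0$ for all $l\neq i$ and $\sum_{l\neq i}q_l\tilde K^{l}\leq b-a$. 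The key input is the explicit description of $M$ inside $M_{\sigma_i}$: using $\sum_{l=0}^{n}q_lb_l=0$ to write $\langle m,b_i\rangle=-\frac{1}{q_i}\sum_{l\neq i}q_l\langle m,b_l\rangle$, one sees that $(\mu_l)_{l\neq i}\in\mathbb{Z}^n$ lies in $M\subset M_{\sigma_i}$ (an index-$q_i$ sublattice) if and only if $\sum_{l\neq i}q_l\mu_l\equiv 0\pmod{q_i}$, and that $\sum_{l\neq i}q_l(K_{ab}^{(i)})^{l}\equiv b-a\pmod{q_i}$ because $K_a,K_b$ satisfy $\sum_j q_ja_j=a$ and $\sum_j q_jb_j=b$. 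Combining, the admissible $\tilde K$ are precisely those with $\tilde K^{l}\geq 0$, $\sum_{l\neq i}q_l\tilde K^{l}\leq b-a$, and $\sum_{l\neq i}q_l\tilde K^{l}\equiv b-a\pmod{q_i}$.

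Finally I would produce the bijection. Given such a $\tilde K$, set $k_l=\tilde K^{l}$ for $l\neq i$ and $k_i=\frac{1}{q_i}\bigl((b-a)-\sum_{l\neq i}q_l\tilde K^{l}\bigr)$; the congruence forces $k_i\in\mathbb{Z}$ and the inequality forces $k_i\geq 0$, so $(k_0,\ldots,k_n)\in\mathbb{Z}_{\geq 0}^{n+1}$ with $\sum_j q_jk_j=b-a$, and conversely every such tuple arises uniquely in this way (truncating to the coordinates $l\neq i$). Reading off $x^{il}=\frac{2q_0\cdots q_n}{b-a}k_l$ yields exactly the coordinates asserted in the statement, equivalently $v_{ab;K_{ab}}=\sum_{l\neq i}\frac{q_lk_l}{|b-a|}v^{il}$ in terms of the vertices $v^{il}$.

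The step I expect to be the main obstacle is the lattice bookkeeping of the second paragraph: the nontrivial content is that the \emph{dropped} slack coordinate $k_i$ automatically comes out as a nonnegative integer, and this hinges precisely on the weighted relation $\sum_l q_lb_l=0$, which pins down $M$ as the congruence sublattice of $M_{\sigma_i}$. The affine-isomorphism argument for discreteness and the polytope inequalities surrounding it are routine, and consistency of the parametrization across the charts $\sigma_i$ follows because the index set $\{(k_0,\ldots,k_n)\in\mathbb{Z}_{\geq 0}^{n+1}\mid\sum_j q_jk_j=b-a\}$ is intrinsic and each tuple determines the same geometric point of $P$ in every chart.
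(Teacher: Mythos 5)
Your proof is correct and takes essentially the same route as the paper, which (without writing a formal proof) reduces the lemma to solving the chartwise linear equation $2\pi\tfrac{b-a}{2q_0\cdots q_n}x^{(i)}-2\pi(K_b^{(i)}-K_a^{(i)})=0$ over all lifts and matching the solutions lying in $P_{\sigma_i}$ with the nonnegative tuples $K_{ab}\in\mathbb{Z}_{\geq 0}^{n+1}$ of weighted sum $|b-a|$. The lattice bookkeeping you highlight --- that $M\subset M_{\sigma_i}$ is the congruence sublattice $\sum_{l\neq i}q_l\mu_l\equiv 0 \pmod{q_i}$, forcing the slack coordinate $k_i$ to be a nonnegative integer --- is exactly the content the paper leaves implicit by parametrizing lifts by integer vectors $K\in\mathbb{Z}^{n+1}$ via the divisor sequence from the outset, so your write-up is a faithful (and more detailed) rendering of the intended argument.
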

We write $V_{ab;K_{ab}}=\{v_{ab;K_{ab}}\}$.
We next discuss when these connected components form generators of $Mo(P)(L_a,L_b)$. 
\begin{lem}
    Let $a<b$ (resp.$\ a=b$). Then, each connected component $V_{ab;K_{ab}}$ (resp.$\ P$) forms a generator of $Mo(P)(L_a,L_b)$ (resp.$\ Mo(P)(L_a,L_a)$) of degree zero.
\end{lem}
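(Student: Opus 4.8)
The plan is to make the governing gradient field completely explicit, read off its linearization at the unique intersection point, and deduce $\dim S_v$ from the sign of a single scalar. First I would recall that, for the ordered pair $(L_a,L_b)$, the flow defining the stable manifolds is that of $-\mathrm{grad}(f_{a;K_a}-f_{b;K_b})$, which on the chart $B_{\sigma_i}$ equals $-\sum_{l\neq i}\bigl(s_{a;K_a}^{(i)l}-s_{b;K_b}^{(i)l}\bigr)\frac{\partial}{\partial x^{(i)l}}$ by the identity established in the construction of $Mo(P)$. Substituting the explicit lifts \eqref{laglangian-section-lift} gives
\begin{align*}
s_{a;K_a}^{(i)l}-s_{b;K_b}^{(i)l}=2\pi\frac{a-b}{2q_0\cdots q_n}\,x^{(i)l}-2\pi(a_l-b_l),
\end{align*}
so the field is \emph{exactly affine} in the coordinates $x^{(i)}$, with its only zero at the point $v_{ab;K_{ab}}$ of Lemma \ref{lemintersection-MH} and with constant derivative $-c\,\mathrm{Id}$, where $c:=2\pi\frac{a-b}{2q_0\cdots q_n}$. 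In particular, for $a\neq b$ the zero is non-degenerate, and writing $u=x^{(i)}-x^{(i)}(v_{ab;K_{ab}})$ the flow is the exact linear flow $u(t)=e^{-ct}u(0)$.

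For $a<b$ we have $c<0$, so $e^{-ct}\to\infty$ as $t\to+\infty$ in every direction; hence $v_{ab;K_{ab}}$ is a source and its stable manifold is the single point $S_{v_{ab;K_{ab}}}=\{v_{ab;K_{ab}}\}$. This gives at once $|V_{ab;K_{ab}}|=\dim S_{v_{ab;K_{ab}}}=0$, the asserted degree. The generator condition is then automatic: by Lemma \ref{lemintersection-MH} we have $v_{ab;K_{ab}}\in P$, so $S_{v_{ab;K_{ab}}}\cap P=\{v_{ab;K_{ab}}\}$, and a one-point set is open in itself, whence $v_{ab;K_{ab}}$ lies in the interior of $S_{v_{ab;K_{ab}}}\cap P$ viewed inside $S_{v_{ab;K_{ab}}}$. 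I would stress that this is insensitive to whether $v_{ab;K_{ab}}$ is an interior or a boundary point of $P$, because the stable manifold of a source is always the critical point alone; this is precisely why \emph{every} connected component qualifies as a generator.

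For $a=b$ the two sections coincide, so $f_{a;K_a}-f_{b;K_b}\equiv 0$ and the field vanishes identically; its time-$t$ map is the identity for all $t$, hence $S_v=\{v\}$ for every $v\in P$. Thus $\dim S_v=0$, giving degree zero, and for any interior point $v$ of $P$ the same one-point argument shows $v\in\mathrm{int}(S_v\cap P)$, so the whole polytope $P$ is a degree-zero generator of $Mo(P)(L_a,L_a)$, to be read as the identity morphism.

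The conceptual content is thus entirely carried by the sign of $c$; the only care required is bookkeeping, which I expect to be the main, though mild, obstacle. I would need to check that the affine normal form computed in the chart $x^{(i)}$ persists on a neighborhood of $v_{ab;K_{ab}}$ inside the enlarged base $\tilde B$ of the clean-intersection convention, and that the source property does not depend on the chosen chart. Both are harmless: the dual coordinates $x^{(i)}$ extend over all of $M_{\mathbb{R}}\supset\tilde B$, and the transition maps of the affine structure on $N_{\mathbb{R}}$ are affine, so the derivative of the field transforms by conjugation and the signs of its eigenvalues — and therefore $\dim S_v$ — are chart-independent. The only mildly delicate case is the degenerate field at $a=b$, which I would dispatch exactly as above by observing that a vanishing field has the identity flow; I do not anticipate any genuine analytic difficulty.
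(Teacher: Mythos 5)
Your proof is correct and follows essentially the same route as the paper: write the gradient field explicitly in the dual coordinates, observe that for $a<b$ it is affine with linearization a positive multiple of the identity so that $v_{ab;K_{ab}}$ is a source and $S_{v_{ab;K_{ab}}}=\{v_{ab;K_{ab}}\}$, and for $a=b$ note the field vanishes so every stable manifold is a point; the paper's proof is just a terser version of this, and your extra remarks (chart independence, the one-point interior argument making the generator condition automatic even at boundary points) only make explicit what the paper leaves implicit.
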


\begin{proof}
    The gradient vector field associated to $V_{ab;K_{ab}}$ is of the form
    \begin{align}\label{gradvectfield-stablemfd}
        2\pi\left(\frac{b-a}{2q_0\cdots q_n}x^{i0}-k_0\right)\frac{\partial }{\partial x^{i0}}+\cdots+2\pi\left(\frac{b-a}{2q_0\cdots q_n}x^{in}-k_n \right)\frac{\partial}{\partial x^{in}}.
    \end{align}
    {F}or $a<b$, we see that the corresponding stable manifold $S_{v_{ab;K_{ab}}}$ is $V_{ab;K_{ab}}=\{v_{ab;K_{ab}}\}$ itself. Therefore, $V_{ab;K_{ab}}$ is a generator of degree zero. For $a=b$, the gradient vector field associated to $P$ is $0$. Then, for any point $p\in P$, the corresponding stable manifold is $\{p\}$ itself, and $P$ forms a generator of degree zero.
\end{proof}

 If $a>b$, then the corresponding gradient vector field is the opposite sign of (\ref{gradvectfield-stablemfd}), which implies $S_{v_{ab;K_{ab}}}\cap P=P$. Therefore, each $V_{ab;K_{ab}},\ a>b$, does not form a generator unless $v_{ab;K_{ab}}$ belongs to the interior of $P\subset \mathbb{R}^n$.  
Now, for any positive integer $R\in\mathbb{Z}_{>0}$, let 
\[\mathcal{E}_{R}:=(L_q,\ldots,L_{q+R})\] 
be an (ordered) finite set of Lagrangian sections for any fixed integer $q$. Then, the following proposition allows us to determine whether $V_{ab;K_ab},\ a>b$, forms a generator. 

\begin{prop}
    Every connected component $V_{ab;K_{ab}}\subset P$ of $\pi(L_a\cap L_b)$ for any $L_a,L_b\in\mathcal{E}_R$ with $L_a\neq L_b$ belongs to the boundary $\partial P$ if and only if $R\leq \sum_{j=0}^{n}q_j-1$.
\end{prop}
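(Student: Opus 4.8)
The plan is to translate the geometric condition ``$V_{ab;K_{ab}}\subset\partial P$'' into a purely combinatorial condition on the tuple $K_{ab}=(k_0,\ldots,k_n)$, and then run an elementary extremal argument over all admissible tuples. First I would fix a maximal cone $\sigma_i$ and use the explicit facet description of the polytope in that chart, namely
\[
P_{\sigma_i}=\Big\{x\ \Big|\ x^{il}\geq 0\ (l\neq i),\ \textstyle\sum_{l\neq i}q_lx^{il}\leq 2q_0\cdots q_n\Big\},
\]
whose facets are the $n$ hyperplanes $\{x^{il}=0\}$ together with $\{\sum_{l\neq i}q_lx^{il}=2q_0\cdots q_n\}$. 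Substituting the coordinates of $v_{ab;K_{ab}}$ from Lemma \ref{lemintersection-MH}, for which $x^{il}(v_{ab;K_{ab}})=\frac{2q_0\cdots q_n\,k_l}{|b-a|}$, I would check that the point lies on $\{x^{il}=0\}$ exactly when $k_l=0$, and, using the relation $\sum_{l\neq i}q_lk_l=|b-a|-q_ik_i$, that it lies on $\{\sum_{l\neq i}q_lx^{il}=2q_0\cdots q_n\}$ exactly when $k_i=0$. Since these account for all $n+1$ facets of the simplex $P$ and the resulting conclusion is symmetric in the indices, the upshot is the chart-independent criterion
\[
v_{ab;K_{ab}}\in\partial P\iff k_j=0\ \text{for some}\ j\in\{0,\ldots,n\}.
\]

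With this criterion in hand the statement becomes a counting problem. As $L_a,L_b$ range over $\mathcal{E}_R=(L_q,\ldots,L_{q+R})$ with $a\neq b$, the difference $d:=|b-a|$ ranges over exactly $\{1,\ldots,R\}$, and by Lemma \ref{lemintersection-MH} the connected components of $\pi(L_a\cap L_b)$ are indexed by the tuples $K_{ab}\in\mathbb{Z}_{\geq0}^{n+1}$ with $\sum_{j}q_jk_j=d$. Hence ``every connected component lies in $\partial P$'' is equivalent to the assertion that there is no $K=(k_0,\ldots,k_n)\in\mathbb{Z}_{\geq0}^{n+1}$ with all $k_j\geq 1$ and $\sum_jq_jk_j\leq R$. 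Because each $q_j$ is a positive integer, the minimum of $\sum_jq_jk_j$ over all $K$ with every $k_j\geq 1$ is attained at $K=(1,\ldots,1)$ and equals $\sum_{j=0}^nq_j$, so such a $K$ exists precisely when $R\geq\sum_{j=0}^nq_j$. Therefore every connected component lies in $\partial P$ if and only if $R<\sum_{j=0}^nq_j$, i.e.\ if and only if $R\leq\sum_{j=0}^nq_j-1$, as claimed. For the forward obstruction one simply exhibits the interior point produced by $a=q$, $b=q+\sum_jq_j$ and $K=(1,\ldots,1)$.

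I do not anticipate a serious obstacle; the only point requiring care is confirming that the facet computation, carried out in a single chart $\sigma_i$, really detects membership in $\partial P$ rather than merely in $\partial P_{\sigma_i}$. This is safe because the identification $(M_{\sigma_i})_{\mathbb{R}}\cong M_{\mathbb{R}}$ used in Section 4 is an affine isomorphism carrying $P$ onto $P_{\sigma_i}$, so it preserves both boundary and interior; once the criterion ``some $k_j=0$'' is established and seen to be independent of the chart used, the remainder is the elementary minimization above.
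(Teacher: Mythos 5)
Your proof is correct and follows essentially the same route as the paper: both reduce the statement to the criterion that $v_{ab;K_{ab}}$ is interior to $P$ exactly when every $k_j>0$ (you via the facet inequalities of $P_{\sigma_i}$, the paper via the barycentric description $\sum_{j\neq i}t_jv^{ij}$), then observe $|b-a|=\sum_j q_jk_j\geq\sum_j q_j$ for such tuples, and settle the converse with the same witness $K=(1,\ldots,1)$ at $|b-a|=\sum_{j=0}^n q_j$. The extra care you take about chart-independence is sound but not needed beyond what the paper already implicitly uses, since the identification $(M_{\sigma_i})_{\mathbb{R}}\simeq M_{\mathbb{R}}$ is affine.
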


\begin{proof}
    For $L_a,L_b\in\mathcal{E}_R,L_a\neq L_b$, suppose that there exists a connected component $V_{ab;K_{ab}}$ of $\pi(L_a\cap L_b)$ such that $v_{ab;K_{ab}}=\sum_{j\neq i}\frac{q_{j}k_j}{|b-a|}v^{ij}$ is in the interior of $P$. Since $\mathrm{Int}(P_{\sigma_i})=\{\sum_{j\neq i}t_jv^{ij}\mid t_j>0\ (j\neq i),\sum_{j\neq i}t_j<1\}$, we see that  $k_j>0$ for all $j=0,\ldots,n$. 
    Then $|b-a|$, which is equal to $\sum_{j=0}^{n}q_jk_j$, must be greater than or equal to $\sum_{j=0}^{n}q_j$.  
    Conversely, let $R> \sum_{j=0}^{n}q_j-1$. Then, for $L_a,L_b\in \mathcal{E}_R$ with $|b-a|=\sum_{j=0}^{n}q_j$, there is a connected component $V_{ab;(1,\ldots,1)}$ of $\pi(L_a\cap L_b)$ consisting of a point $v_{ab;(1,\ldots,1)}=\sum_{j\neq i}\frac{q_{j}}{|b-a|}v^{ij}\in \text{Int}(P_{\sigma_i})$.
\end{proof}

The above proposition implies the following.

\begin{cor}
    The ordered set $\mathcal{E}_{R}$ forms a full strongly exceptional collection in $Tr(Mo_{\mathcal{E}_{R}}(P))$ if and only if $R\leq \sum_{j=0}^nq_j-1$.
\end{cor}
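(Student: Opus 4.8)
The plan is to unwind the definition of a full strongly exceptional collection and match each requirement against the facts already established, so that the whole statement collapses onto the preceding Proposition. Recall that an ordered collection $(E_0,\ldots,E_R)$ in a triangulated category is \emph{strongly exceptional} when each $E_i$ is exceptional, $\mathrm{Hom}^{\bullet}(E_i,E_j)=0$ for $i>j$, and $\mathrm{Hom}^{k}(E_i,E_j)=0$ for $k\neq 0$ whenever $i<j$; it is \emph{full} when it generates the category. Here $E_i=L_{q+i}$, and since $Mo_{\mathcal{E}_R}(P)$ is minimal, I would read off the morphism spaces in $Tr(Mo_{\mathcal{E}_R}(P))$ between these objects directly from the generators $V_{ab;K_{ab}}$, with no separate cohomology computation needed.

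First I would dispose of the conditions that hold for every $R$. Fullness is automatic: by the Bondal--Kapranov--Kontsevich construction the objects of $Mo_{\mathcal{E}_R}(P)$, namely $\mathcal{E}_R$ itself, generate $Tr(Mo_{\mathcal{E}_R}(P))$. The strong requirement is equally free, since every generator $V_{ab;K_{ab}}$ that occurs has degree zero by the previous Lemmas; hence all morphism spaces are concentrated in degree $0$ and no positive-degree $\mathrm{Ext}$ can survive. Each $L_a$ is exceptional for the same reason: in the case $a=b$ the only degree-zero generator is $P=V_{aa;(0,0,0)}$, giving $\mathrm{Hom}(L_a,L_a)=\mathbb{C}$.

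It then remains to control the backward Hom spaces, which is where $R$ enters. For $a>b$ the vanishing $\mathrm{Hom}^{\bullet}(L_a,L_b)=0$ says precisely that no connected component $V_{ab;K_{ab}}$ forms a generator; by the discussion preceding $\mathcal{E}_R$, such a component forms a generator exactly when $v_{ab;K_{ab}}\in\mathrm{Int}(P)$. For the forward implication, if $R\leq\sum_{j=0}^{n}q_j-1$ I would invoke the Proposition to place every component on $\partial P$, so that all backward Homs vanish and $\mathcal{E}_R$ is exceptional. For the converse I would argue contrapositively: if $R>\sum_{j=0}^{n}q_j-1$, the Proposition produces an interior component $V_{ab;(1,\ldots,1)}$ with $|b-a|=\sum_{j=0}^{n}q_j$, and since $\sum_{j=0}^{n}q_j\leq R$ this pair can be chosen with $a>b$ inside $[q,q+R]$, yielding a nonzero backward morphism and destroying exceptionality.

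Honestly, the geometric content has already been carried out: the intersection points $v_{ab;K_{ab}}$ and the interior-versus-boundary dichotomy live in the Lemmas and the Proposition, and the degree-zero concentration lives in subsections 6.1--6.2. The only thing to be careful about is the bookkeeping---translating the ordering convention $E_i=L_{q+i}$ into the requirement that the \emph{backward} Homs ($a>b$) vanish, and recording that fullness and the higher-$\mathrm{Ext}$ vanishing are automatic rather than conditions on $R$. So I do not anticipate a genuine obstacle beyond making this translation precise.
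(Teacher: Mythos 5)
Your proposal is correct and takes essentially the same route as the paper, which states this corollary as an immediate consequence of the preceding Proposition; your argument is precisely the unwinding of that implication, using the degree-zero Lemma for $a\le b$, the interior/boundary criterion for generators with $a>b$, and the automatic fullness from the $Tr$ construction. One small wording caveat: your claim that \emph{all} morphism spaces are concentrated in degree zero is only true for the forward Homs ($a\le b$) --- for $a>b$ and $R$ large the interior components give generators of degree $n$ (which your own converse argument exploits) --- but this does not affect the proof, since the strongness condition only constrains the case $i<j$.
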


Namely, we see that $\mathcal{E}=\mathcal{E}_{\sum_{j=0}^nq_j-1}$ is the longest sequence of Lagrangian sections of the form of $\mathcal{E}_R$ such that $\mathcal{E}_{R}$ forms a full strongly exceptional collection in $Tr(Mo_{\mathcal{E}_R}(P))$. 
By the discussions above, in the full subcategory $Mo_{\mathcal{E}}(P)$, the space of morphisms is explicitly given as follows: 
\begin{align} \label{complex-mop}
    Mo^{r}_{\mathcal{E}}(P)(L_a,L_b)
    =   \begin{cases}
            \mathbb{C}\cdot P \quad \text{for} \ a=b,\ r=0,\\
          \bigoplus_{\substack{K_{ab}=(k_0,\dots,k_n)\in\mathbb{Z}_{\geq 0}^{n+1}\\ \sum_{j=0}^{n}q_jk_j=b-a}}\mathbb{C}\cdot V_{ab;K_{ab}} \quad \text{for} \ a<b,\ r=0,\\
            0 \quad \text{otherwise}.
        \end{cases}
\end{align}

\begin{ex}\label{ex of generators of morphisms of Mo(P)}
In the case of $\mathbb{P}(1,1,2)$, the generators of $Mo^{0}_{\mathcal{E}}(P)(L_a,L_b) $, $0<b-a\leq3$, are illustrated as in Figure \ref{generators of morphisms of Mo(P) for P(1,1,2)}. Here, the smaller dots are the lattice $M_{\sigma_0}=\mathbb{Z}^2$. 

\begin{figure}[htbp]
    \centering
     \begin{minipage}[htbp]{0.48\columnwidth}
     \centering
     \begin{tikzpicture}[scale=1.1]
      \foreach \x in {0,...,4}
        \foreach \y in {0,...,2}\fill[black](\x,\y)circle(0.02); 
        \draw (0,0)--(4,0)--(0,2)--cycle;
        \fill[black](0,0)circle(0.075);
            \draw(0,0)node[below]{$v_{ab;(1,0,0)}$};
        \fill[black](4,0)circle(0.075);
            \draw(4,0)node[below]{$v_{ab;(0,1,0)}$};
    \end{tikzpicture}
    \caption*{$b-a=1$}
    \end{minipage}
    \quad 
    \begin{minipage}[htbp]{0.48\columnwidth}
        \centering
        \begin{tikzpicture}[scale=1.1]
        \foreach \x in {0,...,4}
        \foreach \y in {0,...,2}\fill[black](\x,\y)circle(0.02); 
            \draw (0,0)--(4,0)--(0,2)--cycle;    
            \fill[black](0,0)circle(0.075);
                \draw(0,0)node[below]{$v_{ab;(2,0,0)}$};
            \fill[black](2,0)circle(0.075);
                \draw(2,0)node[below]{$v_{ab;(1,1,0)}$};
            \fill[black](4,0)circle(0.075);
                \draw(4,0)node[below]{$v_{ab;(0,2,0)}$};
            \fill[black](0,2)circle(0.075);
                \draw(0,2)node[right]{$v_{ab;(0,0,1)}$};
        \end{tikzpicture}
        \caption*{$b-a=2$}        
    \end{minipage}
    \quad
    \begin{minipage}[htbp]{0.48\columnwidth}
    \centering
        \begin{tikzpicture}[scale=1.1]
        \foreach \x in {0,...,4}
        \foreach \y in {0,...,2}\fill[black](\x,\y)circle(0.02); 
            \draw (0,0)--(4,0)--(0,2)--cycle;    
            \fill[black](0,0)circle(0.075);
                \draw(0,0)node[below]{$v_{ab;(3,0,0)}$};
            \fill[black](4/3,0)circle(0.075);
                \draw(4/3,0)node[below]{$v_{ab;(2,1,0)}$};
            \fill[black](8/3,0)circle(0.075);
                \draw(8/3,0)node[below]{$v_{ab;(1,2,0)}$};
            \fill[black](4,0)circle(0.075);
                \draw(4,0)node[below]{$v_{ab;(0,3,0)}$};
            \fill[black](0,4/3)circle(0.075);
                \draw(0,4/3)node[left]{$v_{ab;(1,0,1)}$};
            \fill[black](4/3,4/3)circle(0.075);
                \draw(4/3,4/3)node[right]{$v_{ab;(0,1,1)}$};
        \end{tikzpicture}
        \caption*{$b-a=3$}       
    \end{minipage}
    \caption{The generators of $Mo^{0}_{\mathcal{E}}(P)(L_a,L_b)$ for $\mathbb{P}(1,1,2)$}
    \label{generators of morphisms of Mo(P) for P(1,1,2)}
\end{figure}
\end{ex}
    
 Next, we consider the $A_{\infty}$ structure of $Mo_{\mathcal{E}}(P)(L_a,L_b)$. 
 For degree reasons, we immediately see that 
 $\mathfrak{m}_1=0$ and $\mathfrak{m}_k=0,k\geq3$.
 Hence, $Mo_{\mathcal{E}}(P)$ is minimal and forms a DG category.
It remains to calculate $\mathfrak{m}_2$.   
Let $a< b < c$.  Let $V_{ab;K_{ab}}\in Mo_{\mathcal{E}}(P)(L_a,L_b)$ and let $V_{bc;K_{bc}}\in Mo_{\mathcal{E}}(P)(L_b,L_c)$. 
We can take potential functions $f_{a;K_a},f_{b;K_b},f_{c;K_c}$ for lifted Lagrangian sections $s_{a;K_a}, s_{b;K_b},s_{c;K_c}$ 
such that $-(s_{a;K_a}-s_{b;K_b})(v_{ab;K_{ab}})=0$ and $-(s_{b;K_b}-s_{c;K_c})(v_{bc;K_{bc}})=0$, where $K_b-K_a=K_{ab}$ and $K_{c}-K_b=K_{bc}$. Here, we recall potential means that $\sum_{k\neq i}s_{a;K_a}^{ik}d\check{x}_{ik}=df_{a;K_a}^{(i)}$. 
Let us associate to $V_{ab;K_{ab}}$ a potential function $f_{ab;K_{ab}}$ on $P$ which is uniquely defined by 
\begin{align}\label{fab}
    d(f_{b;K_b}-f_{a;K_a})=df_{ab;K_{ab}},\quad f_{ab;K_{ab}}(v_{ab;K_{ab}})=0.
    \end{align}
Similarly, we associate to $V_{bc;K_{bc}}$ a potential function $f_{bc;K_{bc}}$.

Now, consider the intersection of graphs of $s_{a;K_a}$ and $s_{c;K_c}$, and we have \[V_{ac;K_{ab}+K_{bc}}=\{v_{ac;K_{ab}+K_{bc}}\}\in Mo_{\mathcal{E}}(P)(L_a,L_c).\]
 This $v_{ac;K_{ab}+K_{bc}}$ is a point on $\partial P$ dividing the line segment connecting $v_{ab;K_{ab}}$ and $v_{bc;K_{bc}}$ in the ratio $c-b:b-a$.
Then, consider the set of gradient trees $\mathcal{GT}(v_{ab;K_{ab}},v_{bc;K_{bc}};v_{ac;K_{ab}+K_{bc}})$. 
Note that we now have \[U_{v_{ab;K_{ab}}}\cap U_{v_{bc;K_{bc}}}\cap S_{v_{ac;K_{ab}+K_{bc}}}=\{v_{ac;K_{ab}+K_{bc}}\},\] where $U_v$ denotes the corresponding unstable manifold. 
Therefore, there exists the unique gradient tree $\gamma$ up to smooth homotopy such that
\begin{itemize}
    \item the gradient trajectory of $-\mathrm{grad}(f_{a;K_a}-f_{b;K_b})=\mathrm{grad}f_{ab;K_{ab}}$ starting from $v_{ab;K_{ab}}$ goes straight,
    \item the gradient trajectory of $-\mathrm{grad}(f_{b;K_b}-f_{c;K_c})=\mathrm{grad}f_{bc;K_{bc}}$ starting from $v_{bc;K_{bc}}$ goes straight,
    \item the gradient trajectory of $-\mathrm{grad}(f_{a;K_a}-f_{c;K_c})=\mathrm{grad}(f_{ab;K_{ab}}+f_{bc;K_{bc}})$ ending at $v_{ac;K_{ab}+K_{bc}}$ stays at $v_{ac;K_{ab}+K_{bc}}$.
\end{itemize}
This means that they should meet at $v_{ac;K_{ab}+K_{bc}}$.

\begin{ex}
We continue with Example \ref{ex of generators of morphisms of Mo(P)}.
For generators $v_{ab;(0,1,0)}\ (b-a=1)$ and  $v_{bc;(0,0,1)}\ (c-b=2)$, the following Figure \ref{figure of a gradient tree} shows  the gradient tree $\gamma\in \mathcal{GT}(v_{ab;(0,1,0)},v_{bc;(0,0,1)};v_{ac;(0,1,0)+(0,0,1)})$.
\begin{figure}[htbp]
    \centering
    \begin{tikzpicture}[scale=1.5]
        \draw (0,0)--(4,0)--(0,2)--cycle;
         \fill[black](4,0)circle(0.075);
            \draw(4,0)node[below left]{$v_{ab;(0,1,0)}$};   
        \fill[black](0,2)circle(0.075);
                \draw(0,2)node[below left]{$v_{bc;(0,0,1)}$};
         \fill[black](4/3,4/3)circle(0.075);
                \draw(4/3,4/3)node[below left]{$v_{ac;(0,1,1)}$};
        \draw [very thick](4,0)--(0,2);
        \draw[<-](4/3+0.1,4/3)to[out=30,in=150](4/3+0.5,4/3)node[right]{$\text{grad}(f_{ab;(0,1,0)}+f_{bc;(0,0,1)})$};
         \begin{feynhand}
            \vertex (l1) at(4,0); \vertex (r) at (4/3,4/3); \vertex (l2) at (0,2);
            \propag[fer] (l1) to [edge label'=$\text{grad}f_{ab;(0,1,0)}$](r);
            \propag[fer] (l2) to [edge label=$\text{grad}f_{bc;(0,0,1)}$](r);
         \end{feynhand}

    \end{tikzpicture}
    \caption{$\text{The image } \gamma(T)\subset P$, \\ where $\gamma \in\mathcal{GT}(v_{ab;(0,1,0)},v_{bc;(0,0,1)};v_{ac;(0,1,0)+(0,0,1)})$}
    \label{figure of a gradient tree}
\end{figure}
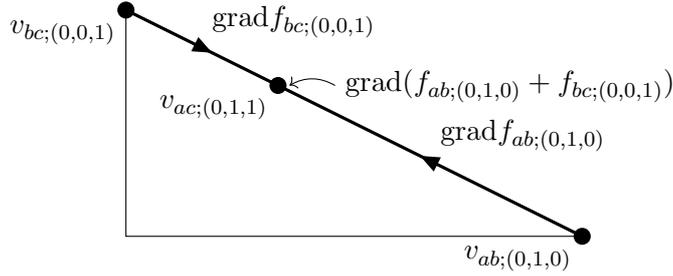
    
\end{ex}

Now the symplectic area $A(\gamma)$ turns out to be 
\[A(\gamma)=f_{ab;K_{ab}}(v_{ac;K_{ab}+K_{bc}})+f_{bc;K_{bc}}(v_{ac;K_{ab}+K_{bc}}).\]
Here, $f_{ab;K_{ab}}(v_{ac;K_{ab}+K_{bc}})$ is the symplectic area of the triangle disk enclosed by $s_{a;K_a}(\gamma(T))$, $s_{b;K_{b}}(\gamma (T))$ and $\pi^{-1}(v_{ac;K_{ab}+K_{bc}})$. Similarly, $f_{bc;K_{bc}}(v_{ac;K_{ab}+K_{bc}})$ is the symplectic area enclosed by $s_{c;K_c}(\gamma(T))$, $s_{b;K_{b}}(\gamma (T))$ and $\pi^{-1}(v_{ac;K_{ab}+K_{bc}})$. Thus, for $a<b<c$, we obtain 
\begin{align}\label{composition}
    \mathfrak{m}_{2}(V_{ab;K_{ab}},V_{bc;K_{bc}})=e^{-(f_{ab;K_{ab}}(v_{ac;K_{ab}+K_{bc}})+f_{bc;K_{bc}}(v_{ac;K_{ab}+K_{bc}}))}V_{ac;K_{ab}+K_{bc}}.
\end{align}

It remains to consider the cases $a=b<c$, $a<b=c$ or $a=b=c$. If $a=b$ (resp.$\ b=c$), then $V_{ab;K_{ab}}=P$ (resp.$\ V_{bc;K_{bc}}=P$). 
Since the gradient vector field associated to $P$ equals to zero, we see that the image $\gamma(T)$ shrinks to a point in all cases above. 
Thus, we see that $P$ forms the identity morphism with respect to $\mathfrak{m}_2$. 

As a byproduct of discussions above, we obtain the following.

\begin{prop}
    The image $\gamma(T)$ of any $\gamma\in \mathcal{GT}(V_{ab;K_{ab}},V_{bc;K_{bc}};V_{ac;K_{ac}})$ is always contained in the boundary $\partial P$ unless $a=b=c$. 
\end{prop}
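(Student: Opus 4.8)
The plan is to exploit the explicit description of the gradient tree established just above: for $a<b<c$ the image $\gamma(T)$ is exactly the straight segment joining $v_{ab;K_{ab}}$ and $v_{bc;K_{bc}}$, with the internal (= root) vertex sitting at $v_{ac;K_{ab}+K_{bc}}$, which divides this segment in the ratio $c-b:b-a$. The first step I would take is to record uniform barycentric coordinates on the simplex $P$. Writing $P_{\sigma_i}$ as the convex hull of the origin and the vertices $v^{il}$ ($l\neq i$), a short computation from Lemma \ref{lemintersection-MH} shows that the barycentric coordinate of $v_{ab;K_{ab}}$ with respect to the $j$-th vertex equals $\lambda_j=\frac{q_jk_j}{b-a}$ for every $j=0,\dots,n$, where $K_{ab}=(k_0,\dots,k_n)$ (the origin corresponds to the $i$-th vertex, with weight $\frac{q_ik_i}{b-a}=1-\sum_{l\neq i}\frac{q_lk_l}{b-a}$). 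Consequently $v_{ab;K_{ab}}$ lies on the facet $F_j$ opposite the $j$-th vertex precisely when $k_j=0$, and the analogous statement holds for $v_{bc;K_{bc}}$ and $v_{ac;K_{ac}}$.

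The key step is then a shared-facet argument. The two endpoints $v_{ab;K_{ab}}$ and $v_{bc;K_{bc}}$ lie on a common facet $F_{j_0}$ if and only if the $j_0$-th components of $K_{ab}$ and $K_{bc}$ both vanish, that is, if and only if the $j_0$-th component of $K_{ac}=K_{ab}+K_{bc}$ vanishes, since nonnegativity makes a sum of two nonnegative integers zero exactly when both summands are. Because we work inside $Mo_{\mathcal{E}}(P)$ with $\mathcal{E}=\mathcal{E}_{\sum_{j=0}^n q_j-1}$, the labels obey $c-a\leq\sum_{j=0}^n q_j-1<\sum_{j=0}^n q_j$, while $\sum_{j=0}^n q_jk_j^{(ac)}=c-a$. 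If every component $k_j^{(ac)}$ were positive the left-hand side would be at least $\sum_{j=0}^n q_j$, a contradiction; hence some $k_{j_0}^{(ac)}=0$. Thus $v_{ab;K_{ab}}$ and $v_{bc;K_{bc}}$ share the facet $F_{j_0}$, and since $F_{j_0}$ is cut out by a single affine equation it is convex, so the entire segment $\gamma(T)=[v_{ab;K_{ab}},v_{bc;K_{bc}}]$ is contained in $F_{j_0}\subset\partial P$.

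It then remains to dispose of the partially degenerate labels. When exactly one coincidence occurs, say $a=b<c$ (the case $a<b=c$ being symmetric), the generator $V_{ab;K_{ab}}=P$ is the identity and its associated gradient vector field vanishes identically, so by the computation preceding \eqref{composition} the tree collapses and $\gamma(T)$ reduces to the single point $v_{bc;K_{bc}}=v_{ac;K_{ac}}$; as $c-b=c-a<\sum_{j=0}^n q_j$, the facet criterion above places this point on $\partial P$. Only when $a=b=c$ are all three generators equal to $P$ with identically zero flow, so that a constant tree sitting at an arbitrary point of $P$—possibly interior—is admissible; this is precisely the stated exception.

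I expect the one genuinely delicate point to be the shared-facet argument: a chord between two boundary points of a simplex need not lie in $\partial P$, and it is exactly the degree bound $c-a<\sum_{j=0}^n q_j$, which encodes the length of the exceptional collection $\mathcal{E}$, that forces a common vanishing barycentric coordinate and hence a common face. Everything else is bookkeeping, since the collinearity of the three vertices and the location of the internal vertex have already been established, and the barycentric reformulation reduces facet membership to the elementary observation that $\frac{q_jk_j}{b-a}$ vanishes if and only if $k_j=0$.
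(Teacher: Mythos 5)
Your proof is correct, and it is essentially a rigorous expansion of what the paper leaves implicit: the proposition appears there with no proof at all, only the remark that it is ``a byproduct of discussions above,'' meaning (i) the description of $\gamma(T)$ as the straight segment $[v_{ab;K_{ab}},v_{bc;K_{bc}}]$ passing through $v_{ac;K_{ac}}$, (ii) the earlier proposition that all intersection points for distinct pairs in $\mathcal{E}=\mathcal{E}_{\sum_j q_j-1}$ lie on $\partial P$, and (iii) the collapse of the tree to a single point in the degenerate cases $a=b<c$, $a<b=c$, $a=b=c$. What you add---and what the paper genuinely glosses over---is the passage from ``the three marked points lie on $\partial P$'' to ``the whole segment lies on $\partial P$'': as you yourself note, a chord between two boundary points of a simplex can cross the interior, so some extra argument is required. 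Your shared-facet argument (the bound $c-a\leq \sum_j q_j-1$ forces some $k^{(ac)}_{j_0}=0$, and non-negativity of the components then forces the $j_0$-th components of both $K_{ab}$ and $K_{bc}$ to vanish, putting both endpoints on the convex facet $F_{j_0}$) closes this step cleanly, and in fact sharpens the paper's earlier boundary proposition by exhibiting a single facet containing the entire tree. An alternative one-line closing, perhaps closer to what the author had in mind, is pure convexity: $v_{ac;K_{ac}}$ lies in the relative interior of the segment (it divides it in the ratio $c-b:b-a$) and lies on $\partial P$, so any supporting hyperplane of $P$ at $v_{ac;K_{ac}}$ must contain both endpoints, hence the segment lies in a proper face of $P$. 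Your treatment of the degenerate cases, including why $a=b=c$ is the genuine exception, matches the paper exactly.
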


\subsection{Construction of the DG-equivalence in the main theorem}        
In this subsection, we show our main theorem by constructing the DG-equivalence explicitly. 
We mention that the basic strategy that is originally proposed in \cite{futaki2021homological} in turn works well for our cases. 
Recall that a DG-equivalence is a DG functor which induces a category equivalence on the corresponding cohomology categories. 
\begin{lem}\label{quasi-isom}
   There exists a quasi-isomorphism $\iota$ of cochain complexes
    \[\iota:Mo_{\mathcal{E}}(P)(L_a,L_b)\rightarrow \mathcal{V}_{\mathcal{E}}^{'}(\mathcal{O}(a),\mathcal{O}(b))\]
    satisfying the two properties of Theorem \ref{maintheorem}.
\end{lem}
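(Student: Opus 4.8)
The plan is to define $\iota$ on generators using the common combinatorial indexing of the two complexes, and then to read off the two geometric properties from the explicit modulus of $\psi_{ab;K_{ab}}$. By \eqref{complex-mop} the complex $Mo_{\mathcal{E}}(P)(L_a,L_b)$ is concentrated in degree $0$ and, for $a<b$, has the basis $\{V_{ab;K_{ab}}\}$ indexed by the tuples $K_{ab}=(k_0,\dots,k_n)\in\mathbb{Z}_{\geq0}^{n+1}$ with $\sum_{j}q_jk_j=b-a$, while for $a=b$ its only generator is the identity $P$. On the other side, the classes $[\psi_{ab;K_{ab}}]$ of \eqref{generators psi} form a basis of $H^0(\mathcal{V}'_{\mathcal{E}}(\mathcal{O}(a),\mathcal{O}(b)))$ indexed by \emph{exactly} the same tuples (subsection 6.2), and the higher cohomology vanishes. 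I would therefore set $\iota(P)=1$ in the case $a=b$ and, for $a<b$, declare $\iota(V_{ab;K_{ab}})=c_{ab;K_{ab}}\,\psi_{ab;K_{ab}}$ for a positive constant $c_{ab;K_{ab}}$ to be fixed below, extending $\mathbb{C}$-linearly.

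Since $Mo_{\mathcal{E}}(P)$ has zero differential and each $\psi_{ab;K_{ab}}$ is a $d_{ab}$-cocycle (being the restriction to $\check{Y}$ of the holomorphic section $\tilde{\psi}_{ab;K_{ab}}$, hence $\bar{\partial}$-closed), the map $\iota$ automatically commutes with the differentials and is a morphism of cochain complexes. It is a quasi-isomorphism because it is concentrated in degree $0$, where it sends a basis to a nonzero scalar multiple of a basis and thus induces an isomorphism on $H^0$, while all other graded pieces vanish on both sides.

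It remains to arrange the two properties of Theorem \ref{maintheorem}, and this is where the real computation sits. Continuity of $\iota(V_{ab;K_{ab}})$ on $B$ together with its continuous extension to $P$ was already observed below \eqref{generators psi}. For the modulus I would introduce the barycentric coordinates $t_i=(2q_0\cdots q_n-\sum_{l\neq i}q_lx^{il})/(2q_0\cdots q_n)$ and $t_l=q_lx^{il}/(2q_0\cdots q_n)$ for $l\neq i$ on the simplex $P_{\sigma_i}$, so that $t_l\geq0$, $\sum_l t_l=1$, and the phase factor $e^{\sqrt{-1}K_{ab}^{(i)}\cdot{}^{t}\check{y}_{(i)}}$ drops out to give
\[
|\psi_{ab;K_{ab}}|=\prod_{l=0}^{n} t_l^{\,q_lk_l/(2q_0\cdots q_n)}.
\]
Maximizing $\log|\psi_{ab;K_{ab}}|=\sum_l \tfrac{q_lk_l}{2q_0\cdots q_n}\log t_l$ over $\sum_l t_l=1$ by a weighted AM--GM (equivalently, a Lagrange-multiplier) argument, strict concavity of $\log$ forces a unique maximizer $t_l=q_lk_l/(b-a)$, with the coordinates having $k_l=0$ pushed to $t_l=0$. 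By Lemma \ref{lemintersection-MH} this maximizer is exactly the point $v_{ab;K_{ab}}$, so that the set where $|\psi_{ab;K_{ab}}|$ is maximal equals $V_{ab;K_{ab}}$. Choosing $c_{ab;K_{ab}}$ to be the reciprocal of this maximal value normalizes the peak to $1$, yielding both displayed equalities; the case $a=b$ is immediate since $\iota(P)=1$ has constant modulus $1$ on all of $P=V_{aa;(0,\dots,0)}$.

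The main obstacle is precisely this maximization step: one must match the analytic maximizer of $\prod_l t_l^{\alpha_l}$ with the combinatorial intersection point of Lemma \ref{lemintersection-MH} and handle the degenerate coordinates $k_l=0$ (which place $v_{ab;K_{ab}}$ on $\partial P$) carefully, so that the peak set is the single point $V_{ab;K_{ab}}$ rather than a larger boundary face. Once this is in place, well-definedness of the positive constants $c_{ab;K_{ab}}$ and the independence of $\iota$ from the chosen local chart $\tilde{U}_i$ are routine.
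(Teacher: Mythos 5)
Your proposal is correct, and the map you construct is in fact the same map as the paper's, so the two arguments differ only in how the key property is certified. The paper defines $\iota(V_{ab;K_{ab}})=e^{-\frac{1}{2\pi}f_{ab;K_{ab}}+\sqrt{-1}K_{ab}\check{y}}$, where $f_{ab;K_{ab}}$ is the potential normalized by \eqref{fab}, and then observes that this equals $c_{ab;K_{ab}}\psi_{ab;K_{ab}}$ with exactly your normalizing constant; the two properties of Theorem \ref{maintheorem} are then asserted ``by construction,'' the implicit point being that $f_{ab;K_{ab}}$ (a positive multiple of the convex function $\check{\phi}_i$ minus a linear form, for $a<b$) attains its minimum value $0$ exactly at $v_{ab;K_{ab}}$, so $|e^{-\frac{1}{2\pi}f_{ab;K_{ab}}}|$ peaks precisely on $V_{ab;K_{ab}}$. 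You instead maximize $|\psi_{ab;K_{ab}}|=\prod_{l}t_l^{q_lk_l/(2q_0\cdots q_n)}$ directly in barycentric coordinates by a weighted AM--GM/Lagrange argument and match the unique maximizer $t_l=q_lk_l/(b-a)$ against Lemma \ref{lemintersection-MH}; this is more elementary, bypasses the potentials entirely, and usefully makes explicit a detail the paper leaves implicit, namely uniqueness of the peak in the degenerate case $k_l=0$, where $v_{ab;K_{ab}}$ lies on $\partial P$. What the paper's route buys is continuity with the rest of the argument: since $\mathfrak{m}_2$ is expressed through the potentials $f_{ab;K_{ab}}$ in \eqref{composition}, defining $\iota$ by $e^{-\frac{1}{2\pi}f_{ab;K_{ab}}+\sqrt{-1}K_{ab}\check{y}}$ makes the product compatibility of Lemma \ref{compatibility-composition} nearly automatic, whereas with your definition you will need the identity $c_{ab;K_{ab}}\psi_{ab;K_{ab}}=e^{-\frac{1}{2\pi}f_{ab;K_{ab}}+\sqrt{-1}K_{ab}\check{y}}$ at that later stage anyway. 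Two minor points to tighten: handle $a>b$ explicitly (the Morse complex is zero and $H^{*}(\mathcal{V}'_{\mathcal{E}}(\mathcal{O}(a),\mathcal{O}(b)))=0$ for $a>b$ by the computation in subsection 6.1, so the zero map is a quasi-isomorphism, as the paper notes in one sentence); and your phrase ``all other graded pieces vanish on both sides'' should say that all higher \emph{cohomology} vanishes, since the chain-level graded pieces of $\mathcal{V}'_{\mathcal{E}}$ in positive degree are nonzero --- only their cohomology vanishes.
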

\begin{proof}  
    We first notice that both generators of $H^0(Mo_{\mathcal{E}}(P)(L_a,L_b))$ and $H^{0}(\mathcal{V}'_{\mathcal{E}}(\mathcal{O}(a),\mathcal{O}(b)))$ with $a\leq b$ come from $K_{ab}=(k_0,\ldots,k_n)\in \mathbb{Z}_{\geq 0}^{n+1}$ such that $\sum_{j=0}^nq_jk_j=b-a$.
    Furthermore, for $V_{ab;K_{ab}}$ and $\psi_{ab;K_{ab}}$ with $a<b$, we see that there exists a positive number $c_{ab;K_{ab}}$ such that 
    \[e^{-\frac{1}{2\pi}f_{ab;K_{ab}}+\sqrt{-1}K_{ab}\check{y}}=c_{ab;K_{ab}}\psi_{ab;K_{ab}},\]
    by comparing expressions of the potential $f_{ab;K_{ab}}$ defined as in \eqref{potential} and $\psi_{ab;K_{ab}}$ defined by \eqref{generators psi}. 
    We can take such $c_{ab;K_{ab}}$ so that 
    \[\max_{x\in P}|c_{ab;K_{ab}}\psi_{ab;K_{ab}}|=1.\]
    Let us rescale all $\psi_{ab;K_{ab}}$ and write $\mathbf{e}_{ab;K_{ab}}=c_{ab;K_{ab}}\psi_{ab;K_{ab}}$.
    We can take these $[\mathbf{e}_{ab;K_{ab}}]$'s as a base of $H^{0}(\mathcal{V}'_{\mathcal{E}}(\mathcal{O}(a),\mathcal{O}(b)))$ instead of $\psi_{ab;K_{ab}}$'s.
    Then, define the map $\iota: Mo_{\mathcal{E}}(L_a,L_b)\rightarrow \mathcal{V}'_{\mathcal{E}}(\mathcal{O}(a),\mathcal{O}(b)),a<b$, by setting
    \[\iota:V_{ab;K_{ab}}\longmapsto e^{-\frac{1}{2\pi}f_{ab;K_{ab}}+\sqrt{-1}K_{ab}\check{y}}= \mathbf{e}_{ab;K_{ab}}.\]
    This construction is valid for $a\leq b$ by taking $f_{aa;(0,\ldots,0)}=0$ and $\mathbf{e}_{aa;(0,\ldots,0)}=1$ for $a=b$. Additionally, consider $\iota$ to be the zero map for $a>b$. 
    Thus, we obtain the map $\iota: Mo_{\mathcal{E}}(L_a,L_b)\rightarrow \mathcal{V}'_{\mathcal{E}}(\mathcal{O}(a),\mathcal{O}(b))$, which turns out to be a quasi-isomorphism by construction.
    Moreover, by construction, it follows that for $\mathbf{e}_{ab;K_{ab}}=\iota(V_{ab;K_{ab}})\in \mathcal{V}'_{\mathcal{E}}(\mathcal{O}(a),\mathcal{O}(b))$
    \[\{x\in P\mid |\mathbf{e}_{ab;K_{ab}}|=1\}=V_{ab;K_{ab}}.\]
    Thus, our $\iota$ satisfies all the desired properties. 
\end{proof}

It remains to check the compatibility of product structures in order to complete Theorem \ref{maintheorem}. 

\begin{lem}\label{compatibility-composition}
    Let $\iota:Mo_{\mathcal{E}}(P)(L_a,L_b)\rightarrow \mathcal{V}'_{\mathcal{E}}(\mathcal{O}(a),\mathcal{O}(b))$ be the quasi-isomorphism constructed in the proof of Lemma \ref{quasi-isom}. 
    Then, we have
    \begin{align}\label{compaibility-composition equality}\iota(\mathfrak{m}_{2}(V_{ab;K_{ab}},V_{bc;K_{bc}}))=\mathbf{e}_{ab;K_{ab}}\cdot \mathbf{e}_{bc;K_{bc}}.
    \end{align}
\end{lem}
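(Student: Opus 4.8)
The plan is to reduce the identity \eqref{compaibility-composition equality} to an additivity property of the potential functions, exploiting that both sides are scalar multiples of the single generator $\mathbf{e}_{ac;K_{ac}}$ with $K_{ac}=K_{ab}+K_{bc}$. First I would record that in $\mathcal{V}$ the product of two degree-zero morphisms is, in the fixed local trivialization over $\check{Y}$, simply the pointwise product of the representing functions (composition of bundle homomorphisms together with the wedge product of $0$-forms contributes no sign). Inserting the explicit form $\mathbf{e}_{ab;K_{ab}}=e^{-\frac{1}{2\pi}f_{ab;K_{ab}}+\sqrt{-1}K_{ab}\check{y}}$ obtained in the proof of Lemma \ref{quasi-isom}, the right-hand side becomes
\[
\mathbf{e}_{ab;K_{ab}}\cdot\mathbf{e}_{bc;K_{bc}}
=e^{-\frac{1}{2\pi}\left(f_{ab;K_{ab}}+f_{bc;K_{bc}}\right)+\sqrt{-1}(K_{ab}+K_{bc})\check{y}} .
\]
The oscillating factor already carries the correct index, since $K_{ab}+K_{bc}=K_{ac}$, so it matches the $\check{y}$-part of $\mathbf{e}_{ac;K_{ac}}$ exactly, and only the real exponent remains to be analyzed.

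The heart of the argument is then to compare $f_{ab;K_{ab}}+f_{bc;K_{bc}}$ with $f_{ac;K_{ac}}$. I would use that these potentials are defined by \eqref{fab} through $df_{ab;K_{ab}}=d(f_{b;K_b}-f_{a;K_a})$ and $df_{bc;K_{bc}}=d(f_{c;K_c}-f_{b;K_b})$, whence
\[
d\left(f_{ab;K_{ab}}+f_{bc;K_{bc}}\right)=d(f_{c;K_c}-f_{a;K_a})=df_{ac;K_{ac}} .
\]
Thus $f_{ab;K_{ab}}+f_{bc;K_{bc}}-f_{ac;K_{ac}}$ is constant on the connected base $B$, and evaluating at $v_{ac;K_{ac}}$ together with the normalization $f_{ac;K_{ac}}(v_{ac;K_{ac}})=0$ identifies this constant with $f_{ab;K_{ab}}(v_{ac;K_{ac}})+f_{bc;K_{bc}}(v_{ac;K_{ac}})$, which is precisely the quantity appearing as the weight $A(\gamma)$ in the formula \eqref{composition} for $\mathfrak{m}_2$. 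Substituting the resulting relation $f_{ab;K_{ab}}+f_{bc;K_{bc}}=f_{ac;K_{ac}}+A(\gamma)$ into the displayed product collapses it to a scalar times $\mathbf{e}_{ac;K_{ac}}$, and this scalar is exactly the weight $e^{-A(\gamma)}$ produced by \eqref{composition}; hence the product equals $\iota\bigl(\mathfrak{m}_2(V_{ab;K_{ab}},V_{bc;K_{bc}})\bigr)$.

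I would then dispose of the degenerate orderings separately. When $a=b$ or $b=c$ one of the inputs is the object $P$, which maps under $\iota$ to $\mathbf{e}_{aa;(0,\dots,0)}=1$ and is the identity for $\mathfrak{m}_2$, so both sides of \eqref{compaibility-composition equality} reduce to the surviving factor; and whenever the ordering or the degree count forces $\mathfrak{m}_2$ (equivalently, the corresponding product in $\mathcal{V}'_{\mathcal{E}}$) to vanish, both sides are zero. Combined with Lemma \ref{quasi-isom}, this shows that $\iota$ is compatible with $\mathfrak{m}_2$, so it is a genuine DG functor; since it is already a quasi-isomorphism on morphism complexes and a bijection on objects, it is the DG-equivalence asserted in Theorem \ref{maintheorem}.

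The step I expect to be the main obstacle is matching the normalizations precisely. The map $\iota$ carries the factor $\tfrac{1}{2\pi}$ in its real exponent, whereas $A(\gamma)$ is described as a symplectic area, so the proof must confirm that the additivity constant of the potentials reproduces the weight $e^{-A(\gamma)}$ with the correct $2\pi$-scaling built consistently into the symplectic form $\sum d\check{x}\wedge dy$, the potential convention $df=\sum s\, d\check{x}$, and the definition of $\mathbf{e}_{ab;K_{ab}}$. Once this bookkeeping is pinned down, the remaining verifications are the routine facts recorded above.
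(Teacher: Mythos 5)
Your proposal is correct and follows essentially the same route as the paper's proof: both sides are written out via the explicit exponential formulas, the key observation $d(f_{ab;K_{ab}}+f_{bc;K_{bc}})=df_{ac;K_{ac}}$ shows the difference of potentials is the constant $(f_{ab;K_{ab}}+f_{bc;K_{bc}})(v_{ac;K_{ac}})$ by evaluating at $v_{ac;K_{ac}}$ with the normalization $f_{ac;K_{ac}}(v_{ac;K_{ac}})=0$, and the degenerate cases $a=b$ or $b=c$ are disposed of by the identity morphism $P\mapsto 1$. The $2\pi$-normalization you flag as the main obstacle is glossed over in the paper as well: its proof silently places the factor $\frac{1}{2\pi}$ in front of the weight $f_{ab;K_{ab}}(v_{ac;K_{ac}})+f_{bc;K_{bc}}(v_{ac;K_{ac}})$ (rather than using $e^{-A(\gamma)}$ literally as in \eqref{composition}), which is precisely the convention that makes the computation close.
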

\begin{proof}
    Let $a<b<c$.
    By \eqref{composition} the left hand side of \eqref{compaibility-composition equality} is expressed as
        \[\iota(\mathfrak{m}_{2}(V_{ab;K_{ab}},V_{bc;K_{bc}}))=e^{-\frac{1}{2\pi}\left(f_{ab;K_{ab}}(v_{ac;K_{ab}})+f_{bc;K_{bc}}(v_{ac;K_{ac}})+f_{ac;K_{ac}}\right)
        +\sqrt{-1}K_{ac}\check{y}}, \]
        where $K_{ac}:=K_{ab}+K_{bc}$, 
        and the right hand side of \eqref{compaibility-composition equality} is expressed as 
        \[  \mathbf{e}_{ab;K_{ab}}\cdot\mathbf{e}_{bc;K_{bc}}=e^{-\frac{1}{2\pi}\left(f_{ab;K_{ab}}+f_{bc;K_{bc}}\right)+\sqrt{-1}K_{ac}\check{y}}.
        \]
        Notice that the difference between $f_{ac;K_{ac}=K_{ab}+K_{bc}}$  and $f_{ab;K_{ab}}+f_{bc;K_{bc}}$ is just a constant function, since $d(f_{ab;K_{ab}}+f_{bc;K_{bc}})=df_{ac;K_{ac}}$ (cf. \eqref{fab}). 
        In particular, we see that 
        \begin{align*}
            (f_{ab;K_{ab}}+f_{bc;K_{bc}}-f_{ac;K_{ac}})(x)\equiv(f_{ab;K_{ab}}+f_{bc;K_{bc}})(v_{ac;K_{ac}}),
        \end{align*}
        and hence obtain the compatibility \eqref{compaibility-composition equality}. 
        For the remaining cases $a=b<c,\ a<b=c$ or $a=b=c$, it immediately follows since the identity morphism $P$ is sent to $1$. 
\end{proof}

\begin{proof}[Proof of Theorem \ref{maintheorem}]
    Combining Lemma \ref{quasi-isom} and Lemma \ref{compatibility-composition}, we see that  $\iota$ 
    constructed in the proof of Lemma \ref{quasi-isom} 
    extends to a DG-equivalence. This completes the proof.
\end{proof}

\bibliographystyle{alpha}
\bibliography{hmsforWPSandMorsehomotopy/hms_for_wps_Morse_homotopy}

\end{document}